\newcommand{\TryPackage}[3]{\IfFileExists{#1.sty}{\usepackage{#1}#2}{#3}
}
\newcommand{\bbi}{{{\bf i}}}
\newcommand{\bbj}{{{\bf j}}}
\newcommand{\bbk}{{{\bf k}}}
\newcommand{\ZZ}{{\mathbb Z}}
\newcommand{\RR}{{\mathbb R}}
\newcommand{\CC}{{\mathbb C}}
\newcommand{\QQ}{{\mathbb Q}}
\newcommand{\cC}{{\mathcal C}}
\newcommand{\cG}{{\mathcal G}}
\newcommand{\ccH}{{\mathcal H}}
\newcommand{\red}{{\hbox{\scriptsize \sl red}}}
\newcommand{\Hom}{\operatorname{Hom}}
\newcommand{\nat}{\natural}
\newcommand{\cs}{\operatorname{CS}}
\newcommand{\Stab}{\operatorname{Stab}}
\newcommand{\rank}{\operatorname{rank}}
\newcommand{\Real}{\operatorname{Re}}
\theoremstyle{definition}
\newtheorem{df}{Definition}[section]
\theoremstyle{plain}
\newtheorem{theorem}{Theorem}
\newtheorem{thm}[df]{Theorem}
\newtheorem{cor}[df]{Corollary}
\newtheorem{lem}[df]{Lemma}
\newtheorem{prop}[df]{Proposition}
\title[The pillowcase and perturbations of traceless representations]{The pillowcase and perturbations of traceless representations of knot groups}
\author{Matthew Hedden}\author{Chris Herald} \author{Paul Kirk}
\thanks{The first author gratefully acknowledges support from NSF grant DMS-0906258,  NSF CAREER grant DMS-1150872, and an Alfred P. Sloan Research Fellowship.  The third author gratefully acknowledges support from 
NSF  grant DMS-1007196}
\address{Department of Mathematics, Michigan State University \newline
\hspace*{.375in} East Lansing, MI 48824} 
\email{\rm{mhedden@math.msu.edu}}
\address{Department of Mathematics,   University of Nevada, Reno \newline
\hspace*{.375in} Reno, NV 89557} 
\email{\rm{herald@unr.edu}}
\address{Department of Mathematics, Indiana University \newline
\hspace*{.375in}  Bloomington, IN 47405} 
\email{\rm{pkirk@indiana.edu}}
\subjclass[2010]{Primary 57M27, 57R58, 57M25 ; Secondary 81T13} 
\keywords{pillowcase, holonomy perturbation, instanton, Floer homology, character variety, two bridge knot, torus knot}
\def\@tocline#1#2#3#4#5#6#7{\relax
  \ifnum #1>\c@tocdepth 
  \else
    \par \addpenalty\@secpenalty\addvspace{#2}%
    \begingroup \hyphenpenalty\@M
    \@ifempty{#4}{%
      \@tempdima\csname r@tocindent\number#1\endcsname\relax
    }{%
      \@tempdima#4\relax
    }%
    \parindent\z@ \leftskip#3\relax \advance\leftskip\@tempdima\relax
    \rightskip\@pnumwidth plus4em \parfillskip-\@pnumwidth
    #5\leavevmode\hskip-\@tempdima
      \ifcase #1
       \or\or \hskip 1em \or \hskip 2em \else \hskip 3em \fi%
      #6\nobreak\relax
    \dotfill\hbox to\@pnumwidth{\@tocpagenum{#7}}\par
    \nobreak
    \endgroup
  \fi}
\begin{document}

 \begin{abstract} We introduce explicit holonomy perturbations of the Chern-Simons functional on a 3-ball containing a pair of unknotted arcs.  These perturbations give us a concrete local method for making  the moduli spaces of flat singular $SO(3)$ connections  relevant to Kronheimer and Mrowka's singular instanton knot homology  non-degenerate.  The  mechanism for this study is a (Lagrangian)   intersection diagram which arises, through restriction of representations, from a tangle decomposition of a knot.  When one of the tangles is trivial, our perturbations allow us to study isolated intersections of two Lagrangians to  produce minimal generating sets for singular instanton knot homology. The  (symplectic) manifold where this intersection occurs corresponds to the traceless character variety of the four-punctured 2-sphere, which we identify with the familiar pillowcase. We investigate the  image in this pillowcase of the traceless representations of tangles obtained by removing a trivial tangle from 2-bridge knots and torus knots.  Using this, we compute the singular instanton homology of a variety of torus knots.  In many cases, our computations allow us to understand non-trivial differentials in the spectral sequence from Khovanov homology to singular instanton homology.  \end{abstract} 
 \maketitle

 \tableofcontents
 
 \section{Introduction}
 Kronheimer and Mrowka  have recently developed a variant of instanton homology for knots in 3-manifolds \cite{KM1, KM-khovanov, KM-filtrations} which they call {\em singular instanton knot homology}. In \cite{KM-khovanov},  they  construct a filtered chain  complex  whose total homology is the singular instanton homology of a knot $K\subset S^3$, and whose spectral sequence has $E_2$ page the Khovanov homology of $K$.  Inspired by early observations of similarities between their theory and Khovanov homology, Lewallen \cite{Lewallen}  showed that for 2-bridge knots  the Khovanov homology is isomorphic to the homology of the variety of  $SU(2)$ representations of the fundamental group of the knot complement sending the meridian to a traceless matrix.  For alternating knots, he showed that the Khovanov homology is isomorphic to the homology of the subvariety of binary dihedral representations.  
  
 In fact for alternating knots the Khovanov and singular
instanton homology groups are isomorphic (with the Khovanov bigrading appropriately collapsed to a  $\ZZ/4$ grading), a fact implied by the collapse of Kronheimer and Mrowka's spectral sequence  at the $E_2$ page.  In contrast, they show that  there are non-trivial higher differentials in the spectral sequence associated to the $(4,5)$ torus knot \cite[Section 11]{KM-filtrations}, and hence Khovanov and instanton homology do not  have the same rank, in general.  (Rasmussen noticed that  the Khovanov homology of the $(4,5)$ torus knot also has larger rank than its  Heegaard knot Floer homology groups. It is conjectured that there is similar spectral sequence in that context.) Zentner \cite{Zentner}  showed that for some alternating pretzel knots there are non-binary dihedral traceless representations (in contrast to 2-bridge knots), so that for these families one expects there to be non-trivial differentials in the singular instanton chain complex.

To make sense of this expectation, we should recall that the chain complex from which the  instanton homology is computed is  the Morse complex associated to a perturbation of a particular Chern-Simons functional.  This chain complex is $\ZZ/4$ graded, and is generated by certain gauge equivalence classes of perturbed-flat  connections. These connections live on an $SO(3)$ bundle on the complement of the link formed by adding a small meridional circle $H$ to $K$. The second Stiefel-Whitney class of the bundle is Poincar\'e dual to an arc $W$ connecting $K$ and $H$, as in Figure \ref{natural}.   The {\em unperturbed} flat moduli space can be identified with the conjugacy classes of  $SU(2)$ representations of the fundamental group of $S^3\setminus (K\cup H\cup W)$,  which take the meridians of $K$ and $H$ to traceless matrices in $SU(2)$, and  the meridian of $W$ to the non-trivial central element $-1$. A feature of these representation spaces is that, with the exception of the unknot in $S^3$, they are never non-degenerate; the corresponding (unperturbed) Chern-Simons functional is not Morse. Thus it is necessary to perturb the functional to identify the generators of the instanton complex.  A common method for perturbing the Chern-Simons functional is through the use of so-called {\em holonomy perturbations}, and such perturbations can be constructed quite generally in instanton Floer theories.  The use of such perturbations, however, obscures the connection between generators of the instanton chain complex and representations of the fundamental group.   In particular, it would be desirable to be able to effectively estimate the rank of a reduced instanton chain complex from a presentation of the fundamental group of the knot or link complement.  A general perturbation of the Chern-Simons functional will make this estimation impossible.

\begin{figure}
\centering
\def\svgwidth{1.2in}
 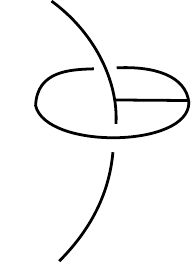
 \caption{A small segment of a knot $K$ and the meridional circle $H$ and arc $W$ used in the construction of the reduced singular instanton homology. \label{natural}}
 \end{figure}

The purpose of this article is to identify a certain perturbation explicitly and to use it to identify generators of the reduced instanton chain complex. The main conceptual step is to split a 3-manifold containing a knot along a {\em Conway sphere}; that is, a $2$--sphere which intersects the knot in four points.   The sphere decomposes the knot into two tangles, and determining generators of the instanton complex becomes an intersection problem for two Lagrangians in the relevant character variety of the $4$-punctured sphere.   The key to our result lies in picking  a Conway sphere for which one of the associated tangles is trivial, and constructing  concrete local (and hence universal) perturbations in this trivial piece.
 
 It turns out (Proposition \ref{s2prop}) that the relevant character variety for a 2-sphere with four marked points, $R(S^2,\{a,b,c,d\})$,  is a {\em pillowcase}, i.e. a topological 2-sphere with four singular points, understood as the quotient of the torus by the hyperelliptic involution or, equivalently, as the quotient of $\RR^2$:
 $$
 R(S^2,\{a,b,c,d\})\cong\RR^2/\sim,\ \ \ \  ~ (\gamma,\theta)\sim (-\gamma,-\theta) \sim(\gamma+2\pi m,\theta  +2\pi n).
 $$  
We describe paths in $ R(S^2,\{a,b,c,d\})$ by giving formulas for their lifts to $\RR^2$, e.g.~ $t\mapsto (\gamma(t), \theta(t))$.  In terms of the pillowcase, our main results, Theorem \ref{pertvar}  and Corollary \ref{corimage}, state the following.

\medskip

\begin{theorem} Let $A_1\cup A_2\subset B^3$ be a pair of unknotted arcs in the 3-ball, $H$ a small meridian of $A_1$, and $W$ an arc connecting $H$ and $A_1$. Let $P\subset B^3\setminus (A_1\cup A_2\cup H\cup W)$ be the perturbation curve    illustrated in Figure \ref{fig8}.    

Given small perturbation data $\pi=(P,\epsilon, f)$ (where $\epsilon>0$ and $f$ is an odd, $2\pi$-periodic function, e.g.~$f(\beta)=\sin(\beta))$, the perturbed reduced moduli space $R_\pi^\nat(B^2,A_1\cup A_2)$ is homeomorphic to a circle, and the image of the restriction map to the pillowcase 
$$\rho:R_\pi^\nat(B^2,A_1\cup A_2)\to R(S^2,\{a,b,c,d\})$$
is an immersion, given by 
$$\rho(\beta)=(\gamma(\beta), \theta(\beta))=
 (\beta+ \epsilon f(\beta)+ \tfrac\pi 2 ,  \beta- \epsilon  f(\beta)+ \tfrac\pi 2 ),~\beta\in S^1$$
as illustrated in Figure \ref{fig9}.  As $\epsilon\to 0$, this immersion limits to a generically 2-1 map onto the diagonal arc $\gamma=\theta$ in the pillowcase.
\end{theorem}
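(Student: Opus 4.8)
The plan is to reduce the perturbed flatness problem to an explicit matrix equation in $SU(2)$ and then solve it directly, so that the circle, the immersion formula, and the $\epsilon\to 0$ limit all fall out of an honest computation.

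First I would fix a presentation of $\pi_1$ of the complement of $A_1\cup A_2\cup H\cup W\cup P$ in $B^3$. Because $A_1\cup A_2$ is a trivial tangle, the relevant fundamental group is free on meridians $x,y$ of $A_1$ and $A_2$ (together with the meridian $h$ of the earring circle $H$ and the arc $W$), and from Figure~\ref{fig8} one reads off the word $w(x,y)$ carried by the perturbation curve $P$, so that $\hol_P=w(x,y)$; the figure-eight shape is arranged so that $P$ links $A_1$ and $A_2$ with \emph{opposite} signs, which is the geometric source of the sign asymmetry in the final formula. Next I would invoke the standard holonomy-perturbation dictionary (the one used to define $R^\nat_\pi$): a $\pi=(P,\epsilon,f)$--perturbed flat connection is a flat connection on the complement of $P$ for which the meridian $\mu_P$ of $P$, instead of being trivial, is sent to the rotation through angle $\epsilon f(\vartheta)$ about the axis of $\hol_P$, where $\vartheta$ is the rotation angle of $\hol_P$. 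Combined with the Wirtinger relation expressing $\mu_P$ through the meridians that $P$ crosses, this turns into a single deformation of the unperturbed tangle relation. Imposing that $x$, $y$ and $h$ be traceless and that the meridian of $W$ be sent to $-\mathbf 1$ (the $\nat$/earring conditions), and quotienting by conjugation, realizes $R^\nat_\pi(B^3,A_1\cup A_2)$ as the solution set of this explicit system.

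I would then normalize using the residual conjugation freedom, e.g.\ setting $x=\mathbf i$, and write the remaining traceless generators in terms of their free angular parameters; the perturbed tangle relation cuts these down to a single smooth circle coordinatized by an angle $\beta\in S^1$, on which every solution is irreducible for $\epsilon>0$ small, giving the homeomorphism $R^\nat_\pi(B^3,A_1\cup A_2)\cong S^1$ together with nondegeneracy of the perturbed moduli space. Restricting a normalized solution to the boundary four-punctured sphere and reading off the two pillowcase angle coordinates of Proposition~\ref{s2prop} then gives $\gamma(\beta)=\beta+\epsilon f(\beta)+\tfrac\pi2$ and $\theta(\beta)=\beta-\epsilon f(\beta)+\tfrac\pi2$: the two occurrences of $\pm\epsilon f(\beta)$ record the opposite linking of $P$ with $A_1$ and $A_2$, and the $\tfrac\pi2$ shift comes from the traceless normalization. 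Since $\rho'(\beta)=(1+\epsilon f'(\beta),\,1-\epsilon f'(\beta))$, which is never $(0,0)$ once $\epsilon<1/\sup|f'|$, the map $\rho$ is an immersion, matching Figure~\ref{fig9}; and setting $\epsilon=0$ collapses it to $\beta\mapsto(\beta+\tfrac\pi2,\ \beta+\tfrac\pi2)$, which wraps $S^1$ onto the diagonal arc $\gamma=\theta$. Because $(\gamma,\theta)\sim(-\gamma,-\theta)$ on the pillowcase, the fiber over a generic diagonal point is $\{\beta,\,-\pi-\beta\}$, so this limiting map is two-to-one away from the two corners $(0,0)$ and $(\pi,\pi)$, i.e.\ generically $2$--$1$ as claimed.

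The main obstacle is the first step: correctly installing the holonomy perturbation supported on the figure-eight $P$ and extracting the precise deformed matrix relation — getting the axis and the angle $\epsilon f(\vartheta)$ exactly right — and then checking that for all small $\epsilon$ the perturbed moduli space remains a smooth $1$--manifold with no reducibles and no extra components introduced by the earring. Once the perturbed equation is in hand, the quaternion algebra and the pillowcase bookkeeping are routine; the one point that must be tracked with care is the relative sign of the two linkings of $P$, since this is exactly what makes the image an immersed curve oscillating about the diagonal rather than a double cover of a segment.
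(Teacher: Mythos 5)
Your strategy is essentially the paper's: present $\pi_1$ of the ball complement via Wirtinger/Seifert--Van Kampen, normalize by conjugation so the first meridian is $\bbi$, impose the tracelessness and earring conditions, translate the holonomy-perturbation data into the constraint $\nu=\epsilon f(\beta)$ on the meridian/longitude pair of $P$, and then solve the resulting matrix relations to parameterize a circle and read off the pillowcase coordinates from Proposition \ref{s2prop}.  The immersion and $\epsilon\to 0$ analyses at the end are also carried out exactly as you describe, including the $\beta\mapsto -\pi-\beta$ involution making the limit generically $2$--$1$.

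Two points where your proposal diverges from, or falls short of, the paper.  First, a geometric misreading: $P$ is \emph{not} a figure-eight linking $A_1$ and $A_2$ with opposite signs.  It is a single unknotted loop linking one arc ($A_2$, whose meridians at the two ends are $b$ and $c$) together with the earring circle $H$.  The $\pm\epsilon f(\beta)$ asymmetry in $(\gamma,\theta)$ comes from the conjugation relation $c=\bar p\, b\, p$ (so $\theta=\gamma-2\epsilon f(\beta)$) combined with the earring relation $[a\bar p,h]=-1$ (which pins $\gamma=\beta+\epsilon f(\beta)+\tfrac{\pi}{2}$); it is not an ``opposite linking with $A_1$'' phenomenon.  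A version of your argument built on the figure-eight picture would produce a different longitude word for $P$ and hence different relations, so the mechanism you cite, though it coincidentally lands on the right formula, would not in fact survive being worked out.  Second, you correctly flag but do not supply the bookkeeping step that the solution set of the matrix equations is a \emph{single} circle of distinct conjugacy classes.  In the paper this is a genuine piece of work: one first obtains two a priori families of solutions (Equation (\ref{twocircles})) and must show that the second is conjugate to a reparametrization of the first, and then check that distinct $\beta$-values on the remaining circle are never conjugate.  Without that check, one has only shown the image of the parametrization surjects onto $R^\nat_\pi(B^3,A_1\cup A_2)$, not that the map $S^1\to R^\nat_\pi(B^3,A_1\cup A_2)$ is a homeomorphism.
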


\medskip

\noindent A more precise statement can be found in the body of the paper. For all the results in this article it suffices to take $f(\beta)=\sin(\beta)$ in the perturbation data.

\medskip
In Section \ref{unreduced case} we treat the   case corresponding to the unreduced instanton homology $I^\sharp(Y,K)$.  The counterpart of Theorem \ref{pertvar} in this context is Theorem \ref{pertvar2}, which states in part:

\medskip

\begin{theorem} With perturbation data $\pi=(P_1\cup P_2, \epsilon, \sin(\beta))$ supported on the two curves $P_1$ and $P_2$ in Figure \ref{unreduced}, the perturbed unreduced moduli space $R^\sharp_\pi(B^3, A_1\cup A_2)$ is a disjoint union of two   circles, parameterized by $\beta\in S^1$ and $i=1,2$. The restriction map 
 $R^\sharp_\pi(B^3, A_1\cup A_2)\to R(S^2,\{m,b,c,n\})$ is an immersion, given by 
 $$\rho_i(\beta)= 
 \big(-\tau_i(\beta) + \frac{\pi}{2} +\beta +\epsilon\sin\beta,
 -\tau_i(\beta) + \frac{\pi}{2} +\beta -\epsilon\sin\beta\big), ~ i=1,2,$$
where $\tau_1(\beta)=\arcsin(-\tfrac{1}{2}\sin\beta)$ and $\tau_2(\beta)=\pi-\tau_1(\beta)$. As $\epsilon\to 0$, this immersion limits to a generically 4-1 map onto the diagonal arc $\gamma=\theta$ in the pillowcase.
\end{theorem}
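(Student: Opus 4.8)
The plan is to follow the proof of Theorem~\ref{pertvar} essentially verbatim, the one new ingredient being the second perturbation curve $P_2$ of Figure~\ref{unreduced}. First I would read off $\pi_1$ of the complement of $A_1\cup A_2$, the extra unknotted component of the unreduced construction, and $P_1\cup P_2$ from the diagram; because $A_1\cup A_2$ is a trivial tangle this group is free on meridians, and imposing the traceless condition pins each boundary meridian to the conjugacy class of $\imaginary\in SU(2)$, a $2$-sphere of possible values. A perturbed-flat connection is then recorded by this representation-theoretic data together with, for each $j$, the holonomy angle $\beta_j$ of the connection along $P_j$; by the mechanism already used for $P$ in Theorem~\ref{pertvar}, the holonomy perturbation with data $(\epsilon,\sin)$ replaces the flatness relation ``a meridian of $P_j$ is trivial'' by ``a meridian of $P_j$ is the rotation by $\epsilon\sin\beta_j$ (up to a universal normalization) about the axis of the $P_j$-holonomy.''

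Next I would write out the resulting algebraic system. The curve $P_1$ is positioned just as $P$ is in the reduced case, so it contributes the same relation --- that the boundary meridians on the two sides of $P_1$ differ by the $\pm\epsilon\sin\beta$ rotation --- which is what puts $+\epsilon\sin\beta$ into $\gamma$ and $-\epsilon\sin\beta$ into $\theta$. The curve $P_2$ is the one that ``sees'' the extra unknotted component; combining its perturbation relation with the constraints coming from that component produces a single scalar equation on an auxiliary angle $\tau$ (the angle by which both pillowcase coordinates are commonly shifted), which I expect to reduce to
\[
\sin\tau=-\tfrac12\sin\beta ,
\]
the coefficient $\tfrac12$ being dictated by the way $P_2$ links the strands in Figure~\ref{unreduced}. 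For each $\beta$ this has precisely the two solutions $\tau_1(\beta)=\arcsin(-\tfrac12\sin\beta)\in(-\tfrac\pi2,\tfrac\pi2)$ and $\tau_2(\beta)=\pi-\tau_1(\beta)$, and they never coincide, since $\tau_1=\tau_2$ would force $\sin\tau_1=1$ while $|{-}\tfrac12\sin\beta|\le\tfrac12$. Hence, after dividing by the residual gauge freedom, the solution set is a disjoint union of two circles, indexed by $i=1,2$, each parameterized by $\beta\in S^1$ with no monodromy in the branch $\tau_i$.

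Then I would compute the restriction map. Tracking the conjugating frame across the Conway sphere as in Theorem~\ref{pertvar} --- the $\tfrac\pi2$-shift coming from the chosen traceless normal form --- the boundary values assemble into the asserted pillowcase coordinates
\[
\rho_i(\beta)=\big({-}\tau_i(\beta)+\tfrac\pi2+\beta+\epsilon\sin\beta,\ {-}\tau_i(\beta)+\tfrac\pi2+\beta-\epsilon\sin\beta\big).
\]
For the immersion statement it is cleanest to pass to the coordinates $\gamma-\theta$ and $\gamma+\theta$: one computes $\tfrac{d}{d\beta}(\gamma-\theta)=2\epsilon\cos\beta$ and $\tfrac{d}{d\beta}(\gamma+\theta)=2\big(1-\tau_i'(\beta)\big)$, and since $|\tau_i'(\beta)|\le\tfrac12<1$ (a one-line estimate on $\tfrac{d}{d\beta}\arcsin(-\tfrac12\sin\beta)$), the derivative of $\gamma+\theta$ is bounded below by $1$; thus $\rho_i$ has nonvanishing derivative --- is an immersion --- for every $\epsilon\ge 0$.

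Finally, the $\epsilon\to0$ limit: setting $\epsilon=0$ collapses $\rho_i$ onto the diagonal $\gamma=\theta$ via $g_i(\beta):=\beta-\tau_i(\beta)+\tfrac\pi2$. Since $g_i'=1-\tau_i'>0$ and $g_i(\beta+2\pi)=g_i(\beta)+2\pi$ (because $\tau_i$ is $2\pi$-periodic), each $g_i$ is a degree-one self-map of the circle; composing with the quotient of $\RR$ onto the diagonal arc of the pillowcase, which folds $2$-to-$1$ away from the two corner points, gives a generically $2$-to-$1$ map on each of the two circles, hence a generically $4$-to-$1$ map of $R^\sharp_\pi(B^3,A_1\cup A_2)$ onto the diagonal arc. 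I expect the main obstacle to be the bookkeeping in the first two steps --- extracting the perturbation equations from Figure~\ref{unreduced}, in particular pinning down the coefficient $-\tfrac12$ in $\tau_i$, and carrying the conjugating elements through the restriction map so that the output lands in exactly the stated pillowcase coordinates; everything downstream of that is the short computation sketched above, running parallel to the reduced case.
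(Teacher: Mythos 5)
Your overall strategy matches the paper's: apply Theorem~\ref{pertvar} to the inner sphere $S^2_1$ to put the representation on $a,b,c,d$ in the standard $\beta$-parameterized form, then extract the constraint from $P_2$ in terms of an auxiliary angle $\tau$ (with $m\mapsto e^{\tau\bbk}\bbi$), observe that each $\beta$ yields two values of $\tau$, and read off the pillowcase coordinates after conjugating by $e^{-\tau\bbk/2}$. Your disjointness check, immersion estimate, and the degree-one analysis in the $\epsilon\to 0$ limit all go through.

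There is, however, a substantive gap exactly where you flagged uncertainty: the coefficient $\tfrac12$ in $\sin\tau = -\tfrac12\sin\beta$ is \emph{not} dictated by how $P_2$ links the strands. Geometrically $P_2$ sits just like $P_1$ (each links one tangle strand and one Hopf component). The $\tfrac12$ comes instead from a deliberate asymmetry in the perturbation data: the paper assigns the function $\epsilon\sin$ to $P_1$ but $2\epsilon\sin$ to $P_2$. With that choice, the relation $d=\bar p_2 a p_2$ reads $e^{-2\epsilon\sin\beta\,\bbk}\bbi = e^{4\epsilon\sin\tau\,\bbk}\bbi$, yielding $\sin\beta = -2\sin\tau$. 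If you instead used $\epsilon\sin$ on both curves --- as the headline statement $\pi=(P_1\cup P_2,\epsilon,\sin\beta)$ might suggest --- you would get $\sin\beta = -\sin\tau$, whose solution locus in $S^1\times S^1$ is the union of the two circles $\tau=-\beta$ and $\tau=\pi+\beta$, which intersect at $\beta=\pm\tfrac\pi2$; the moduli space would then be a wedge of circles rather than a disjoint union, and your disjointness argument ($|\sin\tau_i|\le\tfrac12<1$) would fail. So the factor $2$ on $P_2$ is essential to the theorem, and a proof that does not consciously choose unequal coefficients on the two curves will not produce the stated answer.
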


\medskip

\noindent Figure \ref{unpertpic} illustrates the images in the pillowcase of the three curves, $\rho_1,\rho_2$ of Theorem 2, and $\rho$ of Theorem 1.

\medskip

As alluded to above, a useful  implication  of these results is that it reduces the problem of identifying generators of the singular instanton complexes for  a knot $K$ in a 3-manifold $Y$ to an intersection problem in the pillowcase.  More precisely, consider a 3-ball  which intersects $(Y,K)$ in a pair of unknotted arcs $A_1\cup A_2$. Setting  $(Y_0,K_0):=(Y\setminus B^3,K\setminus(A_1\cup A_2))$, we obtain a tangle decomposition
 \begin{equation}
\label{intlag}
(Y,K)=(Y_0,K_0)\cup_{(S^2,\{a,b,c,d\})} (B^3, A_1\cup A_2).
\end{equation}
This decomposition yields, upon passing to  the appropriate  moduli spaces,  an intersection diagram:

\begin{equation}\label{SVKD}
\begin{diagram}\node[2]{R(S^2,\{a,b,c,d\})}\\
\node{R(Y_0,K_0)}\arrow{ne}\node[2]{R^\nat_\pi(B^3, A_1\cup A_2)}\arrow{nw}\\
\node[2]{R_\pi^\nat(Y,K)}\arrow{nw}\arrow{ne}
\end{diagram}
\end{equation}

\noindent The  intersection  $R_\pi^\nat(Y,K)$  parameterizes the generators of the chain complex defining the instanton homology $I^\nat(Y,K)$, provided this is a   non-degenerate set. The unperturbed space $R^\nat(Y,K)$ is   never  non-degenerate, except for the unknot in $S^3$, in contrast to other examples of this method in low-dimensional topology such as  Casson's invariant \cite{AM}.

In the decomposition (\ref{intlag}),  the  non-trivial part of the $SO(3)$ bundle and the perturbation $\pi$ needed to make the set $R(Y,K)$ non-degenerate, has been placed entirely inside the simple space $B^3$.  Theorem \ref{pertvar} states that $R^\nat_\pi(B^3, A_1\cup A_2)  \to R(S^2,\{a,b,c,d\})$ is a smooth immersion of a circle, and identifies the image precisely.

It follows that the problem of describing the set $R^\nat(Y,K)$ of generators of the instanton homology chain complex  is reduced to understanding the space $R(Y_0, K_0)$   and its restriction 
to the pillowcase, a problem that involves only the fundamental group of the 2-stranded tangle complement $Y_0\setminus K_0$ and its peripheral structure.
Indeed, for simple knots like  2-bridge knots and torus knots, no further perturbations are needed. For general knots only perturbations in the knot complement (which have been studied in detail for a long time, see e.g.~ \cite{herald})  are required.

\medskip

With this understanding in place, we turn our attention to the problem of describing $R(Y_0,K_0)$ and its image in the pillowcase $R(S^2,\{a,b,c,d\})$ for 2-bridge knots and torus knots. More precisely, given a knot $K$ in $S^3$ and a 3-ball $B^3$ meeting $K$ in a pair of unknotted arcs $A_1\cup A_2$, the complement $Y_0$ of this 3-ball is again a 3-ball  and contains the two component tangle $K_0= K\setminus (A_1\cup A_2)$.  The moduli space $R(Y_0,K_0)$ is identified with the space of conjugacy classes of $SU(2)$ representations of $\pi_1(Y_0\setminus K_0)$ which send meridians of $K_0$ to traceless matrices. 
This space, and its restriction to the pillowcase, turns out to be a very interesting tangle invariant. In Section \ref{twobridgesection}  we identify $R(Y_0,K_0)$ and its image in the pillowcase for rational tangles (the tangles which glue with the trivial tangle to produce 2-bridge knots) and prove the following theorem. 

\medskip

\begin{theorem}  For  the 2-bridge knot  $K=K(p/q)$ and the 3-ball $B^3\subset S^3$ meeting $K$ in a pair of unknotted arcs as in Figure \ref{2-bridgeK}, the space $R(Y_0, K_0)$ is an arc, and the restriction $R(Y_0, K_0)\to R(S^2,\{a,b,c,d\})$ is the embedding
$$(\gamma,\theta)=(qt, (q-p)t), ~t\in [0,\pi]. $$
\end{theorem}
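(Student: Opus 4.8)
The plan is to proceed in three steps: identify $\pi_1(Y_0\setminus K_0)$ together with its peripheral system, deduce that $R(Y_0,K_0)$ is an arc, and then compute the restriction to the pillowcase. First I would realize the rational tangle $K_0\subset Y_0\cong B^3$ by applying to the trivial tangle the sequence of horizontal and vertical twists coming from a continued fraction expansion of $p/q$. Each twist is a homeomorphism of pairs, so $Y_0\setminus K_0$ is a genus--two handlebody; hence $\pi_1(Y_0\setminus K_0)$ is free of rank two, with a basis consisting of meridians $\mu_1,\mu_2$ of the two arcs of $K_0$. The twists act on the boundary four--punctured sphere by powers of a Dehn twist, and tracking this action expresses the four boundary meridians $a,b,c,d$ as explicit words in $\mu_1,\mu_2$; equivalently one records the induced action on the first homology of the double branched cover. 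The outcome of this bookkeeping is the classical fact that the double branched cover $\Sigma_2(Y_0,K_0)$ is a solid torus $V_{p/q}$ whose meridian disk is bounded by the slope encoded by $p/q$ in the standard framing of $\partial V_{p/q}=T^2$ --- the same data that makes $\Sigma_2(S^3,K(p/q))=L(p,q)$.

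The second step is then immediate. A traceless representation sends $\mu_1,\mu_2$ to unit imaginary quaternions $P_1,P_2\in S^2$, and up to conjugation such a pair is determined by the angle $t\in[0,\pi]$ between them: conjugate $P_1$ to $\bbi$, and then $P_2=\cos t\,\bbi+\sin t\,\bbj$ up to the residual circle $\{e^{s\bbi}\}$. Thus $R(Y_0,K_0)\cong[0,\pi]$, with irreducible interior and the two reducibles at $t=0,\pi$; write $\rho_t$ for the representative with $\rho_t(\mu_1)=\bbi$ and $\rho_t(\mu_2)=\cos t\,\bbi+\sin t\,\bbj$.

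For the restriction map I would use Proposition \ref{s2prop} in the form: a traceless representation of $\pi_1(S^2\setminus\{a,b,c,d\})$ (all four generators of order four) restricts, on the index--two ``even'' subgroup, to an abelian representation $\pi_1(T^2)=\ZZ^2\to U(1)$, whose two holonomies give the pillowcase coordinates $(\gamma,\theta)\in\RR^2/\!\sim$. For $\rho_t$ one computes, on the even subgroup generated by $\mu_1^2,\mu_1\mu_2,\mu_2^2$, that $\rho_t(\mu_1^2)=\rho_t(\mu_2^2)=-1=e^{\pi\bbk}$ and $\rho_t(\mu_1\mu_2)=-\cos t+\sin t\,\bbk=e^{(\pi-t)\bbk}$, so the restriction lands in the fixed circle $\{e^{s\bbk}\}$ and the holonomy of any even word is an integer combination of $\pi$ and $\pi-t$ --- in particular affine--linear in $t$ with integer slope, equal to $0$ or $\pi$ modulo $2\pi$ at $t=0$. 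Expressing the two $\pi_1(T^2)$--generators used in Proposition \ref{s2prop} as curves on $\partial V_{p/q}$ (via the twisting data from the first step) and evaluating, the two slopes come out as $q$ and $q-p$ and the endpoints $t=0,\pi$ land on corners of the pillowcase, which gives $(\gamma(t),\theta(t))=(qt,(q-p)t)$ once the $t=0$ corner is taken to be the origin. The cleanest implementation of this last computation is probably an induction on the length of the continued fraction of $p/q$. Embeddedness is then elementary: since $\gcd(q,q-p)=\gcd(p,q)=1$, the pillowcase relations $q(t\pm t')\in 2\pi\ZZ$ and $(q-p)(t\pm t')\in 2\pi\ZZ$ force $t\pm t'\in 2\pi\ZZ$, which on $[0,\pi]$ holds only for $t=t'$, and the same divisibility shows the arc meets a corner only at $t=0,\pi$.

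The main obstacle is exactly that last matching: identifying the two generators of $\pi_1(T^2)$ used to coordinatize the pillowcase in Proposition \ref{s2prop} with specific curves on $\partial V_{p/q}$, and carrying the orientation and sign conventions through the twisting moves so that the two integer slopes are \emph{exactly} $q$ and $q-p$ rather than some $GL_2(\ZZ)$--equivalent pair. Linearity in $t$ is forced by the reducibility of the restricted representation on the even subgroup, so all of the arithmetic content --- which two linear functions of $t$ occur --- is precisely the lens space data of $L(p,q)$, and that is where the care is required.
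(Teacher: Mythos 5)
Your overall architecture is sound and matches the paper's in the two easy steps: $Y_0\setminus K_0$ is a genus-two handlebody so $\pi_1$ is free on the two tangle meridians, and hence $R(Y_0,K_0)\cong[0,\pi]$ parameterized by the angle between their images (this is the paper's Proposition~\ref{unpert0} applied to $Y_0$). Your observation that the restriction to the even-index subgroup lands in a fixed circle $\{e^{s\bbk}\}$ so that the pillowcase coordinates are automatically affine-linear in $t$ is also correct and is a slightly slicker way to see linearity than the paper gives (the paper gets it from the explicit linear action of each twist in Equation~(\ref{eqn18})).

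The gap is exactly the one you flag in your last paragraph, and I want to push back on treating it as a footnote: determining that the two integer slopes are precisely $q$ and $q-p$, rather than some $GL_2(\ZZ)$-equivalent pair, \emph{is} the content of the theorem. Your proposal attributes it to ``the lens space data of $L(p,q)$,'' but the lens space identifies the solid-torus double cover only up to its self-homeomorphisms, and the pillowcase coordinate system of Proposition~\ref{s2prop} is pinned down by a choice of generators $a,b,c,d$, not by any intrinsic feature of the double cover; matching the two requires carrying bookkeeping through the twist region. When you say ``the cleanest implementation\ldots is probably an induction on the length of the continued fraction,'' you are outlining but not performing precisely the computation that the paper carries out: each elementary twist acts by the $2\times 2$ integer matrix of Equation~(\ref{eqn18}), the composite is the matrix product in Equation~(\ref{matrixeqn}), and Lemma~\ref{pandq} proves by induction on the continued-fraction length that this product sends $(t,t,0)^T$ to $\pm(qt,(q-p)t,-pt)^T$. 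Without that induction (or an equivalent), the claim that the slope pair is $(q,q-p)$ and not, say, $(q', q'-p)$ for some other $q'\equiv q^{\pm1}\pmod p$ is unproved. Everything else in your write-up (the embeddedness argument from $\gcd(p,q)=1$, the corners landing at $t=0,\pi$) is routine once the slopes are pinned down.

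A secondary, smaller issue: you invoke Proposition~\ref{s2prop} ``in the form'' of a statement about the even subgroup and $\pi_1(T^2)$. That is not what Proposition~\ref{s2prop} says---it directly parameterizes the pillowcase by the angles $(\gamma,\theta)$ appearing in $b\mapsto e^{\gamma\bbk}\bbi$, $c\mapsto e^{\theta\bbk}\bbi$ relative to $a\mapsto\bbi$, with no torus or covering space in sight. The double-cover reinterpretation you want (identifying $(\gamma,\theta)$ with holonomies of two specific loops on the branched double cover of $(S^2,\{a,b,c,d\})$) is true and not hard, but it is an extra layer you would need to establish, and it is exactly where sign and basis ambiguities creep in. The paper's route, staying entirely inside $R(S^2,\{a,b,c,d\})$ and tracking the meridian angles directly through the twists, avoids having to set up that dictionary.
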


\medskip

Using Theorem \ref{pertvar}  and Corollary \ref{corimage} it follows that  the set $R^\nat_\pi(S^3, K)$ is a union of pairs of isolated non-degenerate points $x_{\ell_1},x_{\ell_2}$, $\ell=1,2,\cdots,\frac{p-1}{2}$    and one additional point $\alpha'$.  
In particular, the chain complex $CI^\nat(S^3,K)$ for the reduced instanton homology of $K(p/q)$ is generated by these $ 2(\frac{p-1}{2})+1=p$  points.  From \cite{KM-khovanov} we know that all differentials are zero so that these points generate the instanton homology.  While the instanton homology of 2-bridge knots is easily  determined by the spectral sequence of \cite{KM-khovanov}, we find it interesting to be able to produce an explicit complex with trivial differential (in contrast to the complex coming from the spectral sequence which has many more generators than the rank of  homology, or the highly degenerate unperturbed character variety studied in \cite{Lewallen}).

\medskip

In Section \ref{torusknotsect}
we analyze the corresponding representation spaces for tangles arising from torus knots. These are more complicated than the  spaces associated to 2-bridge knots. For a particular tangle decomposition $(Y_0,K_0)\cup (B^3, A_1\cup A_2)$ of  the $(p,q)$ torus knot, the space $R(Y_0,K_0)$   can be identified with a certain  singular  semi-algebraic curve in $\RR^2$, cut out by a 2-variable polynomial  determined entirely by the integers $p,q$ and $r,s$, where $pr+qs=1$.   We obtain several results (Theorems \ref{torusknotreps} and \ref{toruspt2}) which give different descriptions of $R(Y_0,K_0)$ for torus knots. See Figure \ref{fig34} for the explicit example of the $(3,4)$ torus knot, a knot whose   instanton chain complex (for all small enough perturbations) necessarily has a non-trivial differential. Our results show that for torus knots a reduced singular instanton chain complex can be found with $|\sigma(K)|+1$ generators, where $\sigma(K)$ denotes the signature of $K$.  Moreover, for small generic holonomy perturbations this is the fewest possible generators, even if the instanton homology has smaller rank.

\medskip

The pillowcase arises in another   context when studying the $SU(2)$ representation spaces of knot complements: as the character variety $\chi(T)$ of {\em all} $SU(2)$ representations of the fundamental group of a torus.
 We will have occasion to use both incarnations, $R(S^2,\{a,b,c,d\})$ and $\chi(T)$, of the pillowcase in this article. Indeed, our analysis of $R^\nat(Y,K)$ calls on  the examination of the restriction from $R(Y_0,K_0)$ to the  pillowcase  $R(S^2,\{a,b,c,d\})$,  as well as the restriction of the full character variety $\chi(Y,K)$ (the space of conjugacy classes of all representations of $\pi_1(Y\setminus K)$) to the  pillowcase  $\chi(T)$ associated to the peripheral torus.  
 
  This latter context is familiar and can be found in many places in the literature, starting with Klassen's article \cite{Klassen}, and, in the enlarged context of $SL(2,\CC)$ representation, as the variety defined by the A-polynomial of a knot \cite{CCGLS}.  The interplay between these two manifestations of the pillowcase is exploited in Section \ref{data}.  We combine our results with the foundational theorems of Kronheimer-Mrowka to  calculate and tabulate   the reduced instanton complex, the Khovanov homology, and the instanton homology  for various families of torus knots. In particular we give new examples of torus knots for which there  are   non-trivial differentials in the instanton chain complex, examples for which there are many higher non-trivial differentials in the spectral sequence, and non 2-bridge examples for which all differentials in the chain complex are zero.  For example the spectral sequence for the $(5,7)$ torus knot  drops in rank from 29 to 17 after the $E_2$ page, and the spectral sequence for the $(13,28)$ torus knot collapses at  Khovanov homology.

\medskip

It is worth contrasting our approach with the one taken in \cite{JacRub}, where  a knot in $S^3$ is described by a closed braid, and a Lagrangian intersection picture is obtained by cutting along a $2n$-punctured 2-sphere which separates the braid from a trivial braid used to close it. The purpose of that article, however, is to explore the symplecto-geometric aspects of their setup, whereas our emphasis is on singular instanton homology and its efficient computation.

  \medskip
  
  Readers interested in a quick geometric overview of the contents of this article are encouraged to examine Figures \ref{fig8} and \ref{fig9}, which  encapsulate the statement of Theorem 1. Figures \ref{unreduced} and \ref{unpertpic} illustrate  Theorem 2.
  The reader is also encouraged to compare Figures \ref{figtrefoil} and \ref{fig12} illustrating the Trefoil knot,   Figures  \ref{fig21} and \ref{fig34} illustrating the $(3,4)$ torus knot,  and Figures \ref{figchi(7-2)} and \ref{fig72} illustrating the knot $7_2$.  These figures illustrate how to determine generators of the singular instanton chain complexes of these knots from the intersection  of arcs of traceless and perturbed traceless representations in the pillowcase.

 \section{Unit quaternions}In this section we establish  notation and recall some basic facts about $SU(2)$. Identify $SU(2)$ with the unit quaternions, 
 $$SU(2)=\{a+b\bbi+c\bbj+d\bbk~|~a,b,c,d\in \RR, ~a^2+b^2+c^2=1\}.$$
 The inverse of a unit quaternion $q=a+b\bbi+c\bbj+d\bbk$ equals its conjugate $\bar q=a-b\bbi-c\bbj-d\bbk$. 
 The Lie algebra of $SU(2)$ is  identified with the {\em pure quaternions}
 $$su(2)=\{b\bbi+c\bbj+d\bbk~|~b,c,d\in \RR\}$$
with invariant inner product  $v\cdot w=-\Real(vw)$.
 We denote the exponential map $su(2)\to SU(2)$  by $q\mapsto e^q$.
	Let $C(\bbi)\subset SU(2)$ denote the conjugacy class of $\bbi$: this is the 2-sphere of {\em pure unit quaternions} 
$$C(\bbi)=\{b\bbi+c\bbj+d\bbk~|~b,c,d\in \RR, b^2+c^2+d^2=1\}.$$
We also call $C(\bbi)$ the {\em traceless} unit quaternions, since they correspond to the traceless $2\times 2$ matrices in the usual description of $SU(2)$.

Notice that $C(\bbi)$ lies in the Lie algebra $su(2)$, and that as a subset of $SU(2)$ it corresponds to the unit quaternions $q\in SU(2)$ satisfying $\Real(q)=0$. 
Furthermore,
$$e^{\nu Q}=\cos(\nu)+\sin(\nu)Q  \text{ for }\nu\in \RR,  \text{ and }  Q^2=-1\text{ for }Q\in C(\bbi).$$  In particular, every non-zero vector in $su(2)$ can be uniquely written in the form $tQ$ for some $t\in \RR_{> 0}, Q\in C(\bbi)$. Every element of $SU(2)$ can be written in the form $e^{tQ}$ for some $t\in \RR, Q\in C(\bbi)$; this representation is unique for $SU(2)\setminus \{\pm 1\}$ if we choose $0<t<\pi$.

We summarize a few well known and easily verified properties of the conjugation action of $SU(2)$ on itself in the following proposition.

\begin{prop} \label{basic} Consider the action of $SU(2)$ on itself by conjugation.
\begin{enumerate}
\item The stabilizer of any subgroup  $G\subset SU(2)$, 
$$\Stab(G)=\{a\in SU(2)~|~ ag\bar a=g~\text{for all }~g\in G \},$$can be one of the three types: $\{\pm 1\},  \{e^{tQ}\}$ for some $Q\in C(\bbi)$, or $SU(2)$ according to whether 
$G$ is non-abelian, $G$ is abelian but non-central, or $G$ is contained in the center $\{\pm 1\}$.
\item Given a pure unit quaternion $Q\in C(\bbi)$ and $t\in \RR$, the conjugation action of $e^{tQ}$ on the 2-sphere $C(\bbi)$ of pure unit quaternions is rotation about the axis through $Q$  of angle $2t$.
\item If $q$ is a unit quaternion and $Q \in C(\bbi)$  is a  pure unit quaternion which together satisfy  $-Q=  qQ\bar q$,  then $q$ is itself a pure unit quaternion,  and $q$ and $Q$ are perpendicular; i.e., $\Real(qQ)=0$.
\item If $Q_1,Q_2\in C(\bbi)$ and $t_1,t_2\in \RR$, then $$\Real(e^{t_1Q_1}e^{t_2Q_2})=\cos t_1\cos t_2-\sin t_1\sin t_2 \cos\nu$$
where $\nu$ denotes  angle (in $su(2)=\RR^3$) between $Q_1$ and $Q_2$.
\item If $e^{tQ_1}$ and $e^{sQ_2}$ commute, with $Q_i\in C(\bbi)$ and $e^{tQ_1}, e^{sQ_2}\ne \pm 1$, then $Q_1=\pm Q_2$.

\end{enumerate}
\qed\end{prop}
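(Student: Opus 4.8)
The plan is to treat the five items as a short chain of quaternion computations, proving the two purely algebraic facts (4) and (5) first, then the geometric fact (2), and finally deducing (3) and (1) from these. Throughout I would use only the formula $e^{\nu Q}=\cos\nu+\sin\nu\,Q$ already recorded in the excerpt, together with the identity $Q_1Q_2=-(Q_1\cdot Q_2)+Q_1\times Q_2$ for $Q_1,Q_2\in su(2)$ (where $Q_1\times Q_2\in su(2)$ is the cross product and $Q_1\cdot Q_2=-\Real(Q_1Q_2)$), and the fact that $\bar q=q^{-1}$ so that $aq\bar a=q$ is the same as $aq=qa$.

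For (4), expand the product $e^{t_1Q_1}e^{t_2Q_2}=(\cos t_1+\sin t_1Q_1)(\cos t_2+\sin t_2Q_2)$; taking real parts kills the $su(2)$ terms and leaves $\cos t_1\cos t_2+\sin t_1\sin t_2\,\Real(Q_1Q_2)=\cos t_1\cos t_2-\sin t_1\sin t_2\cos\nu$. For (5), expand both sides of $e^{tQ_1}e^{sQ_2}=e^{sQ_2}e^{tQ_1}$ and subtract; all scalar terms cancel and one is left with $\sin t\sin s\,(Q_1Q_2-Q_2Q_1)=2\sin t\sin s\,(Q_1\times Q_2)=0$. Since $e^{tQ_1},e^{sQ_2}\ne\pm1$ forces $\sin t,\sin s\ne0$, we get $Q_1\times Q_2=0$, hence $Q_1=\pm Q_2$ as both are unit vectors.

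Next I would prove (2): conjugation by $e^{tQ}$ is an $\RR$-algebra automorphism of the quaternions preserving $\Real$, so it preserves $su(2)$ and its inner product; it fixes $Q$ because $Q$ commutes with $\cos t+\sin t\,Q$. Picking $P\in C(\bbi)$ with $P\perp Q$, so $\{Q,P,QP\}$ is an orthonormal basis of $su(2)$ and $PQ=-QP$, a one-line computation gives $e^{tQ}Pe^{-tQ}=\cos(2t)\,P+\sin(2t)\,QP$, which displays the action as rotation by $2t$ about the $Q$-axis (and incidentally shows it lies in $SO(3)$). For (3), I would argue algebraically from $qQ+Qq=0$: writing $q=a+v$ with $a=\Real(q)\in\RR$, $v\in su(2)$, and using $vQ+Qv=-2(v\cdot Q)$, the $su(2)$-part of $qQ+Qq$ is $2aQ$ and its real part is $-2(v\cdot Q)$; hence $a=0$ and $v\perp Q$, so $q=v$ is a pure quaternion (automatically of unit length since $|q|=1$) with $\Real(qQ)=-(v\cdot Q)=0$.

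Finally, (1). The condition defining $\Stab(G)$ just says $a$ centralizes $G$. If $G\subseteq\{\pm1\}$ the centralizer is all of $SU(2)$. Otherwise choose $g=e^{tQ}\in G$ with $g\ne\pm1$, $0<t<\pi$, $Q\in C(\bbi)$; by (2) an element $e^{sP}$ commutes with $g$ iff its conjugation fixes $Q$, which (as $\sin t\ne0$) forces $P=\pm Q$ or $e^{sP}=\pm1$, so the centralizer of the single element $g$ is exactly the circle $T_Q=\{e^{sQ}\mid s\in\RR\}$. If $G$ is abelian and non-central, every element of $G$ centralizes $g$, so $G\subseteq T_Q$; then $T_Q$ (being abelian) centralizes $G$, while the centralizer of $G$ lies in that of $g$, namely $T_Q$ — so $\Stab(G)=T_Q=\{e^{sQ}\}$. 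If $G$ is non-abelian, pick non-commuting $g_1=e^{t_1Q_1},g_2=e^{t_2Q_2}\in G$; any common centralizing element $\ne\pm1$ would, by (5), lie in both $T_{Q_1}$ and $T_{Q_2}$ and force $Q_1=\pm Q_2$, whence by uniqueness of the $e^{tQ}$ normal form $g_1$ and $g_2$ would commute, a contradiction — so $\Stab(G)=\{\pm1\}$. The only step that is more than bookkeeping is this last one: identifying the centralizer of a non-central element with the maximal torus through it, and checking two distinct such tori meet only in the center; the rest is direct manipulation of the exponential formula.
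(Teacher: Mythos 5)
Your proof is correct. The paper itself offers no argument for this proposition: it is introduced with the phrase ``well known and easily verified properties'' and closed immediately with a \verb|\qed|, so there is no authorial proof to compare against. Your write-up supplies a clean elementary verification in the natural logical order (algebraic identities (4)--(5), then the rotation formula (2), then (3) and (1) as corollaries), and each step checks out; in particular, the expansion giving $e^{tQ}Pe^{-tQ}=\cos(2t)\,P+\sin(2t)\,QP$ for $P\perp Q$ uses $QPQ=P$ correctly, the decomposition $qQ+Qq=2\Real(q)\,Q-2(v\cdot Q)$ into $su(2)$-part and real part correctly forces $q$ pure and perpendicular to $Q$ in (3), and the identification of the centralizer of a non-central element with the circle through it, together with the observation that two such circles meet only in $\{\pm1\}$ unless their axes coincide, is exactly the content needed for (1). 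This is the standard argument; you have simply written out what the authors take for granted.
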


\section{The pillowcase}

 In this section we introduce the pillowcase as a quotient space of $\RR^2$, and describe how it arises as an $SU(2)$ character variety in two different ways.
\subsection{The pillowcase as quotient of $\RR^2$}
Let $G$ denote the split extension of $\ZZ^2$  by $\ZZ/2$ where the generator $\tau\in\ZZ/2$ acts on $(m,n)\in \ZZ^2$ by $\tau\cdot(m,n)=-(m,n)$. Then $G$ acts  affinely on the plane $\RR^2$ by
$$(m,n)\cdot(x,y)=(x+2\pi m,y+2\pi n),\ \ \tau\cdot(x,y)=(-x,-y).$$
The quotient $\RR^2/G$ is called {\em the pillowcase}. 
The quotient map 
\begin{equation}
\label{pillowcasemap}
\RR^{2}\to \RR^{2}/G
\end{equation}
is a branched cover, branched over four points with preimage  the lattice $(\ZZ\pi)^2\subset\RR^2$ of points with non-trivial isotropy.  The pillowcase is homeomorphic to a 2-sphere.  

  A fundamental domain for the action is the rectangle $[0,\pi]\times [0,2\pi]$ (see Figure \ref{pillow}), and the identifications along its boundary are
 \begin{equation}\label{pillowcaserel}
(x,0)\sim (x,2\pi),~(0,y)\sim(0,2\pi-y),\text{~ and~}(\pi,y)\sim(\pi,2\pi-y).
\end{equation}
Hence the moniker ``pillowcase.''  Taking the quotient in two steps, first by $\ZZ^2$ and then by $\ZZ/2$, exhibits the pillowcase as the quotient of the torus by the hyperelliptic involution.

   \begin{figure}
   \centering
\def\svgwidth{3.5in}
 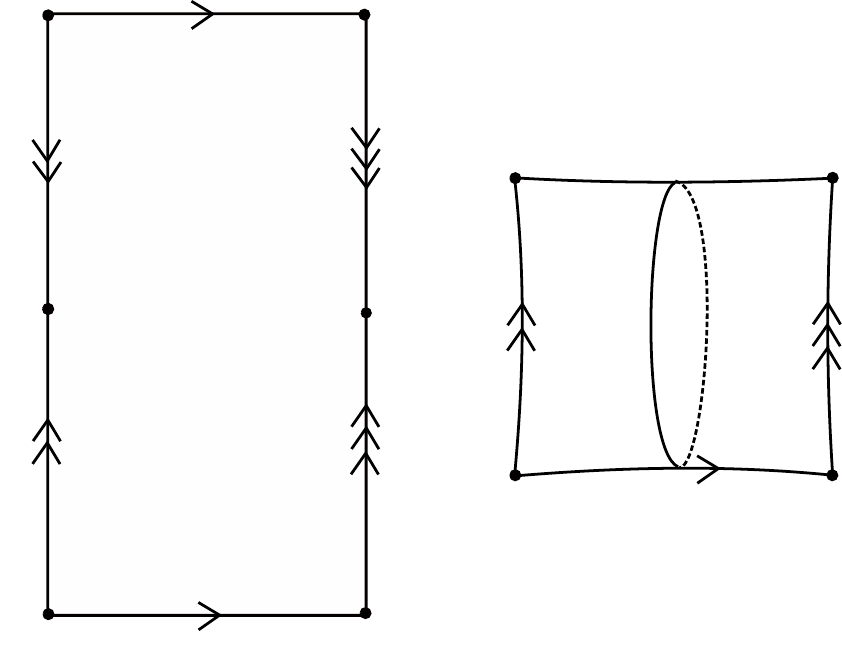
 \caption{The pillowcase.  On the left is a fundamental domain for the action of $\ZZ^2\rtimes \ZZ/2$ on $\RR^2$.  On the right is the ``pillowcase" obtained by performing the identifications on the left.  Topologically, the pillowcase is a 2-sphere. \label{pillow}}
\end{figure}

We will often describe a curve in the pillowcase in terms of a lift to $\RR^2$. A curve may have different lifts,  for example the straight line segments $t\mapsto (2t,3t)$ and $t\mapsto (-2t, -3t+4\pi), ~t\in [0,\frac{\pi}{2}]$ define the same curve in the pillowcase.

\subsection{The pillowcase as the $SU(2)$ character variety of the torus}
The $SU(2)$ character variety of the 2-dimensional torus $T$ is the space of conjugacy classes of representations:
$$\chi(T)=\{\rho:\pi_1(T)\to SU(2)\}/_\text{\small conjugation}.$$
If $\mu,\lambda\in \pi_1(T)$ denote generators, then to any pair $(x,y)\in \RR^2$ of real numbers one can assign the conjugacy class of the representation
$$\mu\mapsto e^{x \bbi   },\lambda\mapsto e^{y \bbi  }$$ in $\chi(T)$. The resulting map $\RR^2\to \chi(T)$ factors through the branched cover of Equation (\ref{pillowcasemap}) and induces a homeomorphism of  the pillowcase with $\chi(T)$.  Note that the identification depends on the choice of generators $\mu,\lambda$.

The representations which send $\mu$ to a traceless matrix, i.e. to $C(\bbi)$, correspond exactly to the line $x=\frac{\pi}{2}$, since $e^{\pi \bbi  /2}=\bbi$.  The line $\{x=\frac{\pi}{2}\}\subset \RR^2$ is sent to the circle in the pillowcase:
\begin{equation}
\label{tlcircle}
S(\bbi)=\{ \rho:\pi_1(T)\to SU(2)~|~\rho(\mu)=\bbi, \rho(\lambda)= e^{ y\bbi }\}
\end{equation}

(For more details, see e.g.~\cite{Kirk-Klassen-CS1}.)

 \subsection{The  pillowcase as  traceless representations  of $S^2$ with four marked points}   Consider  a $2$-sphere with four marked points, labelled $a,b,c,$ and $d$, respectively, so that the fundamental group $\pi_1(S^2\setminus \{a,b,c,d\})$ is presented (by abuse of notation) as
 $$\pi_1(S^2\setminus\{a,b,c,d\})=\langle a,b,c,d~|~ ba=cd\rangle$$
as indicated in Figure \ref{fig1.1}.

We denote by $R(S^2,\{a,b,c,d\})$ the space of conjugacy classes of homomorphisms  which take the loops at each puncture to a traceless quaternion: 
 $$R(S^2,\{a,b,c,d\})=\{\rho:\langle a,b,c,d~|~ ba=cd\rangle\to SU(2)~|~
 \rho(a), \rho(b), \rho(c),\rho(d)\in C(\bbi)\}/_\text{\small conjugation}.$$

\begin{prop} \label{s2prop}  There is a surjective quotient map
$$\psi:\RR^2\to R(S^2,\{a,b,c,d\})$$ 
given by 
$$\psi(\gamma, \theta): a\mapsto \bbi,~ b\mapsto e^{\gamma\bbk}\bbi,~ c\mapsto e^{\theta\bbk}\bbi, ~d\mapsto e^{(\theta-\gamma)\bbk}\bbi.$$ 
The map $\psi$   factors through the branched cover of Equation (\ref{pillowcasemap}) and induces a homeomorphism of the pillowcase with $R(S^2,\{a,b,c,d\})$.
The  four corner points  correspond  to reducible non-central representations, and all other points correspond  to  irreducible representations.
\end{prop}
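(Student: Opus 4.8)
The plan is to check that $\psi$ is a well-defined map with values in $R(S^2,\{a,b,c,d\})$, prove it is surjective, show its fibers are exactly the orbits of the $G$-action, and finally upgrade the resulting set-bijection to a homeomorphism by a compactness argument; the statement about reducible representations will drop out of the fiber analysis. For well-definedness I only need to verify the single relation $ba=cd$ on the image: since $\bbi$ and $\bbk$ anticommute we have $e^{s\bbk}\bbi=\bbi e^{-s\bbk}$, so $(e^{\gamma\bbk}\bbi)\,\bbi=-e^{\gamma\bbk}$ while $(e^{\theta\bbk}\bbi)(e^{(\theta-\gamma)\bbk}\bbi)=e^{\theta\bbk}e^{-(\theta-\gamma)\bbk}\bbi\bbi=-e^{\gamma\bbk}$; and each $e^{\mu\bbk}\bbi=\cos\mu\,\bbi+\sin\mu\,\bbj$ lies in $C(\bbi)$, so $\psi(\gamma,\theta)$ is a traceless representation.

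For surjectivity I would begin with an arbitrary $\rho$ sending $a,b,c,d$ into $C(\bbi)$ and conjugate so that $\rho(a)=\bbi$. The stabilizer of $\bbi$ is the circle $\{e^{t\bbi}\}$, which acts on $su(2)$ by fixing $\bbi$ and rotating the $\bbj$-$\bbk$ plane (Proposition \ref{basic}(2)); using this residual freedom I can move $\rho(b)$ into the $\bbi$-$\bbj$ plane with nonnegative $\bbj$-coordinate, i.e.\ $\rho(b)=e^{\gamma\bbk}\bbi$ with $\gamma\in[0,\pi]$. The relation then forces $\rho(c)\rho(d)=\rho(b)\rho(a)=-e^{\gamma\bbk}$; writing $\rho(c)=P$, $\rho(d)=Q\in C(\bbi)$ and using $PQ=-(P\cdot Q)+P\times Q$ for pure quaternions, the pure part yields $P\times Q=-\sin\gamma\,\bbk$. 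When $\sin\gamma\ne 0$ this forces $P$ and $Q$ to be perpendicular to $\bbk$, hence $\rho(c)=e^{\theta\bbk}\bbi$ and $\rho(d)=e^{\phi\bbk}\bbi$, and then $-e^{(\theta-\phi)\bbk}=\rho(c)\rho(d)=-e^{\gamma\bbk}$ gives $\phi\equiv\theta-\gamma$, so $\rho=\psi(\gamma,\theta)$. When $\sin\gamma=0$ the commuting pair $\rho(a),\rho(b)\in\{\pm\bbi\}$ leaves the entire circle $\{e^{t\bbi}\}$ acting, $\rho(c)\rho(d)=\pm1$ forces $\rho(d)=\pm\rho(c)$, and conjugating $\rho(c)$ into the $\bbi$-$\bbj$ plane once more exhibits $\rho$ as some $\psi(\gamma,\theta)$ with $\gamma\in\{0,\pi\}$.

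Next I would pin down the fibers. $G$-invariance of $\psi$ is immediate: $2\pi$-periodicity of $\mu\mapsto e^{\mu\bbk}$ takes care of the translations, and conjugating $\psi(\gamma,\theta)$ by $\bbi$ (rotation by $\pi$ about the $\bbi$-axis, sending $\bbj\mapsto-\bbj$) produces $\psi(-\gamma,-\theta)$. Conversely, if $\psi(\gamma,\theta)$ and $\psi(\gamma',\theta')$ are conjugate then the conjugating element centralizes $\bbi$, hence lies in $\{e^{t\bbi}\}$, and demanding that conjugation by $e^{t\bbi}$ carry $\cos\gamma\,\bbi+\sin\gamma\,\bbj$ and $\cos\theta\,\bbi+\sin\theta\,\bbj$ back into the $\bbi$-$\bbj$ plane forces $\sin(2t)=0$ whenever $(\gamma,\theta)\notin(\pi\ZZ)^2$; then $e^{2t\bbi}=\pm1$ and $(\gamma',\theta')$ equals $(\gamma,\theta)$ or $(-\gamma,-\theta)$ up to a vector in $(2\pi\ZZ)^2$. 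The finitely many corner cases are checked directly (the four tuples $\psi(0,0),\psi(\pi,0),\psi(0,\pi),\psi(\pi,\pi)$ lie in visibly distinct conjugacy classes). Hence $\psi$ factors through the branched cover (\ref{pillowcasemap}) and descends to a continuous bijection $\bar\psi$ from the pillowcase onto $R(S^2,\{a,b,c,d\})$; since the pillowcase is compact and $R(S^2,\{a,b,c,d\})$ is Hausdorff (being $\{(A,B,C,D)\in C(\bbi)^4 : BA=CD\}$ modulo conjugation by the compact group $SU(2)$), $\bar\psi$ is a homeomorphism. Finally, $\psi$ evaluated at the four points of $(\pi\ZZ)^2\cap[0,\pi]^2$ has image the abelian, non-central group $\langle\bbi\rangle$, whereas if $\psi(\gamma,\theta)$ were reducible then $\bbi=\rho(a)$ would commute with $\rho(b)$ and $\rho(c)$, forcing $\sin\gamma=\sin\theta=0$ by Proposition \ref{basic}(5); so the corners are exactly the reducible representations.

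The hard part will be the surjectivity step, i.e.\ seeing that fixing the conjugacy class of $(\rho(a),\rho(b))$ already determines $\rho(c)$ and $\rho(d)$. The cross-product identity makes this transparent in the irreducible range, but the degenerate case $\sin\gamma=0$ — where $\rho(a),\rho(b)\in\{\pm\bbi\}$ and the binary-dihedral rigidity of Proposition \ref{basic}(1) fails — must be handled separately using the leftover conjugation freedom, and these are precisely the points that become the four singular corners of the pillowcase.
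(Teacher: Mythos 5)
Your proof is correct and follows essentially the same strategy as the paper's: conjugate $\rho(a)$ to $\bbi$, use the residual circle stabilizer to move $\rho(b)$ into the $\bbi\bbj$-plane, then argue that $\rho(c)$ is forced into that plane as well (with the case $\sin\gamma = 0$ handled by further conjugation). The only cosmetic difference is that you extract the constraint on $\rho(c)$ from the vector part of the product $\rho(c)\rho(d) = -e^{\gamma\bbk}$ via the cross-product identity, whereas the paper takes the scalar part $\operatorname{Re}\rho(d) = \operatorname{Re}(\rho(c)^{-1}\rho(b)\rho(a)) = 0$; both are one-line computations yielding the same dichotomy. Your fiber analysis, compactness argument for the homeomorphism, and identification of the four corners with the reducible (non-central) classes fill in details the paper leaves as routine and are all sound.
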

 
   \begin{figure}   \centering
\def\svgwidth{1.8in}
 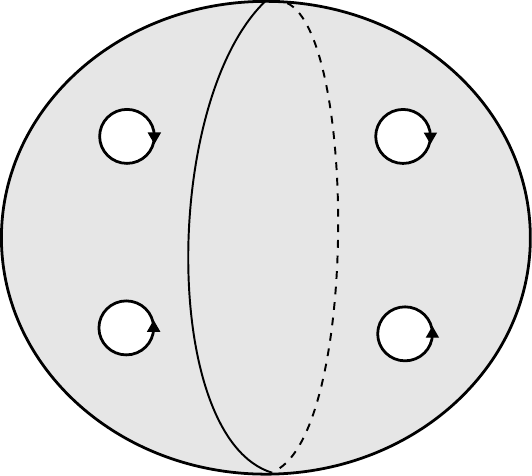

 \caption{\label{fig1.1}A four-punctured sphere with boundary curves added that generate its fundamental group (after choosing a basepoint near the barycenter of the curves  and arcs to the boundary).  }
\end{figure}
 
\begin{proof}  Given any $(\gamma,\theta)\in \RR^{2}$, the assignment $a\mapsto \bbi,~ b\mapsto e^{\gamma\bbk}\bbi,~ c\mapsto e^{\theta\bbk}\bbi, ~d\mapsto e^{(\theta-\gamma)\bbk}\bbi$   satisfies $ba=cd$, and hence $\psi$ maps into $R(S^{2}, \{a,b,c,d\})$.

 Any traceless representation $\rho:\pi_1(S^2\setminus\{a,b,c,d\})\to SU(2)$ can be conjugated so that $\rho(a)=\bbi$.  
Since $e^{-\frac{t}{2} \bbi} (e^{t\bbi} \bbj) e^{\frac{t}{2}\bbi }=\bbj,$ $\rho$ can be further conjugated (fixing $\rho(a)$) so that $\rho(b)=\cos(\gamma)\bbi+\sin (\gamma)\bbj=e^{\gamma\bbk}\bbi$ for some $\gamma\in [0,\pi]$.  

Suppose that $\rho(c)=c_1\bbi + c_2\bbj + c_3\bbk$.  Since $q\in SU(2)$ lies in $C(\bbi)$ if and only if $\Real(q)=0$, the equation 
$$0=\Real(\rho(d))=\Real(\rho(c^{-1}ba))=\Real(\rho(c^{-1})(e^{\gamma\bbk}\bbi)\bbi)
=-c_3\sin\gamma$$
implies that either $\sin\gamma=0$ or else $c_3=0$. 
If $\sin\gamma=0$, then $\rho(a)=\bbi$ and $\rho(b)=\pm \bbi$, and hence $\rho$ may be further conjugated by $ e^{-\frac{t}{2} \bbi}$, fixing $\rho(a)$ and $\rho(b)$, so that $c_3=0$.  So  we may assume by conjugating  that $c_{3}=0$ in either case. Hence $\rho(c)=c_1\bbi+ c_2\bbj=e^{\theta\bbk}\bbi$ for some $\theta\in[0,2\pi]$.

Thus the map $\psi:\RR^{2}\to R(S^{2}, \{a,b,c,d\})$ is onto. It is easy to check that $\psi(\gamma_1, \theta_1 )=\psi(\gamma_2, \theta_2 )$ if and only if $(\gamma_{2},\theta_{2})=\pm (\gamma_{1},\theta_{1})+(2\pi m,2\pi n)$, 
so that $\psi$ passes to a homeomorphism $\RR^{2}/G\to 
R(S^{2}, \{a,b,c,d\})$.
\end{proof}

\section{Representation spaces of knots}

 To a knot $K$ in a 3-manifold $Y$ with tubular neighborhood $N(K)$ we will assign several spaces of conjugacy classes of $SU(2)$ representations. The notation ``$\chi$'' will be used when we consider all conjugacy classes of representations of $\pi_1(Y\setminus N(K))$, and ``$R$'' will be used when we restrict to those representations which send the meridians of $K$ into the conjugacy class of pure unit quaternions $C(\bbi)$. We call such representations   {\em traceless} since $C(\bbi)$ corresponds to matrices in $SU(2)$ of trace zero.

\subsection{All representations}
 First, let $\chi(Y, K)$ denote the space of conjugacy classes of all representations of $\pi_1(Y\setminus N(K))$ into $ SU(2)$:
 \begin{equation}\label{chi}
\chi(Y, K)=\{\rho:\pi_1(Y\setminus N(K))\to SU(2)\}/_\text{\small conjugation}.
\end{equation}

\noindent  Restricting to the boundary  of $N(K)$ yields a  map$$
\chi(Y,K)\to \chi(\partial N(K)).$$
Choosing generators $\mu_K,\lambda_K$ for $\pi_1(\partial N(K))$ gives an identification of $\chi(\partial N(K))$ with the pillowcase. In particular, if $Y$ is a homology sphere we take the generators
 $\mu_K$ to be the canonical (isotopy class of) meridian and $\lambda_K$ the longitude arising as the boundary of an oriented Seifert surface.

If $Z$ is any 3-manifold containing a torus $T=S^1\times S^1\subset Z$,  restricting representations    yields a  map from the character variety $\chi(Z)$ of (all) $SU(2)$ representations of $\pi_1(Z)$  to the   pillowcase:
   \begin{equation}
\label{regres}
\chi(Z)\to \chi(T).
\end{equation}

 \subsection{Traceless representations} 
Next, denote by 
$R(Y, K)\subset\chi(Y, K)$
the subset  of  conjugacy classes of representations sending every meridian of $K$ into $C(\bbi)$.  Note that all meridians of $K$ are conjugate since we are assuming that $K$ is a knot rather than a link,  and so any representation sending a particular meridian $\mu_K\in \pi_1(Y\setminus N(K))$ into $C(\bbi)$ takes all meridians into $C(\bbi)$. Hence 
 \begin{equation}
\label{R}
R(Y,K)=\{\rho:\pi_1(Y\setminus N(K))\to SU(2)~|~ \rho(\mu_K)\in C(\bbi)\}/_\text{\small conjugation}.
\end{equation}
Given a pair $\mu_K,\lambda_K$ of generators of $\pi_1(\partial N(K))$ with $\mu_K$ a meridian, then   $R(Y,K)$ can be described as the preimage under the restriction $\chi(Y,K)\to \chi(\partial N(K))$ of the circle  $S(\bbi)$ of Equation (\ref{tlcircle}).

\medskip

If $S^2\subset Y$ is an embedded 2-sphere intersecting $K$ transversely in four points $a,b,c,d$,  we can restrict representations to $S^2$ to obtain a map:
   \begin{equation}
\label{tlres}
R(Y,K)\to R(S^2,\{a,b,c,d\})
\end{equation}
which we consider as the traceless analogue of (\ref{regres}).
\bigskip

\subsection{Adding an earring to avoid reducibles}

The spaces $\chi(Y,K)$ and $R(Y,K)$ are stratified according to the three possible stabilizers   of the constituent $SU(2)$  representations:  $\{\pm 1\}, ~S^1,$ or $SU(2)$.  To avoid singularities in moduli spaces arising from reducible connections, Kronheimer-Mrowka \cite{KM-khovanov}  introduce an auxiliary construction which ensures that only the center $\{\pm 1\}$ appears as a stabilizer (actually their construction requires the use of connections in a non-trivial $SO(3)$  bundle, with $w_2$ dual to the arc $W$ described below).   Let us recall their construction.

Fix a base point on $K$ and identify $N(K)$ with the unit normal (disk) bundle of $K$.  Let $H$ be the circle of radius $\tfrac{1}{2}$ in the normal disk fiber containing the base point. Denote the link $K\cup H$ by $K^\nat$.  Let  $W\subset Y$ be a radial arc in this normal disk  connecting $K$ and $H$.    
 See Figure \ref{natural}.

The boundary of a small tubular neighborhood of $H$ is a torus which is punctured once  by the arc $W$. Thus for the appropriate choices of basings and orientation of loops, the meridian $\mu_{H}$ and longitude $\lambda_{H}$ of this torus satisfy $[\lambda_{H}, \mu_{H}]=\mu_{W}$. Moreover, since $H$ is a small circle linking $K$, the longitude of $H$ equals the meridian of $K$, i.e. $\lambda_{K}=\mu_{H}$.  Thus  $[\mu_{K},\mu_{H}]=\mu_{W}$ in $\pi_1(Y\setminus (K^\nat\cup W))$.

Denote by $R^\nat(Y,K)$ the space of conjugacy classes of representations $\rho:\pi_1(Y\setminus (K^\nat\cup W))\to SU(2)$
which send the meridians  $\mu_K, \mu_H$ of $K$ and $H$ to $C(\bbi)$ and the meridian $\mu_W$ of the arc $W$ to $-1$:
\begin{equation}
\label{Rnat}
R^\nat(Y,K)=\{\rho:\pi_1(Y\setminus (K^\nat\cup W))\to SU(2)~|~
\rho(\mu_K),\rho(\mu_H)\in C(\bbi), ~\rho(\mu_W)=-1\}/_\text{\small conj}.
\end{equation}

\medskip

When $Y=S^3$ (or any homology sphere), there is a distinguished conjugacy class  $\alpha\in R^\nat(Y,K)$ which is characterized completely by the requirement that its restriction to   the complement of a (large) tubular neighborhood $N(K)$ of $K$  containing $H\cup W$ is abelian. The  representation $\alpha $ can be uniquely conjugated to satisfy:
\begin{equation}\label{alphadef}
\alpha(\mu_K)=\bbi, ~\alpha(\mu_H)=\bbj,~ \alpha(\mu_W)=-1, ~\text{and ~}\alpha(\lambda_K)=1.
\end{equation}
Since the set of all   conjugates of $\mu_K$ by loops in $Y\setminus N(K)$ generate $\pi_1(Y\setminus N(K))$, $\alpha$ sends each of these conjugates to $\bbi$.  The representation $\alpha$ and its restriction to $\pi_1(Y\setminus N(K))$, which will be denoted  $\alpha_{\pi/2}$,   appear frequently below.

\medskip

If $Z\subset Y$ is a codimension zero submanifold which contains $K^\nat\cup W$, then there is a restriction map
$R^\nat(Y,K)\to R^\nat(Z,K)$. For example, one can take $Z=N(K)$, a tubular neighborhood of $K$ large enough to contain $H \cup W$   to get
 $R^\nat(N(K), K)$.
 
\begin{prop}\label{nbd} 
 The space $R^\nat(N(K), K)$ is homeomorphic to a circle, and every representation in $R^\nat(N(K), K)$ is non-abelian. Moreover, the restriction map $$R^\nat(N(K), K)\to \chi(\partial N(K))$$ is injective, with image the vertical circle $S(\bbi) $ of Equation (\ref{tlcircle}).
\end{prop}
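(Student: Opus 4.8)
The plan is to work out the fundamental group $\pi_1(N(K)\setminus(K^\nat\cup W))$ explicitly, restrict attention to the traceless representations with $\mu_W\mapsto -1$, and parametrize the resulting conjugacy classes by a single angle. First I would observe that $N(K)\setminus(K^\nat\cup W)$ deformation retracts onto a handlebody-type piece: inside the solid torus $N(K)$, removing the core $K$, the parallel circle $H$, and the radial arc $W$ joining them leaves a space whose fundamental group is free on two generators, which we may take to be the meridian $\mu_K$ of $K$ and the meridian $\mu_H$ of $H$ (the meridian $\mu_W$ of $W$ being expressible as the commutator $[\mu_K,\mu_H]$ as noted in the text, and $\lambda_K=\mu_H$). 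So a representation $\rho$ is determined by the pair $(\rho(\mu_K),\rho(\mu_H))\in C(\bbi)\times C(\bbi)$, subject to the single constraint $\rho([\mu_K,\mu_H])=-1$.

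Next I would analyze that constraint using Proposition \ref{basic}. Write $\rho(\mu_K)=P$ and $\rho(\mu_H)=Q$ with $P,Q\in C(\bbi)$. The condition $PQ\bar P\bar Q=-1$ rearranges to $PQ\bar P=-Q$, i.e. conjugation of $Q$ by $P$ sends $Q$ to $-Q$. By part (3) of Proposition \ref{basic}, this forces $P$ and $Q$ to be perpendicular pure unit quaternions; conversely, by part (2), if $P\perp Q$ then conjugation by $P=e^{(\pi/2)P'}$... more directly, two perpendicular elements of $C(\bbi)$ anticommute, so $PQ=-QP$ and hence $PQ\bar P\bar Q = -QP\bar P \bar Q = -Q\bar Q\cdot(\text{sign})=-1$ works out. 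Thus $R^\nat(N(K),K)$ is identified with the space of ordered orthonormal pairs $(P,Q)$ in the $2$-sphere $C(\bbi)\cong S^2$, modulo simultaneous conjugation. The space of such ordered orthonormal pairs is the unit tangent bundle of $S^2$, which is $SO(3)$; simultaneous conjugation is the action of $SO(3)$ on itself by left (or conjugation) translation, with quotient a single... I should instead normalize directly: conjugate so that $P=\bbi$; then $Q$ ranges over the circle $C(\bbi)\cap\bbi^\perp=\{\cos\psi\,\bbj+\sin\psi\,\bbk\mid \psi\in\RR\}$, and the residual stabilizer of $P=\bbi$ is $\{e^{t\bbi}\}$, which acts on this circle of $Q$'s by rotation (part (2)), acting transitively. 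Hence after a further conjugation we may take $Q=\bbj$; but then the remaining freedom is conjugation by elements fixing both $\bbi$ and $\bbj$, which is only $\{\pm1\}$, giving a single point — which is wrong, so I have over-counted the normalization. The correct bookkeeping: conjugating to $P=\bbi$ uses up the $SU(2)$ action down to $\Stab(\bbi)=\{e^{t\bbi}\}$, which acts on the circle of admissible $Q$ with stabilizer $\{\pm1\}$, so the quotient is a circle's worth... no: a transitive action has a one-point quotient. The resolution is that one must not fix the basepoint on $K$ rigidly; rather, $R^\nat(N(K),K)$ as a \emph{set of conjugacy classes} is a point, contradicting the claim.

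Let me reconsider: the claim is that $R^\nat(N(K),K)$ is a \emph{circle} and maps \emph{injectively} to $\chi(\partial N(K))$ with image $S(\bbi)$. The peripheral torus here is $\partial N(K)$, with meridian $\mu_K$ and longitude $\lambda_K=\mu_H$. So the restriction records precisely $(\rho(\mu_K),\rho(\mu_H))$ up to conjugacy. Thus $R^\nat(N(K),K)\hookrightarrow \chi(\partial N(K))$ \emph{is} the statement that a class in $R^\nat(N(K),K)$ is determined by $(\rho(\mu_K),\rho(\mu_H))$ up to conjugacy — but since $\pi_1$ is generated by $\mu_K,\mu_H$, every conjugacy class is so determined, giving injectivity for free. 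The content is: the image equals $S(\bbi)$. Now $\mu_K\mapsto C(\bbi)$ forces the first coordinate to be (conjugate to) $\bbi$, landing on the line $x=\pi/2$; the admissible second coordinates $\rho(\mu_H)$ form a full circle's worth of traceless quaternions perpendicular to $\rho(\mu_K)$, and after conjugating so $\rho(\mu_K)=\bbi$ the class of $\rho(\mu_H)=e^{y\bbi}\bbj$ gives exactly $\lambda_K\mapsto e^{y\bbi}\bbj$, which under the pillowcase parametrization traces out all of $S(\bbi)$ as $y$ varies — that is the vertical circle. So the identification $R^\nat(N(K),K)\cong S(\bbi)$ as a circle follows, and non-abelianness is automatic since $\rho(\mu_K)=\bbi$ and $\rho(\mu_H)\perp\bbi$ never commute (part (5)).

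So the proof skeleton is: (1) compute $\pi_1(N(K)\setminus(K^\nat\cup W))$ as the free group on $\mu_K,\mu_H$ with $\mu_W=[\mu_K,\mu_H]$, $\lambda_K=\mu_H$; (2) show the relation $\rho(\mu_W)=-1$ together with tracelessness is equivalent to $\rho(\mu_K),\rho(\mu_H)$ being a perpendicular pair in $C(\bbi)$, invoking parts (2)--(4) of Proposition \ref{basic}; (3) conjugate so $\rho(\mu_K)=\bbi$ and read off that the residual choices are parametrized by $\rho(\mu_H)=e^{y\bbi}\bbj$, $y\in\RR/2\pi$, i.e. a circle; (4) match this to $S(\bbi)$ via the parametrization of $\chi(\partial N(K))$ in Equation (\ref{tlcircle}), noting the restriction map is injective because $\mu_K,\mu_H$ generate and is onto $S(\bbi)$ by the explicit description; (5) conclude non-abelianness from part (5) of Proposition \ref{basic}. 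I expect the main obstacle to be step (1) — carefully justifying the $\pi_1$ computation and the identification $\lambda_K=\mu_H$, $\mu_W=[\mu_K,\mu_H]$ with consistent basings and orientations (the excerpt asserts these, so one may largely cite that discussion), together with being scrupulous in step (3) about how much conjugation freedom remains so as not to mis-count the quotient as a point versus a circle; the key is that one does \emph{not} further normalize $\rho(\mu_H)$, since its conjugacy-class position relative to the fixed $\rho(\mu_K)=\bbi$ is exactly the modulus.
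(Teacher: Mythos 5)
There is a genuine gap stemming from a mis-identification of the fundamental group. You take $\pi_1(N(K)\setminus(K^\nat\cup W))$ to be free on $\mu_K,\mu_H$, with $\lambda_K=\mu_H$. This is not correct: the longitude $\lambda_K$ of the boundary torus $\partial N(K)$ is an independent generator. (The text does literally write ``$\lambda_K=\mu_H$,'' but from the surrounding sentence --- ``the longitude of $H$ equals the meridian of $K$'' --- the intended identity is $\lambda_H=\mu_K$; homologically $\lambda_K$ and $\mu_H$ cannot coincide, since $\lambda_K$ links $K$ with multiplicity $0$ and $H$ with multiplicity $1$, while $\mu_H$ links $H$ with multiplicity $1$ and $K$ with multiplicity $0$, and no change of framing fixes this.) The correct presentation, which the paper gives explicitly a bit further on, is $\pi_1(N(K)\setminus(K\cup H\cup W))=\langle \mu_K,\mu_H,\mu_W,\lambda_K\mid \mu_W=[\mu_K,\mu_H],\;[\mu_K,\lambda_K]=1\rangle$, with $\lambda_K$ a genuine third generator commuting with $\mu_K$.

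This error propagates through the rest of the argument. You correctly deduce that after conjugating $\rho(\mu_K)=\bbi$, the value $\rho(\mu_H)$ is perpendicular to $\bbi$, and you correctly notice that the residual $S^1$ of conjugations fixing $\bbi$ acts transitively on the circle of such $\rho(\mu_H)$, leaving a single orbit. That computation is right as far as it goes --- the problem is that the modulus is not in $\rho(\mu_H)$ at all, but in $\rho(\lambda_K)$, which you have dropped. Since $\lambda_K$ commutes with $\mu_K$, necessarily $\rho(\lambda_K)=e^{\ell\bbi}$ for some $\ell$, and this $\ell$ is the free circle parameter: the residual $S^1$ stabilizer of $\bbi$ also fixes $e^{\ell\bbi}$, so it can be used to normalize $\rho(\mu_H)=\bbj$ without affecting $\ell$. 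The conjugacy class is exactly the datum $\ell\bmod 2\pi$, and the restriction $(\mu_K,\lambda_K)\mapsto(\bbi,e^{\ell\bbi})$ traces out $S(\bbi)$. Your final resolution --- that ``one does not further normalize $\rho(\mu_H)$'' --- is therefore backwards: one does normalize $\rho(\mu_H)$ to $\bbj$, and the circle's worth of classes comes from the forgotten generator $\lambda_K$. Relatedly, the claim that the image in $\chi(\partial N(K))$ records $(\rho(\mu_K),\rho(\mu_H))$ is incorrect; it records $(\rho(\mu_K),\rho(\lambda_K))$, and $\rho(\mu_H)=e^{y\bbi}\bbj$ is not of the commuting form $e^{y\bbi}$ needed to land on the circle $S(\bbi)$ of Equation (\ref{tlcircle}).
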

\begin{proof}
Any representation in $R^\nat(N(K),K)$ may be conjugated so that $\rho(\mu_K)=\bbi$. 
Since $\rho(\lambda_K)$ commutes with $\rho(\mu_K)$ it follows that $\rho(\lambda_K)=e^{\ell\bbi}$ for some $\ell$.

The relation $[\rho(\mu_K),\rho(\mu_H)]=\rho(\mu_W)=-1$ implies that $\rho$ is non-abelian and that $\rho(\mu_H)$ is perpendicular to $\bbi$ (Proposition \ref{basic}).
Thus $\rho(\mu_H)=\cos\theta\bbj +\sin\theta\bbk=e^{\theta\bbi}\bbj$ for some  $\theta$. 
Further conjugation by $e^{-\theta\bbi/2}$ fixes $\rho(\mu_K)=\bbi$ and $\rho(\lambda_K)=e^{\ell\bbi}$ but rotates so that 
$\rho(\mu_H)=\bbj$.

Conversely, any choice of $e^{\ell\bbi}\in S^1$ defines a unique conjugacy class in $R^\nat(N(K),K)$ by sending $\mu_K$ to $\bbi$, $\lambda_K$ to $e^{\ell\bbi}$, $\mu_H$ to $\bbj$, and $\mu_W$ to $-1$.
 \end{proof}

  Note that we can think of $K^\nat$ as obtained from $K$ by forming the connected sum of $K$ with one component of a Hopf link whose components are spanned by the arc $W$. Kronheimer-Mrowka also introduce a variant of this construction, which they denote by $K^\sharp$, obtained by taking the disjoint union of $K$ with a Hopf link rather than the connected sum. This leads to a different representation space $R^\sharp(Y,K)$; see Section \ref{unreduced case} below.

\subsection{Gluing representations and the relation between $R^\nat(Y,K)$ and $R(Y,K)$}\label{gluing}

The spaces $R(Y,K)$ are better known than $R^\nat(Y,K)$.  The relationship between the two is that $R^\nat(Y,K)$ maps to  $R(Y,K)$ with fibers either a circle or a point depending on the reducibility of the image. We make this precise in Proposition \ref{circle or point} below.

\medskip
We first present a  folklore description of the character variety of a free product with amalgamation in terms of the character varieties of its pieces. 
To describe it, fix a compact Lie group $G$ and (for this section only) let 
$\ccH(-)$ denote the functor which takes a discrete group  $H$ to  the space $\Hom(H,G)$ and let $\chi(-)$ denote the functor that takes $H$ to $\Hom(H,G)/_\text{\small conjugation}$. 

Suppose that the finitely presented group $H$ is  given a decomposition  as a free product with amalgamation
$H=A*_S B$. Then clearly
$$\ccH(H)\cong \ccH(A)\times_{\ccH(S)}\ccH(B):=\{(\rho_A,\rho_B)\in \ccH(A)\times \ccH(B)~|~ \rho_A|_S=\rho_B|_S\}.$$

\noindent The Lie group $G$ acts diagonally by conjugation on $\ccH(H)$ with quotient $\chi(H)$, and similarly for the subgroups $A,B$, and $S$. There is a surjective map 
$$\ccH(A)\times_{\ccH(S)}\ccH(B)\to
\chi(A)\times_{\chi(S)}\chi(B):=\{(c_A,c_B)\in \chi(A)\times \chi(B)~|~ c_A|_S=c_B|_S\}$$
which factors through $\chi(H)$.
These form a diagram: 
\[
 \begin{diagram}
\node{\ccH(A)\times_{\ccH(S)}\ccH(B)}\arrow[2]{e,t}{p_1}\arrow{se,l}{p_3}\node[2]{\chi(H)}\arrow{sw,r}{p_2}\\
\node[2]{\chi(A)\times_{\chi(S)}\chi(B)}
\end{diagram}
\]

Let Stab$(\rho)\subset G$ denote the centralizer of the image of a representation $\rho$,
\begin{equation}\label{stabilizer}
\text{Stab}(\rho)=\{g\in G~|~ \rho(x)=g\rho(x)g^{-1}\text{~for all~}x\}.
\end{equation}

\begin{lem}\label{double}
 Let $(c_A,c_B)\in \chi(A)\times_{\chi(S)}\chi(B)$ and choose $(\rho_A, \rho_B)\in p_3^{-1}(c_A,c_B)$ (so that $\rho_A|_S=\rho_B|_S$).  Then the fiber $p_2^{-1}(c_A,c_B)$ is homeomorphic to the double coset space
 $$ \text{Stab}(\rho_A)\backslash \text{Stab}(\rho_A|_S)/\text{Stab}(\rho_B).$$

\end{lem}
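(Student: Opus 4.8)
The plan is to unwind the definitions and exhibit the fiber $p_2^{-1}(c_A,c_B)$ as an orbit space of a group action, then recognize that orbit space as the double coset. First I would fix the representation $\rho_S := \rho_A|_S = \rho_B|_S$ once and for all, and describe $p_2^{-1}(c_A,c_B)$ concretely: a point of $\chi(H)$ lying over $(c_A,c_B)$ is the conjugacy class of a pair $(\rho_A', \rho_B')$ with $\rho_A' \sim \rho_A$ (conjugate in $G$), $\rho_B' \sim \rho_B$, and $\rho_A'|_S = \rho_B'|_S$; two such pairs represent the same point of $\chi(H)$ iff they are simultaneously conjugate. The key normalization is that, after conjugating $(\rho_A',\rho_B')$ as a pair, we may assume $\rho_A' = \rho_A$ exactly. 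Having done that, the remaining freedom is a representation $\rho_B' = g\rho_B g^{-1}$ for some $g \in G$, subject to the compatibility constraint $g\rho_B g^{-1}|_S = \rho_A|_S = \rho_S$, i.e. $g \,\rho_S\, g^{-1} = \rho_S$, which says exactly $g \in \mathrm{Stab}(\rho_S) = \mathrm{Stab}(\rho_A|_S)$.

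So the set of pairs over $(c_A,c_B)$ with first coordinate equal to $\rho_A$ is parameterized by $\{g\rho_B g^{-1} : g \in \mathrm{Stab}(\rho_A|_S)\}$, i.e. by the coset space $\mathrm{Stab}(\rho_A|_S)/\mathrm{Stab}(\rho_B)$, since $g$ and $g'$ give the same $\rho_B'$ iff $g^{-1}g' \in \mathrm{Stab}(\rho_B)$. The last step is to quotient by the residual conjugation freedom that preserves the normalization $\rho_A' = \rho_A$: a conjugation by $h \in G$ fixes $\rho_A$ iff $h \in \mathrm{Stab}(\rho_A)$, and such an $h$ acts on the parameter $g\rho_B g^{-1}$ by $g \mapsto hg$. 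Thus $p_2^{-1}(c_A,c_B)$ is identified with $\mathrm{Stab}(\rho_A) \backslash \mathrm{Stab}(\rho_A|_S) / \mathrm{Stab}(\rho_B)$, which is the claim. I would phrase this as: the map $\mathrm{Stab}(\rho_A|_S) \to p_2^{-1}(c_A,c_B)$ sending $g$ to the class of $(\rho_A, g\rho_B g^{-1})$ is well-defined, surjective by the normalization, and its fibers are exactly the orbits of the left $\mathrm{Stab}(\rho_A)$-action and right $\mathrm{Stab}(\rho_B)$-action.

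There are two points requiring a little care, and the main obstacle is the first: justifying that we may really conjugate the pair $(\rho_A', \rho_B')$ so that $\rho_A' = \rho_A$, not merely so that $\rho_A'$ is close to or abstractly conjugate to $\rho_A$. This is immediate set-theoretically — $c_A$ is a single conjugacy class, so some $G$-conjugate of $\rho_A'$ equals $\rho_A$, and conjugating the whole pair by that element does not change the point of $\chi(H)$ it defines. The second point is checking that the identification is a homeomorphism, not just a bijection: for this I would note $G$ is compact, so all orbit spaces in sight are nice, the evaluation-type maps involved are continuous, and a continuous bijection from a compact space (a quotient of the compact group $\mathrm{Stab}(\rho_A|_S)$) to a Hausdorff space is a homeomorphism; alternatively one produces a continuous inverse directly by choosing $g$ continuously, which is where compactness again helps. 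I expect the bijection argument to be entirely routine once the normalization is set up correctly, so the write-up should emphasize the bookkeeping of which group acts on which side and verify the two actions commute, giving the genuine double coset.
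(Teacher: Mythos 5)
Your proof is correct and follows essentially the same strategy as the paper's: parameterize the lifts over $(c_A,c_B)$ by the stabilizer $\mathrm{Stab}(\rho_S)$, then identify the residual conjugation freedom as a left $\mathrm{Stab}(\rho_A)$-action and right $\mathrm{Stab}(\rho_B)$-action. The one cosmetic difference is that you normalize $\rho_A'=\rho_A$ at the outset and so work directly with $\mathrm{Stab}(\rho_S)$, whereas the paper first surjects $G\times\mathrm{Stab}(\rho_S)$ onto $p_3^{-1}(c_A,c_B)$ and then observes that the $G$-factor washes out under the diagonal conjugation; your slicing makes the bookkeeping slightly cleaner but is the same computation.
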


\noindent{\bf Remark.} Lemma \ref{double}  can be applied in  situations when we consider representation spaces and character varieties that place conjugation-invariant conditions on the value that the  representations take on specified elements in the groups $H$, $A$, $B$, and $S$.  For our purposes we will be interested in the traceless representations which arise in the definition of  $R$ and $R^\nat$.  

\medskip
\noindent{\em Proof of Lemma \ref{double}.}
Denote by $\rho_S$ the restriction of $\rho_A$ to $S$. We first identify the fiber   $p_3^{-1}(c_A, c_B)$ with  a quotient of $G\times \text{Stab}(\rho_S)$.  If $(\rho_A',\rho_B')\in p_3^{-1}(c_A, c_B)$  then there are $g_1,g_2\in G$ so that $(\rho_A',\rho_B')=(g_1\rho_Ag_1^{-1}, g_2 \rho_B g_2^{-1}).$  Since $\rho_A'$ and $\rho_B'$ agree on $S$, (as do $\rho_A$ and $\rho_B$) it follows that $g_1^{-1}g_2\in \text{Stab}(\rho_S)$.  Thus the map $G\times \text{Stab}(\rho_S)\to 
p_3^{-1}(c_A, c_B)$ taking $(g,t)$ to $(g\rho_Ag^{-1}, gt\rho_B (gt)^{-1})$ is surjective.

Since  $p_1$ is surjective, it maps $p_3^{-1}(c_A, c_B)$ surjectively to $p_2^{-1}(c_A, c_B)$. Suppose that $(g,t), (g',t')\in G\times \text{Stab}(\rho_S)$. Then $$p_1(g'\rho_Ag'^{-1}, g't'\rho_B(g't')^{-1})=p_1(g\rho_Ag^{-1}, gt\rho_B (gt)^{-1})$$ if and only if there is an $h\in G$ so that 
$$(g'\rho_Ag'^{-1}, g't'\rho_B(g't')^{-1})=(hg\rho_A(hg)^{-1}, hgt\rho_B (hgt)^{-1}).$$  This is equivalent to  $g'^{-1}hg\in  \text{Stab}(\rho_A)$ and $(g't')^{-1}hgt\in  \text{Stab}(\rho_B)$. Writing $a=g'^{-1}hg$ and $b=(g't')^{-1}hgt$ we see that 
$h=g'ag^{-1}$ and $ t'^{-1}at=b$.  It follows that 
$(g,t)$ and $ (g',t')$ correspond to the same element of $p_2^{-1}(c_A,c_B)$ if and only if there exist $a\in \text{Stab}(\rho_A)$ and $b\in \text{Stab}(\rho_B)$ so that $t'=atb^{-1}$.
\qed

 \bigskip

We  now use Lemma \ref{double}   to compare $R(Y,K)$ and $R^\nat(Y,K)$.  To do this,  
write 
\begin{equation}\label{decomptorus}
(Y,K^\nat\cup W)=(Y\setminus N(K),\emptyset)\cup_{\partial N(K)} (N(K), K^\nat\cup W).
\end{equation}

Suppose that $\rho_A\in \chi(Y,K)$ and $\rho_B\in R^\nat(N(K),K)$ map to the same point $\rho_S\in \chi(\partial N(K))$. Conjugate $\rho_B$ so that the restrictions of $\rho_A$ and $\rho_B$ to $\partial N(K)$ agree. Proposition \ref{nbd} shows that $\rho_B\in R^\nat(N(K),K)$ has non-abelian image.  Moreover, since $\rho_B(\mu_K)\in C(\bbi)$, the restriction $\rho_A$ to $Y\setminus N(K)$ of any representation $\rho\in R^\nat(Y,K)$ lies in $R(Y,K)\subset \chi(Y,K)$. Furthermore, the restriction $\rho_S$ to the separating torus $\partial N(K)$ is abelian but non-central, since it sends the meridian $\mu_K$ into $C(\bbi)$.  Thus $\text{Stab}(\rho_S)\cong S^1$.

 From 
Lemma \ref{double} we conclude that fiber of  the restriction map 
\begin{equation}
\label{reseq1}
R^\nat(Y,K)\to R(Y,K)\times_{\chi(\partial N(K))}R^\nat(N(K),K)
\end{equation}
over $\rho_A*\rho_B$ is $$\text{Stab}(\rho_A)\backslash S^1/ \{\pm1\}.$$
This is a single point if the restriction $\rho_A$ of $\rho$ to $Y\setminus N(K)$ has abelian image, and a circle if the restriction has non-abelian image.

Proposition \ref{nbd} asserts that the restriction map $R^\nat(N(K),K)\to\chi(\partial N(K))$ is injective. This allows us to identify the restriction map  of Equation (\ref{reseq1}) with the surjective map
\begin{equation}
\label{reseq2}
R^\nat(Y,K)\to R(Y,K).
\end{equation}
In summary, we have the following.

\begin{prop} \label{circle or point} Every representation in $R^\nat(Y,K)$ has non-abelian image. The forgetful map
$R^\nat(Y,K)\to R(Y,K)$ is a surjection. The fiber 
over a conjugacy class $\rho$ is either a circle or a point, depending on whether 
$\rho$ has non-abelian or abelian image.  \qed
 \end{prop}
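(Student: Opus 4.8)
The plan is to assemble the statement from the amalgamated--product description of character varieties developed above, applied to the torus decomposition (\ref{decomptorus}). I take $A=\pi_1(Y\setminus N(K))$, $B=\pi_1(N(K)\setminus(K^\nat\cup W))$, and $S=\pi_1(\partial N(K))$, so that $\pi_1(Y\setminus(K^\nat\cup W))=A*_S B$, and I regard the traceless and $(-1)$ conditions defining $R^\nat$ as conjugation--invariant conditions on specified elements of $A$, $B$, $S$, so that Lemma \ref{double} applies in the form noted in the remark following it. The first claim, that every $\rho\in R^\nat(Y,K)$ has non--abelian image, is then immediate: the restriction $\rho_B$ of $\rho$ to $N(K)$ lies in $R^\nat(N(K),K)$, which by Proposition \ref{nbd} consists entirely of non--abelian representations, so already $\rho_B$ has non--abelian image, hence so does $\rho$; in particular $\text{Stab}(\rho)=\text{Stab}(\rho_B)=\{\pm1\}$.

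Next I would pin down the stabilizers relevant to the gluing. Because $\rho_B\in R^\nat(N(K),K)$ sends $\mu_K$ into $C(\bbi)$, the restriction $\rho_A$ of $\rho$ to $Y\setminus N(K)$ lies in $R(Y,K)$, and the further restriction $\rho_S$ to $\partial N(K)$ sends $\mu_K$ into $C(\bbi)$; thus $\rho_S$ is abelian but non--central, so $\text{Stab}(\rho_S)\cong S^1$ by Proposition \ref{basic}. Since $\text{Stab}(\rho_A)\subseteq\text{Stab}(\rho_S)\cong S^1$, and a subgroup of $S^1$ occurring as a stabilizer in $SU(2)$ is (by Proposition \ref{basic}) either $\{\pm1\}$ or all of $S^1$, there are exactly two possibilities: either $\rho_A$ has non--abelian image and $\text{Stab}(\rho_A)=\{\pm1\}$, or $\rho_A$ has abelian image --- which cannot be central, since $\rho_A(\mu_K)\in C(\bbi)$ --- and $\text{Stab}(\rho_A)=\text{Stab}(\rho_S)\cong S^1$. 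Feeding this into Lemma \ref{double}, the fiber of the map (\ref{reseq1}) over a gluing $\rho_A*\rho_B$ is the double coset $\text{Stab}(\rho_A)\backslash S^1/\{\pm1\}$, which is $S^1/\{\pm1\}\cong S^1$ in the first case and a single point in the second.

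Finally, for surjectivity and for identifying (\ref{reseq1}) with the forgetful map (\ref{reseq2}): given any $\rho_A\in R(Y,K)$, its restriction to $\partial N(K)$ lies on the circle $S(\bbi)$ of (\ref{tlcircle}) since $\rho_A(\mu_K)\in C(\bbi)$, and by Proposition \ref{nbd} this circle is precisely the image of the \emph{injective} restriction map $R^\nat(N(K),K)\to\chi(\partial N(K))$; hence there is a unique $\rho_B\in R^\nat(N(K),K)$ agreeing with $\rho_A$ on $\partial N(K)$, so every point of $R(Y,K)$ is hit and the same injectivity identifies $R(Y,K)\times_{\chi(\partial N(K))}R^\nat(N(K),K)$ with $R(Y,K)$, turning (\ref{reseq1}) into (\ref{reseq2}). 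The only place where any thought is required is ruling out $\text{Stab}(\rho_A)=SU(2)$ --- that is, checking the meridian condition genuinely forbids a central $\rho_A$; everything else is a direct application of Lemma \ref{double} and Proposition \ref{nbd}, together with the elementary observations that $S^1/\{\pm1\}\cong S^1$ and that $S^1$ acts transitively on itself.
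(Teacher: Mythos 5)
Your proof is correct and follows essentially the same route as the paper: apply Lemma \ref{double} to the decomposition (\ref{decomptorus}), use Proposition \ref{nbd} to force non-abelian image and to identify (\ref{reseq1}) with (\ref{reseq2}), and compute the double coset $\text{Stab}(\rho_A)\backslash S^1/\{\pm1\}$ according to whether $\rho_A$ is abelian. The only addition you make beyond the paper's argument is to spell out that $\text{Stab}(\rho_A)$ cannot be all of $SU(2)$ because $\rho_A(\mu_K)\in C(\bbi)$, a useful explicit check the paper leaves implicit.
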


Lemma \ref{double} is also useful when studying $R^\nat(Y,K)$ via a  decomposition of the pair $(Y,K)$ along a four-punctured 2-sphere. Suppose that 
$Y=Y_1\cup_{S^2}Y_2$ is a decomposition of $Y$ along a 2-sphere $S^2\subset Y$ which intersects $K$ in four points $a,b,c,d$.  We assume that $H\cup W$ lies in the interior of $Y_2$.  Then Lemma \ref{double} allows us to identify $R^\nat(Y,K)$ as a fiber product of $R(Y_1, K_1)$ and $R^\nat(Y_2,K_2)$ over the second  pillowcase $R(S^2,\{a,b,c,d\})$ (where each $K_i=K\cap Y_i$ is a union of two arcs). Precisely, there is a restriction map 
\begin{equation}
\label{decompS2}
R^\nat(Y,K)\to R(Y_1,K_1)\times_{R(S^2,\{a,b,c,d\})}R^\nat(Y_2,K_2)
\end{equation}
whose fiber over $\rho_1*\rho_2$ is 
$$\Stab(\rho_1)\backslash \Stab(\rho_{S^2})/\Stab(\rho_2).$$
Now $\Stab(\rho_2)=\pm 1$, since $H\cup W$ is contained in $Y_2$. The stabilizer of any non-singular point in the pillowcase $ R(S^2,\{a,b,c,d\})$ is just the center $\pm 1$, so the fiber  of the map in Equation  (\ref{decompS2}) over a pair $(\rho_1,\rho_2) $ is a single point  if the restriction to $S^2$ is not one of the four 
abelian conjugacy classes in $ R(S^2,\{a,b,c,d\})$.

 However, at one of the four abelian conjugacy classes, the stabilizer  is $S^1$, and hence the fiber of the map in Equation  (\ref{decompS2}) is 
$$\Stab(\rho_1)\backslash S^1,$$
which is a circle if $\rho_1$ is non-abelian and a single point if $\rho_1$ is abelian.
In summary:

\begin{prop} \label{natcase} Suppose that $S^2\subset Y$ is a separating 2-sphere meeting the knot $K$ transversely in four points $a,b,c,d$  and that $H\cup W$ lies in $Y_2$. Restricting to the two pieces in the decomposition $Y=Y_1\cup_{S^2} Y_2$ gives a surjection $$R^\nat(Y,K)\to R(Y_1,K_1)\times_{R(S^2,\{a,b,c,d\})}R^\nat(Y_2,K_2)$$ whose fibers are single points, with the exception of the fibers over the non-abelian  representations in $R(Y_1,K_1)$ which  restrict to one of the four abelian (corner) points in  $R(S^2,\{a,b,c,d\})$.  The fiber above these latter points  is a circle.
   \qed
 \end{prop}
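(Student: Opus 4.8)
The plan is to deduce Proposition \ref{natcase} almost entirely from Lemma \ref{double}, applied to the free product with amalgamation
$$\pi_1(Y\setminus(K^\nat\cup W))=\pi_1(Y_1\setminus K_1)*_{\pi_1(S^2\setminus\{a,b,c,d\})}\pi_1(Y_2\setminus(K_2^\nat\cup W)),$$
using the Seifert--van Kampen theorem along the four-punctured sphere $S^2$ (which is where the hypothesis $H\cup W\subset Y_2$ enters: the decomposition of the \emph{link} complement respects the splitting because the earring and arc are confined to one side). One subtlety to address first is that Lemma \ref{double} as stated concerns the full $\Hom$ and $\chi$ functors, whereas here we impose the conjugation-invariant traceless/central conditions $\rho(\mu_K),\rho(\mu_H)\in C(\bbi)$ and $\rho(\mu_W)=-1$; the Remark following the lemma flags exactly this, so the first step is to note that these conditions cut out $G$-invariant subspaces of the relevant $\Hom$-spaces on each piece and the lemma's proof goes through verbatim on these subspaces. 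This identifies $R^\nat(Y,K)$ with the fiber product $R(Y_1,K_1)\times_{R(S^2,\{a,b,c,d\})}R^\nat(Y_2,K_2)$ together with a description of the fibers of the map to that fiber product as double coset spaces $\Stab(\rho_1)\backslash\Stab(\rho_{S^2})/\Stab(\rho_2)$.

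The next step is to evaluate the three stabilizers. By Proposition \ref{nbd} (or rather its proof, which shows every representation in $R^\nat(N(K),K)$ is non-abelian), any $\rho_2\in R^\nat(Y_2,K_2)$ has non-abelian image since $Y_2$ contains $H\cup W$ and the relation $[\mu_K,\mu_H]=\mu_W$ forces $\rho_2(\mu_W)=-1\ne 1$; hence $\Stab(\rho_2)=\{\pm1\}$ by Proposition \ref{basic}(1). For $\rho_{S^2}$, Proposition \ref{s2prop} tells us the stabilizer is $\{\pm1\}$ at every point of the pillowcase except the four corners, where the representation is reducible non-central and the stabilizer is a circle $\{e^{tQ}\}$. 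So away from the corner points the double coset is $\{\pm1\}\backslash\{\pm1\}/\{\pm1\}$, a single point, regardless of $\rho_1$. Over a corner point the double coset becomes $\Stab(\rho_1)\backslash S^1/\{\pm1\}$; since $-1\in\Stab(\rho_1)$ always, quotienting further by $\{\pm1\}$ on the right is trivial, and this reduces to $\Stab(\rho_1)\backslash S^1$, which is a single point when $\rho_1$ is abelian (so $\Stab(\rho_1)\supseteq S^1$) and a circle when $\rho_1$ is non-abelian (so $\Stab(\rho_1)=\{\pm1\}$).

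Finally, surjectivity of the restriction map is inherited from the surjectivity of $p_1$ (hence of $p_2$) in the diagram preceding Lemma \ref{double}, applied to the invariant subspaces: any compatible pair $(\rho_1,\rho_2)$ lifts to an honest representation of the amalgamated product satisfying all the traceless and central constraints, since those constraints are imposed on elements living in the individual pieces. Assembling these computations gives precisely the statement: the map is a surjection with point fibers everywhere except over those non-abelian $\rho_1\in R(Y_1,K_1)$ whose restriction to $S^2$ lands in one of the four abelian corner points, where the fiber is a circle. \emph{The main obstacle} I anticipate is not any deep difficulty but rather the bookkeeping in the first step: one must be careful that the conjugation used to align restrictions on $S^2$ (as in the proof of Lemma \ref{double}) is compatible with the normalizations implicit in the traceless conditions, and that the identification of $\pi_1$ of the link complement as an amalgamated product genuinely places $\mu_W$ and $\mu_H$ on the $Y_2$ side so that the constraint $\rho(\mu_W)=-1$ is a condition purely on $\rho_2$; once this is set up cleanly the rest is the routine stabilizer computation above.
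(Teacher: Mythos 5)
Your proposal matches the paper's argument precisely: the paper establishes Proposition \ref{natcase} in the discussion immediately preceding its statement by applying Lemma \ref{double} along the four-punctured $S^2$ and computing the three stabilizers exactly as you do. Your additional remarks --- justifying $\Stab(\rho_2)=\{\pm1\}$ via the relation $[\mu_K,\mu_H]=\mu_W\mapsto -1$, and noting that the right $\{\pm1\}$ factor in the double coset is absorbed because $-1\in\Stab(\rho_1)$ --- fill in small steps the paper leaves implicit, but the route is the same.
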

In the examples below we will take $Y_2$ to be a 3-ball intersecting $K$ in a pair of unknotted arcs. We will explain, for torus and 2-bridge knots,  how to choose the 3-ball judiciously so that the restriction of every non-abelian representation in $R(Y_1,K_1)$  to $R(S^2,\{a,b,c,d\})$ avoids the corner points. The map of Proposition \ref{natcase} then has point fibers. 
Then a perturbation will be applied to make $R^\nat(Y_2,K_2)$ generic, in the sense that the fiber product in Proposition \ref{natcase} has finitely many points, corresponding to a finite intersection of $R(Y_1,K_1)$ and $R^\nat(Y_2,K_2)$ in $R(S^2\{a,b,c,d\})$.  These points will provide a finite generating set for the reduced singular instanton chain complex.

 \section{Knots with simple representation varieties}  \label{simple}

Much is known about the spaces $\chi(Y,K)$ for various $(Y,K)$, starting with Klassen's influential article \cite{Klassen}.   
The image of $\chi(S^3,K)$   in the pillowcase $\chi(\partial N(K))$ is also a well studied space: for example, it appears as part of the real locus of the $A$-polynomial of \cite{CCGLS}. The identification  of the algebraic count of the intersections of this image with the circle $S(\bbi) $ of Equation (\ref{tlcircle}), corresponding to the line $\{x=\frac{\pi}{2}\}$ (and more generally circles corresponding to the line $\{x=\theta\}$ for $\theta\in [0,\pi]$) with   Levine-Tristram knot signatures is explained in \cite{Lin, Herald2, Heusener}.

\medskip
\begin{df} \label{SRV}
We say  a knot $K$ in $S^3$ has a {\em simple representation variety}, or   {\em $\chi(S^3,K)$ is generic}  if  $\chi(S^3,K)$ is homeomorphic to a 1-complex made out of 
\begin{enumerate}
\item an arc of abelian representations, parameterized by 
$$t\in [0,\pi]\mapsto \alpha_t:\pi_1(S^3\setminus N(K))\to H_1(S^3\setminus N(K))\to SU(2),~ \alpha_t(\mu_K)=e^{t\bbi}.$$
\item a finite number of smooth arcs of  representations 
$$n_i:[0,1]\to \Hom(\pi_1(S^3\setminus N(K)),SU(2)), ~i=1,\cdots , k$$ whose interior points are non-abelian and whose endpoints $n_i(0), n_i(1)$  equal $\alpha_{s_{i,0}}$ and $\alpha_{s_{i,1}}$ for a pairwise distinct 
set of points $s_{1,0}, s_{1,1}, s_{2,0}, s_{2,1}, \cdots, s_{k,0}, s_{k,1}\in [0,\pi]$. 
\item a finite number of disjoint smooth circles 
$$c_i:S^1\subset \chi(S^3,K),~i=1,\cdots, m$$
of non-abelian representations disjoint from the arcs $\alpha_t$ and $n_i$. 
\item The restriction map $\chi(S^3,K)\to\chi(\partial N(K))$ restricts to an immersion on each arc   $\alpha_t, n_i$ and each circle $c_i$. This immersion is transverse to the circle $S(\bbi) $ of  representations which are traceless on the meridian (Equation (\ref{tlcircle})).
\end{enumerate}
 
\end{df}

 The points $s_{1,0}, s_{1,1},   \cdots, s_{k,0}, s_{k,1}$ are called {\em bifurcation points}, since they correspond to places where the irreducible representations bifurcate from the abelian representations.  This implies that the roots of the Alexander polynomial of $K$ which lie on the unit circle are those of the form $e^{2s_{i,j}\bbi}$, and are simple and distinct \cite{Klassen,Herald2}.

Not all knots have simple representation varieties, but the results of \cite{herald,Herald2} imply that  perturbation data (as explained below)
$\pi$ can be found so that the perturbed moduli space $\chi_{\pi}(S^3,K)$ has  the properties listed above. For small perturbations, the distinct points $s_{1,0}, s_{1,1},\cdots, s_{k,0}, s_{k,1}$ are close, but not necessarily equal to, the  set of roots of the Alexander polynomial.

\subsection{Torus knots }\label{toruschi} All torus knots have simple representation varieties. In fact $\chi(S^3,T_{p,q})$ is a union of an arc $\alpha_t$ of abelian representations as described above, and $\frac{(p-1)(q-1)}{2}$   arcs of non-abelian representations that limit to abelian representations at roots of the Alexander polynomial  \cite{Klassen}.  We review some of the details.

The $(p,q)$ torus knot  group has presentation $\langle x,y~| x^p=y^q\rangle$. In this presentation $\mu_K=x^sy^r$ and $\lambda_K=x^p(x^sy^r)^{-pq}$ where $pr+qs=1$ (see e.g.~\cite[Proposition 3.28]{burde}). The arc of conjugacy classes of abelian representations is parameterized by $\alpha_t(\mu_K)=e^{t\bbi}, ~t\in[0,\pi]$.  In terms of the generators $x$ and $y$ we have $\alpha_t(x)=e^{qt\bbi}$ and $\alpha_t(y)=e^{pt\bbi}$.

Since $x^p=y^q$ is central,  any non-abelian representation $\rho$ will send $x^p=y^q$ to $\pm1$. Hence $x$ is sent to a $p$th root of $\pm1$ in $SU(2)$ and $y$ is sent to a $q$th root of $\pm1$.
If $\rho$ is conjugated so that $\rho(\mu_{K})=e^{m\bbi}$ for some $m\in \RR$, then $\rho(\lambda_K)=\rho(x^{p}\mu_{K}^{-pq})=\pm e^{-pqm\bbi}$. Hence the image of the non-abelian part of $\chi(S^3,K)$ in the pillowcase lies in the straight lines $\ell=-pqm +k\pi$ (via the quotient $\RR^2\to \chi(T)$, taking $(m,\ell)$ to the conjugacy class of $\mu_K\mapsto e^{m\bbi}, \lambda_K\mapsto e^{\ell\bbi}$).  In fact the images in the pillowcase are embedded arcs that start and end at $\alpha_{s_i}$ where $\Delta_K(e^{2s_i\bbi})=0$ (see Theorem 19 of \cite{Klassen} and the discussion which follows its proof). 

 Each arc  of non-abelian representations is completely determined by the choice of $p$th and $q$th root of $\pm1$.   Explicitly, given a pair $a,b$ of integers of the same parity, then the assignment
\begin{equation}
\label{torusarcs}
x\mapsto e^{a\pi\bbi/p},~ y\mapsto \cos(b\pi/q) + \sin(b\pi/q)(\cos(u)\bbi+\sin(u)\bbj),~u\in[0,\pi]
\end{equation}
defines an arc of representations whose projection to the space of conjugacy classes is 1-1. Conversely, given any representation $\rho$, 
 $\rho(x)$ can be conjugated to $e^{a\pi\bbi/p}$ and $\rho(y)$ can be conjugated to $e^{b\pi\bbi/q}$ for some choice of integers of the same parity, and some choice of conjugating elements.  It follows that  any representation can be  conjugated to a unique point  on the corresponding path of Equation (\ref{torusarcs}): we first conjugate $\rho$ so that $\rho(x)$ has the desired image, and then further conjugate by an element in the circle through  $\rho(x)$ so that $y$ has the form of \eqref{torusarcs}. This sets up a bijection between arcs of non-abelian representations and pairs $(a,b)$ of the same parity satisfying $0<a<p$ and $0<b<q$.  For more details see \cite[Theorem 1]{Klassen}.

The endpoints of this arc are abelian, and hence lie on the arc $\alpha_{t}$.  These endpoints are determined by computing where they send $\mu_K$. When $u=0$
the pair $(\mu_K,\lambda_K)$ is sent to $(e^{(as/p + br/q)\pi\bbi}, e^{pr(a-b)\pi\bbi})$.
Hence the endpoint with $u=0$ equals $\alpha_{s_0}$ where $(s_0,0)$ and $((as/p + br/q)\pi, pr(a-b)\pi)$ in $\RR^2$ map to the same point in the pillowcase. Similarly one determines where the endpoint with $u=\pi$ is sent. 

These considerations suffice to show that all torus knots have simple representation varieties and to completely determine $\chi(S^3,K)$ and its image in the pillowcase.  We illustrate two examples. 

The simplest example is the trefoil knot, i.e. the $(2,3)$ torus knot. It has one arc of non-abelian representations, attached to the arc of abelian representations at the points $s_0=\tfrac{\pi}{6}$ and $s_1=\tfrac{5\pi}{6}$, corresponding to the fact that $\Delta_K(e^{2s_i})=0$, $i=0,1$. This arc of non-abelian representations is sent to an arc of slope $-6$ in the pillowcase.  Notice that the map $\chi(S^3,K)\to \chi(\partial N(K))$ fails to be injective at precisely the two points   in the pillowcase sent to the circle $S(\bbi) $ of Equation (\ref{tlcircle}).  In particular, $R(S^3,K)$ consists of exactly two points,  the abelian representation $\alpha_{\pi/2}$, and the non-abelian representation corresponding to $u=\pi/2$.

  \begin{figure}
\begin{center}
\def\svgwidth{2.5in}
 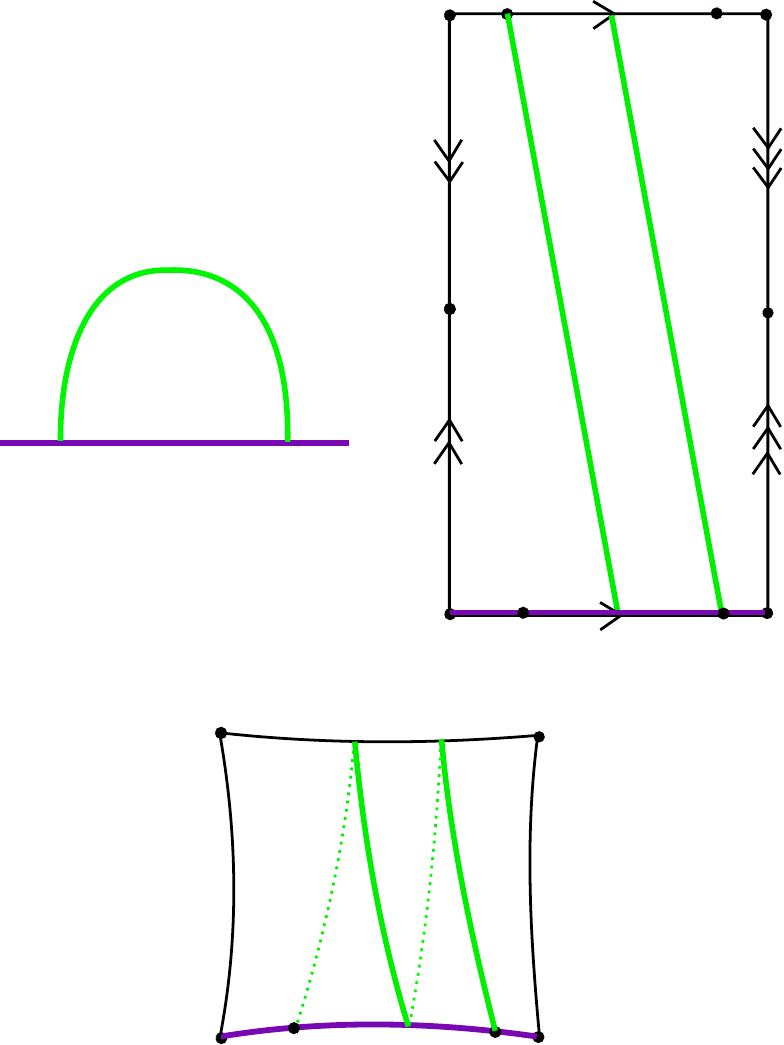
 \caption{$ \chi(S^3, T_{2,3})$\label{figtrefoil}}
\end{center}
\end{figure}

Next consider the $(3,5)$ torus knot. Its Alexander polynomial is
$\Delta_K(t)={t}^{8}-{t}^{7}+{t}^{5}-{t}^{4}+{t}^{3}-t+1$.  Thus $\Delta_K(e^{2s_i})=0$   
when $s_i=\alpha_{\pi/15}, \alpha_{2\pi/15}, \alpha_{4\pi/15}, \alpha_{7\pi/15}, \alpha_{8\pi/15}, \alpha_{11\pi/15},$
$ \alpha_{13\pi/15} ,$ and $ \alpha_{14\pi/15}$. These form the endpoints of the   four arcs of non-abelian representations, determined by the four possible choices $(a,b)=(1,1), (1,3), (2,2)$ and $(2, 4)$.  One calculates that the first arc has endpoints $\{\alpha_{\pi/15},\alpha_{11\pi/15}\}$, the second $\{\alpha_{7\pi/15},\alpha_{13\pi/15}\}$, the third $\{\alpha_{2\pi/15},\alpha_{8\pi/15}\}$, and the fourth $\{\alpha_{4\pi/15},\alpha_{14\pi/15}\}$. 
The restriction $\chi(S^3,K)\to \chi(\partial N(K))$ is far from injective, although it is injective when restricted to each arc of non-abelian representations. In this example, the space $R(S^3,K)$ consists of five points: the abelian $\alpha_{\pi/2}$, and the four midpoints of the non-abelian arcs, two sent to the point with ($\RR^2$) coordinates $(\tfrac{\pi}{2},-\frac{\pi}{2})$, and two sent to the point with coordinates  $(\tfrac{\pi}{2},\frac{\pi}{2})$. In particular, this implies that these traceless representations are not binary dihedral, since the longitude is not sent to $\pm 1$. We illustrate this in Figure \ref{figchi(3,5)}, where we have only drawn the image of the first arc in the pillowcase for clarity.

  \begin{figure}
\begin{center}
 \def\svgwidth{3.2in}
 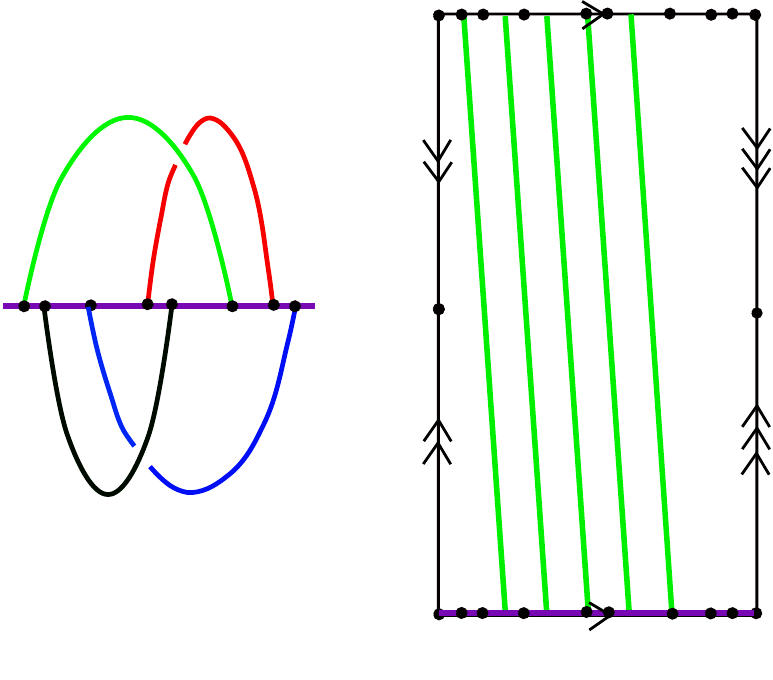
 \caption{$\chi(S^3, T_{3,5}) $\label{figchi(3,5)}}
\end{center}
\end{figure}

The representation variety $\chi(S^3,K)$ for $K$ a general $(p,q)$ torus knot consists of an arc $\alpha_t$ of abelian representations and  $\frac{(p-1)(q-1)}{2}$ arcs  of non-abelian representations with distinct endpoints $\alpha_{s_i}$ on $\alpha_t$, parameterized by pairs of integers $(a_{i},b_{i})$ in Equation (\ref{torusarcs}) above. 

For the purposes of describing the image of each arc in the pillowcase, it is more convenient to parameterize  the paths by their endpoints, rather than the pairs $(a_i
, b_{i})$. Thus  $\chi(S^3,K)$  for $K$ any $(p,q)$ torus knot and its image in the pillowcase $\chi(\partial N(K))$ is completely described by the data 
consisting of the pair $(p,q)$ and an additional $\frac{(p-1)(q-1)}{2}$ pairs
$(c_i,d_i), ~i=1,\cdots ,\frac{(p-1)(q-1)}{2},~ (c_i<d_i)$ of integers determined by the requirement that the endpoints of the $i$th arc of non-abelian representations are $\frac{c_i\pi}{pq}$ and $\frac{d_i\pi}{pq}$; these satisfy $\Delta_K(e^{ \frac{2c_i\pi}{pq}})=0= \Delta_K(e^{ \frac{2d_i\pi}{pq}})$. The pairs $(c_{i},d_{i})$ determine and are determined by the $(a_{i},b_{i})$, but the relationship is awkward to describe explicitly, and so we will use $(c_{i},d_{i})$ to prescribe each arc of non-abelians.

The subvariety $R(S^3,K)$ of $\chi(S^3,K)$ contains $|\sigma(K)|/2+1$ points where $\sigma(K)$ denotes the signature of $K$ \cite{Lin, Herald2, Heusener}. One of these points is $\alpha_{\pi/2}$, the rest correspond to intersections of some of the arcs of non-abelian representations with the circle $S(\bbi) $  of Equation (\ref{tlcircle}).

 The image of the arc of abelians $\alpha_t$ in the pillowcase has slope $0$, and each    arc of non-abelian representations maps to a line of slope $-pq$.  Hence, once we know their endpoints, we know how they map to the pillowcase. In particular, there is always one abelian traceless representation $\alpha_{\pi/2}$, and one non-abelian traceless representation for each pair 
$(c_i, d_i)$ as above satisfying $c_i<\frac{pq}{2}<d_i$.

For example, for the trefoil $T_{2,3}$, the data $(c_i, d_i)$ is just
 $  (1,5)$ and so $R(S^3, T_{2,3})$ has one abelian and one non-abelian traceless representations.  
For $K=T_{3,5}$, the data is $$(1,11), (7,13), (2,15), (4,14)$$ and so $R(S^3, T_{3,5})$ has one abelian and four non-abelian traceless representations, since each corresponding interval contains $\frac{15}{2}$. The data clearly determines the information in Figure \ref{figchi(3,5)}.

 In general, not every arc of non-abelian representations contains a point of $R(S^3,K)$: the inequality $|\sigma(K)|/2\leq (p-1)(q-1)/2$ may be strict.  For example, the data for the $(3,7)$-torus knot is $$
 (1,13), (11,17), (5,19), (2,16), (4,10), (8,20)$$
 and so the arcs determined by the pairs $(11,17)$ and $(4,10)$ do not contain traceless representations. This corresponds to the fact that 
$\sigma/2=4$.
Similarly, the data  for the $(4,9)$-torus knot is
$$(1,17), (15,33), (23,31), (7,25) , (2,34) , (14,22) , (6,30) , (10,26) , (19,35), (3,21) , (5,13), (11,29) 
$$
and 
$\sigma/2=8$.
It is known that the signature of a non-trivial  torus knot is always non-zero (see. e.g.~\cite{Kirk-Livingston-mutation}) from which it follows that $R(S^3,K)$ always contains at least one   point on a non-abelian arc. A much deeper result is the theorem of Kronheimer-Mrowka \cite{KM0} that every non-trivial knot in $S^3$ admits a non-abelian traceless representation.

Applying Proposition \ref{circle or point} we conclude that for $K$ the $(p,q)$ torus knot, $R^\nat(S^3,K)$ consists of one isolated point $\alpha$, corresponding to $\alpha_{\pi/2}$, and $|\sigma(K)|/2$ circles, one for each irreducible traceless representation of $\pi_1(S^3\setminus K)$.

\subsection{2-bridge knots}
We recall some facts about $\chi(S^3,K)$ when $K$ is a  2-bridge knot. Klassen \cite{Klassen} identified the spaces $\chi(S^3,K)$ for the twists knots.  Building on work of Riley \cite{Riley} (who considered  $SL(2,\CC)$ representations), Burde \cite{burde} determined $\chi(S^3,K)$ for all 2-bridge knots.  In contrast to torus knots, the image of $\chi(S^3,K)$ in the pillowcase is not given by linear equations, but rather by more complicated polynomial equations.   However, we will give   explicit (and linear)  equations which determine the {\em traceless} representation varieties $ R(S^3,K)$ and $R^\nat(S^3,K)$ for any 2-bridge knot in Section \ref{twobridgesection} below.

For the $m$-twist knot  (this is the 2-bridge knot corresponding to the continued fraction expansion $[1,1,m]$), $\chi(S^3,K)$ is a union of the arc $\alpha_t$ of abelian representations, $[\tfrac{m}{2}]$ circles of non-abelian representations, and, if $m$ is odd, one arc of non-abelian representations with endpoints on the arc $\alpha_t$ corresponding to the two roots of the Alexander polynomial on the unit circle.   Each circle contributes two points to $R(S^3,K)$, and the arc of non-abelian representations (when $m$ is odd) meets $R(S^3,K)$ in one point. As for all knots, the abelian representation $\alpha_{\pi/2}$ lies in $R(S^3,K)$.   

Thus,  for $K$ the $m$-twist knot, $R(S^3,K)$ consists of one abelian representation, and $2[\frac{m}{2}]=m$ non-abelian representations if $m$ is even, and 
$2[\frac{m}{2}]+1=m$ non-abelian  representations if $m$ is odd.  Using Proposition
\ref{circle or point} it follows that $R^\nat(S^3,K)$ consists of one isolated point $\alpha$ and $m$ circles.

For example, when $K$ is the  Figure 8 knot, corresponding to $m=2$, $\chi(S^3,K)$ consists of the arc $\alpha_t$ of abelian representations and a disjoint circle of non-abelian representations. A construction of this circle and arc, together with an explicit description of the image of $\chi(S^3,K)\to \chi(\partial N)$ can be found in Proposition 5.4 of \cite{Kirk-Klassen-CS1}.  From this one concludes that $R(S^3,K)$ consists of exactly three points: the arc $\alpha_t$ contributes the point $\alpha_{\pi/2}$ to $R(S^3,K)$, and the circle contributes two points to $R(S^3,K)$. This identifies $R^\nat(S^3,K)$ as the union of an isolated point $\alpha$ and two circles.

We illustrate the more complicated example of the $m$ twisted double of the unknot when $m=5$, this is  the knot $7_2$ in the knot tables. The space  $\chi(S^3,K)$ consists of two circles and one arc of non-abelians, and the arc $\alpha_t$ of abelians. 
The Alexander polynomial is $3-5t+3t^3$, with roots $(5\pm \sqrt{-11})/6\approx e^{(0.1)2\pi \bbi}$.  
The arc of non-abelians is embedded, but the two circles of non-abelians are immersed in the pillowcase with one transverse double point at $(m,\ell)=(\tfrac{\pi}{2}, 0)$.   The proof of these facts can be found in Burde's article \cite{burde}.

In particular, the space $R(S^3,K)$ contains six points: the abelian point $\alpha_{\pi/2}$, the midpoint of the arc of non-abelians, and two points on each of the two circles of non-abelians. Each of these six points is mapped to the point $(\tfrac{\pi}{2}, 0)$ in the pillowcase. The space $R^\nat(S^3,K)$ thus consists of one isolated point $\alpha$ corresponding to $\alpha_{\pi/2}$ via Proposition \ref{circle or point} and five circles corresponding to the five non-abelian representations in $R(S^3,K)$.

  \begin{figure}
\begin{center}
 \includegraphics[width=350pt]{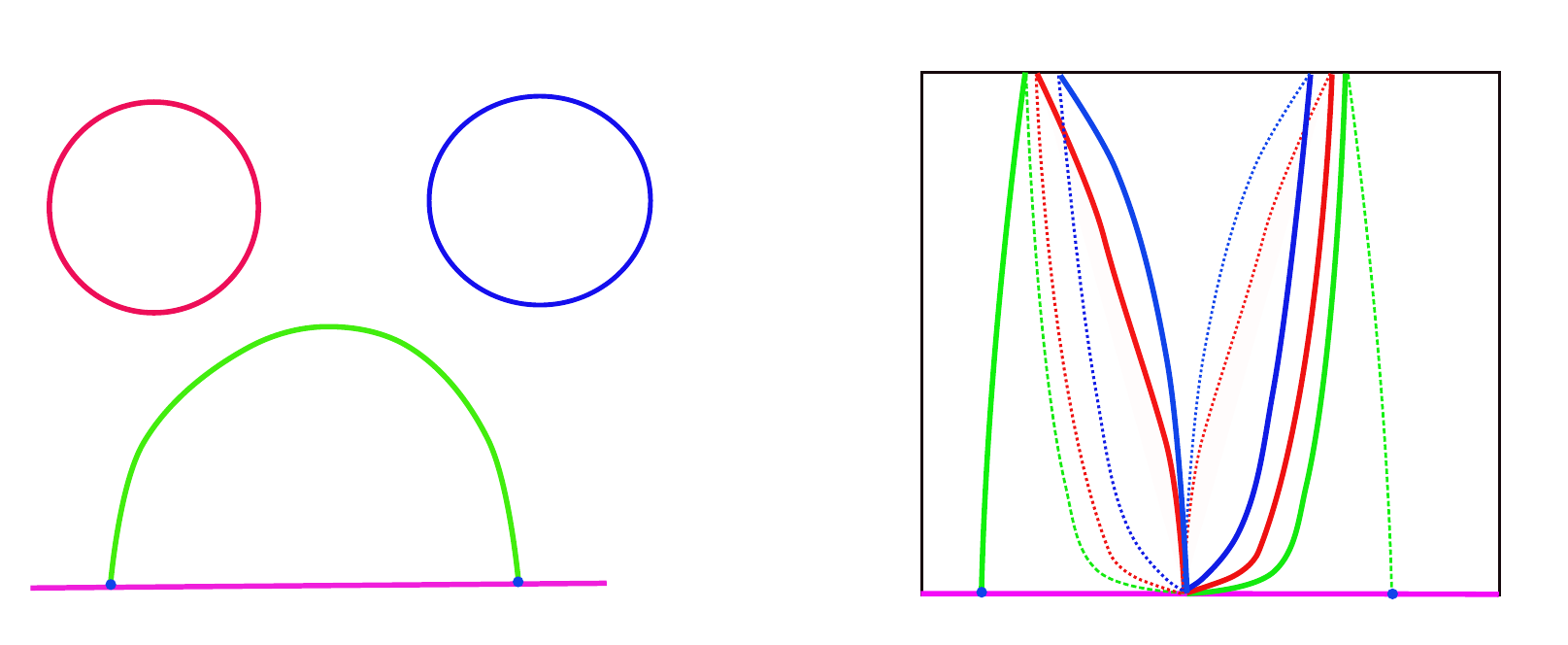}
 \caption{$\chi(S^3,K)$ and its image in $\chi(\partial N(K))$ for $K=7_2$.\label{figchi(7-2)}}
\end{center}
\end{figure}

 \section{Traceless representations of certain tangles}\label{2arcsinaball}
 
Having established the  presence of circles in $R^\nat(Y,K)$ in Proposition \ref{circle or point}, we are faced with the problem that this set is never finite (except for the unknot) and yet should give a generating set for the reduced instanton homology of $(Y,K)$, since it appears as the critical set of the relevant Chern-Simons functional.   The circles arise via the mechanism of Lemma \ref{double} because of  the presence of the   torus (i.e~ the boundary of the tubular neighborhood of $K$ which contains $H\cup W$) along which we can bend a representation. Holonomy perturbations  of the Chern-Simons functional (described below) are used to correct this problem.   We will show below that the circles can be eliminated by using a holonomy perturbation that lies in a 3-ball   intersecting $K$ in two unknotted arcs and   containing $H\cup W$. 
Thus we turn our attention to the representation spaces corresponding to  a pair of arcs in a ball.

\subsection{The space $R(B^3,A_1\cup A_2)$}  Consider the space of conjugacy classes of $SU(2)$ representations $R(B^3, A_1\cup A_2)$ of the complement of a pair of unknotted arcs in a 3-ball which send their meridians to elements in $C(\bbi)$, as illustrated in Figure \ref{2arcsfig}.  This corresponds exactly to the subspace  of $R(S^2,\{a,b,c,d\})$ consisting of those representations  which satisfy $\rho(a)=\rho(d)$ and $\rho(b)=\rho(c)$.  

 \begin{figure}  \centering
\def\svgwidth{2.2in}
 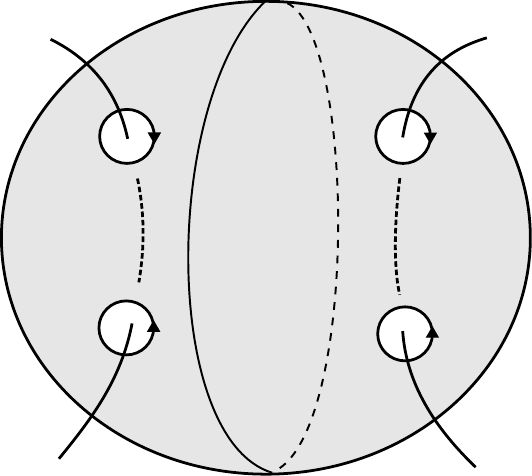
 \caption{ Two arcs in a $3$--ball,\label{2arcsfig} $(B^3,A_1\cup A_2)$. }
\end{figure}

We have the simple observation

\begin{prop}\label{unpert0}
 The space $ R(B^3, A_1\cup A_2)$ can be identified with the  arc $[0,\pi]$ via 
 $$a\mapsto \bbi, ~b\mapsto e^{\gamma\bbk}\bbi, ~c\mapsto e^{\gamma\bbk}\bbi, ~d\mapsto \bbi$$ for $\gamma\in [0,\pi]$. The restriction map
$$ R(B^3, A_1\cup A_2)\to R(S^2,\{a,b,c,d\})$$ is injective, with image   the diagonal arc  $\psi(\gamma,\gamma),~\gamma\in [0,\pi]$.  \qed
\end{prop}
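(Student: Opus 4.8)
The plan is to describe $\pi_1(B^3\setminus(A_1\cup A_2))$ explicitly and feed it through Proposition \ref{s2prop}. Since the two arcs are unknotted and unlinked in the ball, the complement deformation retracts onto a wedge of two circles, so $\pi_1(B^3\setminus(A_1\cup A_2))$ is free of rank $2$, generated by meridians of $A_1$ and $A_2$. Under the inclusion $(S^2,\{a,b,c,d\})\hookrightarrow (B^3,A_1\cup A_2)$, the four boundary loops $a,b,c,d$ of Figure \ref{fig1.1} map to these meridians: consulting how the arcs connect the four punctures (with the relation $ba=cd$ coming from $\pi_1(S^2\setminus\{a,b,c,d\})$), the arc $A_1$ joins the punctures labeled $a$ and $d$ while $A_2$ joins $b$ and $c$, so the inclusion-induced map on $\pi_1$ identifies $a$ with $d$ and $b$ with $c$. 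Hence a representation $\rho$ of $\pi_1(S^2\setminus\{a,b,c,d\})$ extends over $B^3\setminus(A_1\cup A_2)$ precisely when $\rho(a)=\rho(d)$ and $\rho(b)=\rho(c)$, and every such extension is unique since the meridians generate; this is exactly the parenthetical claim already made before the proposition.

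Next I would intersect this condition with the parametrization $\psi$ of Proposition \ref{s2prop}. Writing $\psi(\gamma,\theta):a\mapsto\bbi,\ b\mapsto e^{\gamma\bbk}\bbi,\ c\mapsto e^{\theta\bbk}\bbi,\ d\mapsto e^{(\theta-\gamma)\bbk}\bbi$, the constraint $\rho(a)=\rho(d)$ reads $\bbi=e^{(\theta-\gamma)\bbk}\bbi$, forcing $e^{(\theta-\gamma)\bbk}$ to commute with $\bbi$; since $e^{s\bbk}$ commutes with $\bbi$ only for $s\in\pi\ZZ$, we get $\theta-\gamma\in\pi\ZZ$. The constraint $\rho(b)=\rho(c)$ reads $e^{\gamma\bbk}\bbi=e^{\theta\bbk}\bbi$, i.e.\ $e^{(\theta-\gamma)\bbk}=1$, giving $\theta-\gamma\in 2\pi\ZZ$. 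Thus the two constraints together are equivalent to $\theta\equiv\gamma\pmod{2\pi}$, which in the pillowcase $\RR^2/G$ is exactly the diagonal $\psi(\gamma,\gamma)$. Choosing the fundamental-domain representative $\gamma\in[0,\pi]$ (using the relation $(\gamma,\theta)\sim(-\gamma,-\theta)$ to fold $[\pi,2\pi]$ back) then identifies $R(B^3,A_1\cup A_2)$ with $[0,\pi]$ via $a\mapsto\bbi,\ b=c\mapsto e^{\gamma\bbk}\bbi,\ d\mapsto\bbi$, and the restriction map is the embedding $\gamma\mapsto\psi(\gamma,\gamma)$.

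Injectivity of the restriction map is then immediate: two values of $\gamma\in[0,\pi]$ give conjugate representations of $\pi_1(S^2\setminus\{a,b,c,d\})$ only if $(\gamma,\gamma)\sim(\gamma',\gamma')$ under $G$, and on the diagonal within $[0,\pi]^2$ the $G$-action is free away from the corners, so $\gamma=\gamma'$; and a conjugacy of the $B^3$-representations restricts to a conjugacy of the $S^2$-representations, so the two notions of ``conjugacy class'' agree. The only mild subtlety — and the one place to be slightly careful — is making sure the reduction $\theta-\gamma\in 2\pi\ZZ$ is genuinely forced rather than merely allowed by the pillowcase identifications, i.e.\ distinguishing the relation $\rho(b)=\rho(c)$ as group elements from the weaker statement that $(\gamma,\theta)$ and $(\gamma,\gamma)$ land at the same point of $R(S^2,\{a,b,c,d\})$; but since $\rho(b)=\rho(c)$ is an honest equation of quaternions in the chosen normalization with $\rho(a)=\bbi$, this is straightforward. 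Everything else is a routine unwinding of Proposition \ref{s2prop}, which is why the statement can be left with a \texttt{\char`\\qed} at the end of the proposition as in the excerpt.
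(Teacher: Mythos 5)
Your argument is correct and follows the same route the paper intends: identify $R(B^3,A_1\cup A_2)$ with the subspace $\{\rho(a)=\rho(d),\ \rho(b)=\rho(c)\}$ of $R(S^2,\{a,b,c,d\})$ (stated explicitly in the sentence preceding the proposition) and then unwind the parametrization $\psi$ of Proposition \ref{s2prop}; the paper itself leaves this as immediate with a \verb|\qed|. One small simplification: the equation $\bbi = e^{(\theta-\gamma)\bbk}\bbi$ already gives $e^{(\theta-\gamma)\bbk}=1$ directly by right-cancelling $\bbi$, so $\theta-\gamma\in 2\pi\ZZ$ follows from $\rho(a)=\rho(d)$ alone --- the detour through ``$e^{(\theta-\gamma)\bbk}$ commutes with $\bbi$'' (which only yields $\theta-\gamma\in\pi\ZZ$) and the subsequent appeal to the second constraint is unnecessary, though harmless since your final conclusion is the same.
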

\noindent Proposition \ref{unpert0}  is illustrated in Figure \ref{fig4}, where $R(S^2,\{a,b,c,d\})$ is represented as an identification space obtained from the rectangle $[0,\pi]\times [0,2\pi]$.

 \begin{figure}
\begin{center}
\def\svgwidth{2in}
 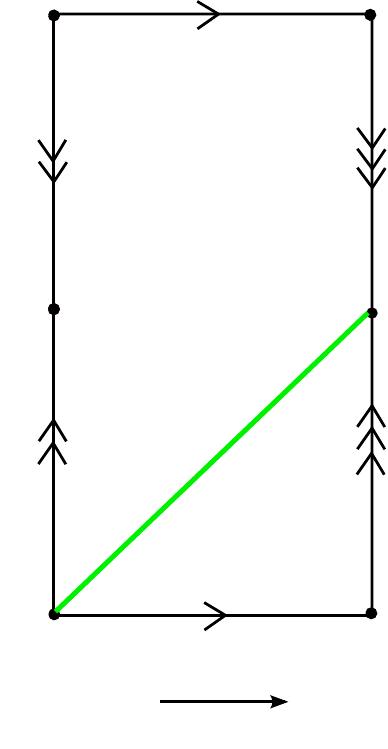
 \caption{The image of the restriction map $R(B^3, A_1\cup A_2)\to R(S^2,\{a,b,c,d\})$ is the arc $\theta=\gamma$ \label{fig4}}
\end{center}
\end{figure}

We now consider the space $R^\nat(B^3, A_1\cup A_2)$ consisting of conjugacy classes of $SU(2)$ representations which send $a,b,c,d$ and $h$ to $C(\bbi)$ and $w$ to $-1$, where $h$ denotes the meridian of $H$ and $w$ the meridian to the arc $W$.  As before, we may conjugate so that (abusing notation)
  $$a=d=\bbi,~ b=c=e^{\gamma\bbk}\bbi,~ w=-1$$
  with $\gamma\in [0,\pi]$.  The relation $[\bar a,\bar h]=w=-1$ implies that 
   $$h= e^{\tau\bbi}\bbj$$ for some $\tau$.
   
   Hence the image  of the restriction  
  $ R^\nat(B^3, A_1\cup A_2)\to R(S^2,\{a,b,c,d\})$ is equal to the image  $ R(B^3, A_1\cup A_2)\to R(S^2,\{a,b,c,d\})$. Both are given by the     arc  $\psi(\gamma,\gamma),~\gamma\in [0,\pi]$,  as illustrated above.  In the case of $R^\nat(B^3, A_1\cup A_2)$, however, the restriction is not injective. In fact, there is a circle action on $ R^\nat(B^3, A_1\cup A_2)$  defined by the parameter $\tau$. This action is free away from the two fixed points, which occur when $\sin\gamma=0$. The restriction map $ R^\nat(B^3, A_1\cup A_2)\to R(S^2,\{a,b,c,d\})$ factors through the orbit map, which is just the   map $ R^\nat(B^3, A_1\cup A_2)\to R(B^3, A_1\cup A_2)$ that forgets $h$ and $w$.  Summarizing, we have:
\begin{prop}\label{unpert}
  $ R^\nat(B^3, A_1\cup A_2)$ is a   2-sphere, corresponding to 
  $$a\mapsto \bbi, ~b\mapsto e^{\gamma\bbk}\bbi,~ c\mapsto e^{\gamma\bbk}\bbi, ~d\mapsto \bbi, 
  ~h\mapsto  e^{\tau\bbi}\bbj,~w\mapsto -1$$
  for $(\gamma,\tau)\in [0,\pi]\times [0,2\pi]$, cylindrical coordinates on $S^2$. The restriction to the pillowcase
  $$ R^\nat(B^3, A_1\cup A_2)\to R(S^2,\{a,b,c,d\})$$ has image the diagonal arc $\psi(\gamma,\gamma),~\gamma\in [0,\pi]$, and the fibers   are circles parameterized by $\tau$ over the interior of the arc and a single point over each endpoint.  \qed
\end{prop}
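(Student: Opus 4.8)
\noindent\emph{Proof proposal.} The plan is to put an arbitrary representation into a normal form dictated by the earlier propositions, then analyze the residual conjugation freedom to read off both the topology of $R^\nat(B^3, A_1\cup A_2)$ and the restriction map. First I would record the relevant fundamental group. Since $A_1\cup A_2$ is the trivial tangle, the discussion preceding Proposition \ref{unpert0} identifies the meridians $c,d$ of $A_2,A_1$ with $b,a$ and makes the relation $ba=cd$ vacuous; adjoining the earring data $H,W$ introduces one new generator $h$ (the meridian of $H$) together with the relation $w=[\bar a,\bar h]$ noted above, which expresses $w$ in terms of $a$ and $h$ and so contributes no further moduli. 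Thus a point of $R^\nat(B^3,A_1\cup A_2)$ is the conjugacy class of a triple $(\rho(a),\rho(b),\rho(h))\in C(\bbi)^3$ subject to $\,\overline{\rho(h)}\,$ and $\bbi$ anticommuting in the appropriate sense, namely $[\,\overline{\rho(a)},\overline{\rho(h)}\,]=-1$, together with $\rho(c)=\rho(b)$, $\rho(d)=\rho(a)$, $\rho(w)=-1$.

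Next I would normalize exactly as in the proofs of Propositions \ref{s2prop} and \ref{unpert0}: conjugate so that $\rho(a)=\bbi$, then use the residual circle $\{e^{t\bbi}\}=\Stab(\bbi)$ to rotate $\rho(b)\in C(\bbi)$ about the $\bbi$-axis into the half-plane spanned by $\bbi$ and $\bbj$ with nonnegative $\bbj$-component, giving $\rho(b)=e^{\gamma\bbk}\bbi$ with $\gamma\in[0,\pi]$. The commutator relation with $\rho(a)=\bbi$ rewrites as $\rho(h)\,\bbi\,\overline{\rho(h)}=-\bbi$, so Proposition \ref{basic}(3) forces $\rho(h)$ to be a pure unit quaternion perpendicular to $\bbi$, i.e.\ $\rho(h)=e^{\tau\bbi}\bbj$ for some $\tau\in\RR/2\pi\ZZ$. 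This produces the claimed parameterization, and comparison with the formula for $\psi$ in Proposition \ref{s2prop} shows immediately that the restriction $R^\nat(B^3,A_1\cup A_2)\to R(S^2,\{a,b,c,d\})$, which just forgets $h$ and $w$, has image the diagonal arc $\{\psi(\gamma,\gamma):\gamma\in[0,\pi]\}$.

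The heart of the argument is then deciding when two normal forms are conjugate, which amounts to computing the leftover stabilizer. If $\gamma\in(0,\pi)$ then $\rho(a)=\bbi$ and $\rho(b)=e^{\gamma\bbk}\bbi\ne\pm\bbi$ generate a nonabelian subgroup, so by Proposition \ref{basic}(1) the stabilizer is $\{\pm1\}$, acting trivially; hence $(\gamma,\tau)$ is a complete invariant on this open set (one checks $\tau$ is a genuine modulus mod $2\pi$, since the rotation used to normalize $\rho(b)$ is determined only mod $\pi$ in $t$, changing $\tau$ by multiples of $2\pi$), and the fiber of the restriction over an interior point $\psi(\gamma,\gamma)$ of the arc is the full circle of $\tau$-values. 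If $\gamma\in\{0,\pi\}$ then $\rho(b)=\pm\bbi$, the subgroup $\langle\rho(a),\rho(b)\rangle$ is abelian and noncentral, and the stabilizer is the circle $\{e^{t\bbi}\}$, which by Proposition \ref{basic}(2) acts on $\rho(h)=e^{\tau\bbi}\bbj$ by $\tau\mapsto\tau+2t$, transitively; so all $\tau$ give a single conjugacy class and the fiber over each endpoint $\psi(0,0),\psi(\pi,\pi)$ is one point. Therefore $R^\nat(B^3,A_1\cup A_2)$ is the quotient of the cylinder $[0,\pi]\times(\RR/2\pi\ZZ)$ obtained by collapsing each boundary circle, a $2$-sphere with $(\gamma,\tau)$ as cylindrical coordinates.

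Finally I would check that the parameterizing map from this $2$-sphere to $R^\nat(B^3,A_1\cup A_2)$ is continuous and bijective; since the sphere is compact and the character variety is Hausdorff, it is a homeomorphism, and the fiber description of the restriction to the pillowcase is exactly what was found above. I expect the only real obstacle to be the bookkeeping in the third paragraph: correctly tracking the jump in stabilizer from $\{\pm1\}$ to $\{e^{t\bbi}\}$ at $\gamma\in\{0,\pi\}$ and confirming that it collapses the two end circles to precisely the two cone points of a sphere, rather than to some other identification, and keeping straight that $\tau$ lives mod $2\pi$ (not mod $\pi$) on the open stratum.
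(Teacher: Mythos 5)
Your proposal is correct and takes essentially the same approach as the paper: after normalizing $\rho(a)=\bbi$ and $\rho(b)=e^{\gamma\bbk}\bbi$ with $\gamma\in[0,\pi]$, the relation $[\bar a,\bar h]=-1$ together with Proposition \ref{basic}(3) forces $h=e^{\tau\bbi}\bbj$, and the residual conjugation stabilizer $\{\pm 1\}$ on the open stratum versus $\{e^{t\bbi}\}$ at $\gamma\in\{0,\pi\}$ gives exactly the paper's description of a circle action that is free except at two fixed points, yielding a $2$-sphere. The added details (the fundamental group bookkeeping, the compact-to-Hausdorff homeomorphism check) are implicit in the paper and fine, though the aside about the normalizing rotation being ``determined mod $\pi$ in $t$'' is slightly off in emphasis — the residual stabilizer of $(\bbi, e^{\gamma\bbk}\bbi)$ for $\gamma\in(0,\pi)$ is literally $\{\pm1\}$, which acts trivially — but the conclusion you draw from it is correct.
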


\section{Perturbations }

\subsection{Holonomy perturbations of the Chern-Simons functional  and its critical points}

We recall some aspects of the  definition  and basic properties of instanton homology and the related holonomy perturbations that we need. We provide the minimal description needed for our purposes, and refer to the series of articles \cite{KM1,KM-khovanov,KM-filtrations} for details.

\medskip

To the triple $(Y, K^\nat, W)$,  Kronheimer-Mrowka assign  {\em singular bundle data} ${\bf P}$ which  consists of an $SO(3)$ bundle over $Y\setminus K^\nat$ whose second Stiefel-Whitney class is Poincar\'e dual to $W$, as well as a certain kind of $O(2)$ reduction near $K\cup H$.  
The singular bundle data in turn gives rise to an affine space of $SO(3)$ connections 
 $\cC(Y,K;{\bf P})$ which have a prescribed singularity near $K^\nat$, with tangent spaces identified with spaces of  bundle-valued 1-forms with appropriate behavior near $K$.
 The  determinant one  gauge group $\cG(Y,K;{\bf P})$ acts on $\cC(Y,K;{\bf P})$.

 The Chern-Simons functional $\cs: \cC(Y,K;{\bf P})\to \RR$ has the property that its gradient vector field with respect to the $L^2$ inner product on $1$-forms is 
 $$({\text {\rm grad}} \cs)_B=*F_B$$
The critical points of the Chern-Simons functional therefore consist of {\em flat} singular $SO(3)$ connections.  The gauge group $\cG(Y,K;{\bf P})$ preserves 
the set of flat connections.  As explained in \cite{KM-khovanov}, the set of gauge equivalence classes of critical points, $\frak{C}(Y,K;{\bf P})$,  is identified, via the holonomy, with $R^\nat(Y,K)$.

  Any critical point $A$ of $\cs$ restricts to an honest flat connection on $Y-N(K)$. The bundle is trivial over $Y\setminus N(K)$ since $W\subset N(K)$ and hence the holonomy of $A$ lifts uniquely to $SU(2)$, giving a representation $\rho_A:\pi_1(Y\setminus N(K))\to SU(2)$ which sends the meridians to pure unit quaternions, i.e. $\rho_A\in R(Y,K)$.

 \medskip
  
   The reduced instanton homology $I^\nat(Y,K)$   is  the Morse-Floer  homology of a $\ZZ/4$ graded-chain complex $CI^\nat(Y,K)$ associated to $\cs$.  The chain complex  $CI^\nat(Y,K)$ should therefore be generated by the set of gauge equivalence classes  $\frak{C}(Y,K;{\bf P})$ of critical points  of $\cs$, which are identified with $R^\nat(Y,K)$. For any non-trivial knot  $K$ in $S^3$, however, Kronheimer-Mrowka have proved \cite{KM0}  that  there exist non-abelian traceless $SU(2)$ representations.  Thus Proposition \ref{circle or point} shows that $R^\nat(S^3,K)$ always contains circles.  Since the Morse complex of $\cs$ must be finitely generated in order to define a sensible theory, we see that we must perturb $\cs$ to ensure that its critical points are isolated and finite in number.   
  
  This is achieved by adding to  $\cs$ a {\em holonomy perturbation} $h_\pi:\cC(Y,K;{\bf P})\to \RR$. The function $h_\pi$ is constructed from  data   consisting of a collection of embeddings $e_i:S^1\times D^2\to Y\setminus(K^\nat\cup W)$ and some conjugation invariant functions $g_i:SU(2) \to \RR$. Following \cite{KM1, KM-khovanov}, $\pi=\{e_i, g_i\}$ is called the {\em perturbation data}, and determines the function  $h_\pi:\cC(Y,K;{\bf P})\to \RR$ by the formula
  $$h_\pi (A)=\sum _i \int_{ D^2} g_i(\text{hol}_{S^1 \times  \{x\}}(A))\eta(x) d^2x,$$
  where $\eta(x)$ is a (fixed) radially symmetric smooth cutoff function on $D^2$, and,  
  given $x\in D^2$, $\text{hol}_{S^1 \times  \{x\}}(A)$ denotes the holonomy of $A$ around the loop 
  $t\mapsto e_i(e^{t\bbi},x),~ t\in [0,2\pi]$. 
  
   Kronheimer-Mrowka consider more general perturbation functions, but these suffice for our purposes. In fact, we will only require one embedding $e:S^1\times D^2\to Y\setminus(K^\nat\cup W)$, and the function $g$ will be taken to be $g(q)=\epsilon \Real(q)$ for some small $\epsilon>0$.

 Denote by $P_j$ the image of $e_j$, a solid torus.  We will abuse terminology and call $P_J$ a {\em perturbation curve}. Denote by $p_j$ the meridian of $P_j$, i.e. the (suitably based) loop $e_j(*\times \partial D^2)$, and by $\ell_j$ a choice of longitude, e.g. $e_j(S^1\times *)$. Let $P=\sqcup_j P_j$.   Denote by $\mu_K,\mu_H, \mu_W$ (suitably based) meridians of $K, H, W$ respectively.

  Any critical point of $\cs + h_\pi$  is the gauge equivalence class of a connection which is flat outside the image $P$ of the 
embeddings $e_i:S^1\times D^2\to Y\setminus(K^\nat\cup W)$.   
Therefore,  critical points of   the perturbed functional $\cs + h_\pi$ are identified with conjugacy classes of representations 
$\pi_1(Y\setminus(K\cup H\cup W\cup P))\to SU(2)$  which send the meridians $\mu_K, \mu_H$ to $C(\bbi)$, and $\mu_W$ to $-1$, and which also satisfy a certain constraint determined by $g_j$  when restricted to the meridian and longitude of  the $j$th boundary torus of $P$. 

 Lemma 61 of \cite{herald} identifies the constraint and shows that given any   list of smooth functions $f_j:\RR\to \RR$ satisfying $f_j(-x)=-f_j(x)$ and  $f_j$ is $2\pi$ periodic (for example, $f_j(x)=\sin(x)$), there exist $g_j$ as above so that the  constraint on the $j$th boundary torus is given by the {\em perturbation condition}:
 \begin{center} {\em If the representation
 $\rho:\pi_1(Y\setminus(K\cup H\cup W\cup P))\to SU(2)$ takes the  meridian and longitude $p_j,\ell_j$ of the $j$th component of $P$ to $e^{\nu_j Q_j}$ and $e^{\beta_j Q_j}$ respectively, for some $Q_j\in C(\bbi)$, then $\nu_j=f_j(\beta_j)$.}
 \end{center}

  We denote the  space of conjugacy classes of such representations by $R_\pi^\nat(Y,K)$. Explicitly, $R_\pi^\nat(Y,K)$  is the space of $SU(2)$-conjugacy classes  of representations
  $$
  \rho:\pi_1(Y\setminus(K\cup H\cup W\cup P))\to SU(2)$$
  satisfying
\begin{equation}\label{pertcond}\rho(\mu_K),\rho(\mu_H)\in C(\bbi), \rho(\mu_W)=-1,
  \nu_j=f_j(\beta_j) \text{~when~} \rho(p_j)=e^{\nu_j Q_j} \text{~and ~}\rho(\ell_j)=e^{\beta_j Q_j}.
\end{equation}

Similar constructions apply to $\chi(Y,K)$,  the space of conjugacy classes of all representations $\pi_1(Y\setminus N(K))\to SU(2)$. One can perturb   using embeddings $e_i:S^1\times D^2\to Y\setminus N(K)$; the resulting critical set is denoted by  $\chi_\pi(Y,K)$, and is identified with those representations $\pi_1(Y\setminus(K\cup P))\to SU(2)$  satisfying the constraints 
 \begin{equation}\label{pertconst}
 \nu_j=f_j(\beta_j) \text{~when~} \rho(p_j)=e^{\nu_j Q_j} \text{~and ~}\rho(\ell_j)=e^{\beta_j Q_j}
\end{equation}
on each $\partial P_j$.
\medskip

 When   $P$  lies outside the tubular neighborhood $N(K)$ of $K$ containing $K^\nat\cup W$, restriction defines a map 
$R_\pi^\nat(Y,K)\to \chi_\pi(Y,K)$
with image which we denote by $R_\pi(Y,K)$. Just as in the unperturbed case, the fibers of the restriction map 
$$R_\pi^\nat(Y,K)\to R_\pi(Y,K)$$
are circles over every representation with non-abelian image, and a point over every representation with abelian image. 

In particular,  any choice of perturbation data $\pi=\{e_i, g_i\}$ which makes $R_\pi^\nat(Y,K)$  a finite union of isolated points  must include at least one perturbation curve $P\subset  Y\setminus (K^\nat\cup W)$ which intersects the separating torus $\partial N(K)$ essentially and hence must link $H\cup W$ in some way.  This observation motivates using the perturbation curve $P$ in Figure \ref{fig8}.  However, before analyzing the effect of perturbing along $P$ to turn circles into pairs of isolated points, a discussion  concerning the {\em non-degeneracy} of the space $R^\nat_\pi(Y,K)$ for appropriate perturbations is in order.  

\subsection{Non-degeneracy}

In Floer-type theories, the chain complexes are generated by critical points of a functional defined on a configuration space, and the boundary operator is defined by counting integral curves for the gradient of the functional that connect the critical points (so-called ``gradient flowlines").  The critical points of the functional  form a moduli space, and to have a finitely generated complex, then, one must ensure that the number of points in these spaces are finite.  This can be achieved by the analogue of a slight perturbation of a real-valued function on a  finite dimensional manifold to ensure that its critical points are non-degenerate, and hence isolated (and finite in number if the manifold is compact).    In the infinite dimensional setting, we likewise must perturb the functional defining the Floer theory to achieve non-degeneracy of the Hessian.  This can be viewed as a first step in constructing a Floer theory. 
 
To ensure that differentials are defined, and that the resulting homology is well defined, however, requires more.   Namely, we must  have some form of transversality for the moduli spaces of gradient flow lines connecting critical points of the functional. Precise conditions depend on the context, and are usually subsumed under the terms  ``regularity'' or ``transversality".     For example, in finite dimensions a function $f:M\to \RR$ is non-degenerate if it is a Morse function, and regularity adds the requirement that the stable and unstable manifolds intersect transversally (which is usually referred to as the Morse-Smale condition).  In Floer theory for Lagrangian intersections, non-degeneracy is achieved by a Hamiltonian isotopy of  the Lagrangian submanifolds so that they  intersect transversally.  Regularity is achieved by perturbing the almost complex structure which defines the $J$-holomorphic curve equation satisfied by gradient flowlines in the space of paths connecting the two Lagrangians.  In the context of instanton homology, non-degeneracy is expressed by the condition that the perturbed Chern-Simons functional is Morse in a suitable infinite-dimensional sense.

A Lagrangian-Floer theory  in the context of traceless character varieties of knot complements in 3-manifolds has not yet been worked out and identified with singular  instanton knot homology. Providing such an identification is an instance of the {\em Atiyah-Floer Conjecture}, which has been established in several other  settings.   In  all known cases critical points of the  Chern-Simons functional are non-degenerate (i.e.~Morse) exactly when the corresponding Lagrangian intersections are transverse.  This is proved by identifying the kernel of the Hessian of the Chern-Simons functional at  a critical point with some form of de Rham cohomology, which is then related to transversality of the Lagrangian submanifolds via the de Rham theorem and the Mayer-Vietoris sequence.

To carry this out carefully below in our context would take us too far afield from the intent of this article.  We offer instead the following as a notion of non-degeneracy in this article, which is adapted from Weil's observation \cite{weil} that $H^1(\pi;\mathfrak{g})$ is identified with the tangent space at a smooth point of the character variety $\chi(\pi,G)$.

Given perturbation data $\pi$ with the $f_j$ real analytic, the space $R^\nat_\pi(Y,K)$ is the orbit space of the free $SU(2)/\pm 1=SO(3)$ conjugation action on a real analytic variety.  This variety is constructed as follows.

Consider  a presentation $\langle G \mid R\rangle$ of $\pi_1(Y\setminus (K\cup W \cup P))$.  
 The presentation defines in the usual way a polynomial map
$F_1:SU(2)^G \to SU(2)^R$ with preimage $F_1^{-1}(1,1,\cdots,1)$ parameterizing  all homomorphisms $\pi_1(Y\setminus(K\cup H\cup W\cup P))\to SU(2).$

Fixing three words in the generators representing the meridians $\mu_K, \mu_H$ and $\mu_W$, a  map 
$F_2: SU(2)^G\to SU(2)$ is defined by sending a $G$-tuple to the image of $\mu_W$.  Then $F^{-1}_2(-1)$ meets $F_1^{-1}(1,1,\cdots,1)$ in those homomorphisms sending $\mu_K,\mu_H$ to perpendicular elements of $C(\bbi)$ and $\mu_W$ to $-1$.   (We are assuming $K$ is a knot, not a link. In the case when $K$ has $n$ components and $H$ links $K_1$,  define $F_2:SU(2)^G\to SU(2)\times \RR^{n-1}$
by taking the last factors to be the value $\Real(\mu_{K_i}),\ i=2,\cdots,n.$)

  Finally, for each perturbation curve $P_j$, let $G_j:SU(2)\to SU(2)$ be the map $G_j(e^{\nu Q})=e^{f_{j}(\nu) Q}$. The properties of $f_j$ imply that $G_j$ is well defined. The meridian and longitude $p_j,\ell_j$ of $P_j$, expressed as words in the generators $G$, give rise to a map $F_{3,j}:SU(2)^G\to SU(2)$ by $F_{3,j}= p_j^{-1} G_j(\ell_j).$  The preimage of $1$ precisely captures the perturbation constraints (Equation (\ref{pertconst})).

The product
$$F=F_2\times F_1\times \prod_j F_{3,j}:SU(2)^G\to SU(2)\times SU(2)^R \times \prod_j SU(2)$$
defines $R^\nat_\pi(Y,K)$ in the sense that 
$$R^\nat_\pi(Y,K)= F^{-1}(1,1,\cdots,1)/SU(2).$$
Thus $R^\nat_\pi(Y,K)$ is finite if and only if $ F^{-1}(1,1,\cdots,1)$ is a finite union of copies of $SO(3)$ with $SU(2)/\pm1$ acting freely.  

A  representation  $\rho$ whose conjugacy class lies in $R^\nat_\pi(Y,K)$ determines a point (which we denote again  by $\rho$) in $F^{-1}(1,1,\cdots,1)$, by evaluating $\rho$ on the elements of $G=\pi_1(Y\setminus(K\cup W\cup P))$.   
We call $\rho$  {\em non-degenerate} if is isolated in $R^\nat_\pi(Y,K)$ and the kernel of the differential $dF_\rho$ is 3-dimensional, i.e. it is the tangent space to the orbit under the diagonal conjugation action of $SU(2)$ on $SU(2)^G$.  (The fact that, for any $G$-tuple in the preimage, $\mu_K,\mu_H$ are sent to noncommuting elements of $SU(2)$ implies that the stabilizer of the $G$-tuple is $\{\pm 1\}$ and the orbit is 3-dimensional.) 
We then call $R^\nat_\pi(Y,K)$   non-degenerate  if it is finite and all its points are non-degenerate.

Standard arguments show that the notion of non-degeneracy is unchanged by changing the presentation. In fact one can take the presentation to be generated by all elements of $\pi_1(Y\setminus(K\cup H\cup W\cup P)$ and relations to be the entire multiplication table to see that the notion depends only on the group and the choice of perturbation.   

 It will be clear from our constructions below  that as a space of representations, $R^\nat_\pi(Y,K)$  is a transverse intersection of two smooth 1-dimensional manifolds in a 2-manifold, in two different ways corresponding to the two different pillowcase pictures. One can show that transversality in these two contexts is equivalent, and equivalent to non-degeneracy as defined above, and we will explore this issue in subsequent work. In  this sense we produce  in Theorem \ref{pertvar} below (in concert with the results of \cite{herald}) a perturbation which makes $R^\nat_\pi(Y,K)$ non-degenerate.

\medskip

A more sophisticated approach is to recast this in the context of group cohomology, which identifies the kernel of $dF_\rho$ with certain 1-cocycles and the tangents to the conjugation orbits with coboundaries, at least when the perturbation is trivial. Then non-degeneracy is equivalent to the vanishing of $H^1(Y\setminus(K\cup H);so(3)_\rho)$ (or, if $K$ has $n$ components, the vanishing of the kernel of 
$H^1(Y\setminus(K\cup H);so(3)_\rho)\to \oplus_{i=1}^n H^1(\mu_{K_i};so(3)_\rho)$).  See \cite[Lemma 3.13]{KM1} and also \cite[Proposition 2.10]{gerard}.  For non-trivial perturbations similar conditions apply.  The references  loc.cit. show that non-degeneracy in this cohomological sense is equivalent to the perturbed Chern-Simons functional $CS+h_\pi$ having a non-degenerate Hessian at its critical points, and therefore its critical points serve as generators for the instanton-Floer complex defining  $I^\nat(Y,K)$.

 \medskip
As an illustration of these ideas, we show that the distinguished representation $\alpha\in R^\nat(S^3,K)$, defined in Equation (\ref{alphadef}), is non-degenerate.

Decompose $S^3\setminus (K\cup H\cup W)$ into $S^3\setminus N(K)$ and $N(K)\setminus(K\cup H\cup W)$ along a torus $T=\partial N(K)$ as in Equation (\ref{decomptorus}). 
Let $\mu_K,\mu_H,\mu_W$ denote the meridians of $K,H,L$ and $\lambda_K$ the longitude of $K$.  Then $$\pi_1(N(K)\setminus(K\cup H\cup W))=\langle \mu_K,\mu_H,\mu_W,\lambda_K~|~ \mu_W=[\mu_K,\mu_H], 1=[\mu_K,\lambda_K]\rangle.$$
 The representation $\alpha$ restricts to the unique abelian    representation on $S^3\setminus N(K)$ sending the meridian $\mu_K$  (which generates $H_1(S^3\setminus N(K))$)  to $\bbi$.  The longitude $\lambda_K$ is sent to $1$ since it maps to zero in $H_1(S^3\setminus N(K))$.   Finally, $\alpha$ restricts to the non-abelian representation 
 $$\alpha(\mu_K)=\bbi, ~\alpha(\mu_H)=\bbj,~ \alpha(\mu_W)=-1, ~\alpha(\lambda_K)=1$$
 on $N(K)\setminus(K\cup H\cup W)$.

 Hence (taking coefficients in $su(2)$ twisted by $\alpha$),  $H^0(S^3\setminus N(K))=\RR=H^0(T), \ H^0(N(K)\setminus(K\cup H\cup W))=0$. Since $\alpha(\mu_K)=e^{\bbi\pi/2}$ satisfies $\Delta_K(\alpha(\mu_K)^2)\ne 0$, $H^1(S^3\setminus N(K))=\RR$ (see \cite{Klassen}) and $H^1(T)=\RR^2$.

A straightforward calculation yields $H^1(N(K)\setminus(K\cup H\cup W))=\RR^4$ and that the restriction to $H^1(T)=\RR^2$ is surjective.  Indeed, given any pair of unit quaternions $(q_1, q_2)$ near $(\bbi,\bbj)$ and a third unit quaternion $q_3$ in the unique circle subgroup through $q_1$, the assignment
 \begin{equation}\label{family}
\mu_K\mapsto q_1,~\mu_H\mapsto q_2,~ \mu_W\mapsto [q_1,q_2],~\lambda_K\mapsto q_3
\end{equation}
gives a smooth 7-dimensional family of irreducible representations near $\alpha$
on which conjugation acts freely modulo $\pm1$, so that $H^1(N(K)\setminus(K\cup H\cup W))$ is 4-dimensional and maps onto $H^1(T)$.

The Mayer-Vietoris sequence then shows that $H^1(S^3\setminus (K\cup H\cup W))=\RR^3$.   Since $H^1(S^3\setminus (K\cup H\cup W))\cong \ker dF_1/B_1$, where $B_1$ denotes the tangent space to the 3-dimensional orbit through $\alpha$  of the conjugation action (see e.g. \cite{weil})  it follows that $\ker dF_1\cong\RR^6$.  The map $SU(2)\times SU(2)\to SU(2)$ taking a pair to their commutator has $-1$ as a regular value, and hence the map $F_2$ is a submersion near $\alpha$ because the quaternions $q_1,q_2$ of Equation (\ref{family}) can be chosen arbitrarily near $\bbi,\bbj$. This implies that the kernel of $d(F_1\times F_2)$ at $\alpha$ is $6-3=3$ dimensional, so that $\alpha$ is non-degenerate. In particular, $\alpha$ remains non-degenerate under small perturbations.

  \subsection{Perturbation in a 3-ball}\label{pertinball}

Place a loop $P$ inside $B^3$, linking $A_2$ and $H$ as illustrated in Figure \ref{fig8}.  Use the standard meridian-longitude framing of  $P$ to think of $P$ as the image of an embedding  of a solid torus $e:S^1\times D^2\to B^3$.  Label the generators of $\pi_1(B^3\setminus(A_1\cup A_2\cup H\cup W\cup P))$ by $a,b,c,d,h,w,$ and $p$
as indicated in the figure.

  \begin{figure}[H]\centering
\def\svgwidth{3.5in}
 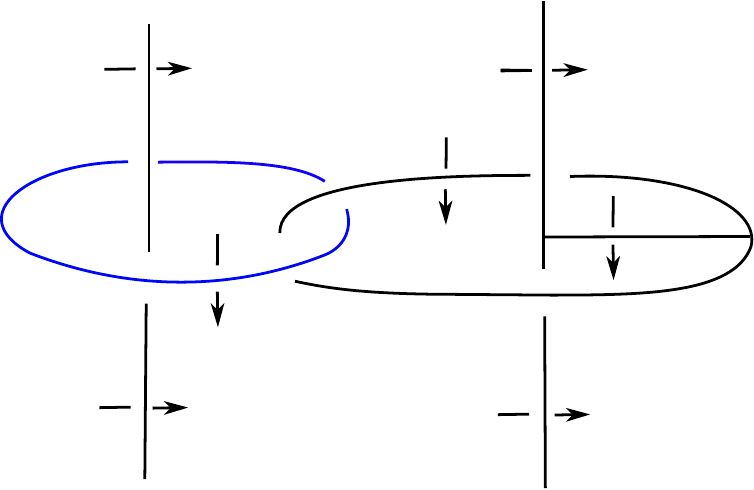

 \caption{\label{fig8}The local picture in the 3-ball where the perturbation occurs.  The holonomy perturbation occurs in the neighborhood of the blue circle.}
\end{figure}

 Fix a smooth function $f:\RR\to \RR$ satisfying $f(0)=0, |f(x)|\leq 1, ~f(-x)=-f(x)$,  $f$ is $2\pi$ periodic, and $f(x)\ne 0$ when $x$ is not a multiple of $\pi$  (it suffices for our purposes to take  $f(x)=\sin(x)$).  Note that $f(n\pi)=0$ for integers $n$.

Fix an $\epsilon\ge 0$ and denote the data $(P,f,\epsilon)$ by $\pi$. 
Define the {\em $\pi$-perturbed moduli space} $R^\nat_{\pi}(B^3,A_1\cup A_2)$ to be the space of conjugacy classes of $SU(2)$ representations which send $a,b,c,d$ and $h$ to $C(\bbi)$, $w$ to $-1$, and which satisfy the perturbation condition. Thus   if the longitude $\lambda_P=bh$ of the  component $P$ is sent to $e^{\beta Q}$ and the meridian $p$ to $e^{\nu Q}$ for some $Q\in C(\bbi)$, then 
 $$\nu=\epsilon f(\beta).$$
In particular, when $\epsilon =0$,   $R^\nat_{\pi}(B^3,A_1\cup A_2)= R^\nat (B^3,A_1\cup A_2)$.
 
 \medskip

 The following is our main result. Its statement is perhaps best understood by examining Figure \ref{fig9}.

\begin{thm}\label{pertvar}  For  all $\epsilon>0$ small enough, the space   $R^\nat_{\pi}(B^3,A_1\cup A_2)$ is homeomorphic to a   circle, 
 parameterized  by $\beta\in \RR/2\pi\ZZ$ by
 the assignment
 \begin{equation*}
\begin{split}
\rho(\beta): a\mapsto \bbi, ~b\mapsto e^{\left(\tfrac \pi 2 + \beta + \epsilon f(\beta) \right) \bbk}\bbi, ~c\mapsto e^{\left(\tfrac \pi 2 + \beta -\epsilon f(\beta) \right)\bbk}\bbi, \\~d\mapsto e^{ -2\epsilon f(\beta)\bbk}\bbi, ~h\mapsto -\bbj e^{-\epsilon f(\beta)\bbk},
 ~p\mapsto e^{\epsilon f(\beta)\bbk}, ~ w\mapsto -1
. \end{split}
\end{equation*}
 \end{thm}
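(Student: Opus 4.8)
The plan is to reduce the assertion to an explicit computation in the unit quaternions, using a Wirtinger presentation of the tangle complement together with the perturbation condition \eqref{pertcond} and the quaternion algebra recorded in Proposition~\ref{basic}.

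First I would read off from Figure~\ref{fig8} a presentation of $\pi_1\big(B^3\setminus(A_1\cup A_2\cup H\cup W\cup P)\big)$ on the generators $a,b,c,d,h,w,p$. The relations that enter are: the four--punctured--sphere relation $ba=cd$; two relations recording that the strand of $A_2$ and the circle $H$ each cross a meridian disk of the perturbation solid torus $P$ once, so that $c$ is a $p$--conjugate of $b$ and $d$ is a $p$--conjugate of $a$; the earring relation, which expresses $w$ as a commutator of (a $p$--twist of) the $A_1$--meridian with the $H$--meridian; and the identity $\lambda_P=bh$ for the longitude of $P$. As a consistency check, setting $p\mapsto1$ recovers the presentation underlying Proposition~\ref{unpert}, and $R^\nat_\pi$ then degenerates at $\epsilon=0$ to the $2$--sphere described there.

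Next, given $[\rho]\in R^\nat_\pi(B^3,A_1\cup A_2)$, I would normalize by conjugation so that $\rho(a)=\bbi$. Feeding $\rho(\mu_W)=-1$ into the earring relation and invoking Proposition~\ref{basic}(3) shows that $\rho(h)$ is a pure unit quaternion perpendicular to a prescribed $p$--conjugate of $\bbi$; combined with $\rho(b),\rho(c),\rho(d)\in C(\bbi)$ and with the perturbation condition --- which forces $\rho(p)$ and $\rho(\lambda_P)$ to share a common axis $Q\in C(\bbi)$ --- one can use the residual circle $\Stab(\bbi)$ of conjugations to arrange $Q=\bbk$, $\rho(b)=e^{\gamma\bbk}\bbi$, and $\rho(p)=e^{\nu\bbk}$, with $\gamma$ and $\nu$ then uniquely determined angles. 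The remaining relations now force, by short quaternion manipulations,
\[
\rho(c)=e^{(\gamma-2\nu)\bbk}\bbi,\quad \rho(d)=e^{-2\nu\bbk}\bbi,\quad \rho(h)=e^{(\nu-\tfrac\pi2)\bbk}\bbi=-\bbj\,e^{-\nu\bbk},\quad \rho(\lambda_P)=\rho(bh)=e^{(\gamma-\nu-\tfrac\pi2)\bbk}.
\]
Hence $R^\nat_\pi(B^3,A_1\cup A_2)$ is identified with the locus of pairs $(\gamma,\nu)$ subject only to the perturbation constraint, which reads $\nu=\epsilon f\big(\gamma-\nu-\tfrac\pi2\big)$.

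Finally I would set $\beta:=\gamma-\nu-\tfrac\pi2$, so $\rho(\lambda_P)=e^{\beta\bbk}$; the perturbation constraint becomes $\nu=\epsilon f(\beta)$ and then $\gamma=\tfrac\pi2+\beta+\epsilon f(\beta)$, and substituting back into the displayed formulas reproduces exactly the assignment $\rho(\beta)$ of the statement. Conversely one checks directly that for each $\beta$ these formulas define a genuine representation satisfying all the constraints, so $R^\nat_\pi(B^3,A_1\cup A_2)=\{[\rho(\beta)]:\beta\in\RR/2\pi\ZZ\}$. It then remains to see that $\beta\mapsto[\rho(\beta)]$ is a homeomorphism onto a circle, and this is where smallness of $\epsilon$ enters: for $\epsilon\max|f'|<1$ the map $\beta\mapsto\gamma(\beta)=\tfrac\pi2+\beta+\epsilon f(\beta)$ is an orientation--preserving diffeomorphism of $\RR/2\pi\ZZ$, and since $(\gamma,\nu)$ is a faithful coordinate on the normalized representations, $\gamma(\beta_1)=\gamma(\beta_2)$ forces $\beta_1=\beta_2$. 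The main obstacles are the two bookkeeping points above: extracting the Wirtinger relations of Step~1 correctly (in particular the precise way $P$ $p$--twists the $A_2$--strand, the circle $H$, and the earring relation), and verifying that after the normalization of Step~2 the pair $(\gamma,\nu)$ is a complete and faithful coordinate, so that no spurious component of the solution variety is missed; the hypothesis on $\epsilon$ then serves only to guarantee the resulting curve is an embedded --- rather than pinched or multiply covered --- circle.
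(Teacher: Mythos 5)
Your outline follows the same overall strategy as the paper --- Wirtinger presentation, normalization by conjugation, quaternion algebra, and reparametrization by $\beta$ --- and the explicit formulas for $\rho(c)$, $\rho(d)$, $\rho(h)$, and $\rho(\lambda_P)$ that you write down are correct once the normalization is in place. However, there is a genuine gap at the normalization step, and it sits exactly where the paper expends most of its effort.

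You claim that ``one can use the residual circle $\Stab(\bbi)$ of conjugations to arrange $Q=\bbk$, $\rho(b)=e^{\gamma\bbk}\bbi$, and $\rho(p)=e^{\nu\bbk}$.'' But $\Stab(\bbi)=\{e^{\tau\bbi}\}$ is only a one-parameter group, and ``$Q=\bbk$'' (the common axis of $\rho(p)$ and $\rho(\lambda_P)$ lies along $\bbk$) and ``$\rho(b)\in\text{span}\{\bbi,\bbj\}$'' are two independent, generically codimension-one conditions on the conjugacy class; you cannot purchase both with a single rotation. One of them must be \emph{derived} from the relations, not assumed. The paper's route is to conjugate so that $\rho(a)=\bbi$ and $\rho(b)=e^{\gamma\bbk}\bbi$, set $Q_P=\cos t\,\bbk+\sin t\,e^{\frac{\gamma+\theta}{2}\bbk}\bbi$, and then extract the constraints $\sin\beta\,\sin t\,\cos(\tfrac{\theta-\gamma}{2})=0$ and $\sin\nu\,\sin t\,\cos(\tfrac{\gamma+\theta}{2})=0$ from the relations $h=\bar b\lambda_P$ and $[a\bar p,h]=-1$. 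Only after a case analysis --- which crucially uses the perturbation condition $\nu=\epsilon f(\beta)$ to rule out the branch where both cosines vanish --- does one conclude $\sin t=0$, i.e.\ $Q_P=\pm\bbk$, and separately handle the degenerate locus $\sin\beta=0$ (where $Q_P$ is not even well defined). Your proposal skips all of this, so as written it does not rule out extra components of $R^\nat_\pi$ where the common axis is transverse to $\bbk$. There is also a secondary omission: after pinning down $Q_P$, one still finds \emph{two} candidate circles of representations differing by a sign (the paper's $\rho$ and $\rho'$), and one must check that conjugation by $\bbi$ carries one onto a reparametrization of the other before concluding the moduli space is a single circle. A way to fill the main gap without reproducing the paper verbatim: after normalizing $\rho(a)=\bbi$ and $Q=\bbk$, use the perpendicularity forced by the earring relation together with the requirement that $\rho(b)\rho(h)$ have axis $\bbk$ --- via the identity for pure $b,h$ that $bh=-\langle b,h\rangle+b\times h$, so $b\times h\parallel\bbk$ forces $b,h\perp\bbk$ whenever $\rho(\lambda_P)\ne\pm1$ --- and then handle $\rho(\lambda_P)=\pm1$ by hand; but this argument must actually appear.
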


 \begin{proof}
A straightforward calculation using the Seifert-Van Kampen  theorem shows that $$\pi_1(B^3\setminus(A_1\cup A_2\cup P\cup H\cup W))$$ is presented with generators $a,b,c,d,w,h, p$, and   relations
\begin{equation}\label {eqstep0}  c=\bar p b p,\ d= \bar c b a, \  [bh,p] =1, \text{~and~} [a\bar p,h]=(ha)w (\bar a\bar h). 
\end{equation}
(The two commutator relations come from the tori which form the boundaries of the tubular neighborhoods of $P$ and $H$.)

Let  $R^\nat_{\pi}(B^3 \setminus P,A_1\cup A_2)$ denote the space of conjugacy classes of $SU(2)$ representations which send $a,b,c,d,h$ to $C(\bbi)$ and $w$ to $-1$,
with no restrictions on where $p$ is sent.

Since $a,b,c,d$ satisfy the same relation $d=\bar c b a$ in 
$\pi_1(B^3\setminus(A_1\cup A_2\cup P\cup H\cup W))$ that is the defining relation in $\pi_1(S^2 \setminus \{a,b,c,d\})$, 
 Proposition \ref{s2prop} implies that we may assume up to conjugation  that any  representation satisfies (abusing notation slightly to identify generators with their image in $SU(2)$):
 \begin{equation*}\label{eqstep1}
a=\bbi, ~b=e^{\gamma\bbk}\bbi, ~c=e^{\theta\bbk}\bbi, ~d=e^{(\theta-\gamma)\bbk}\bbi
\end{equation*}
for $(\gamma,\theta)\in [0,\pi]\times S^1$. {\em  It will be convenient to relax this condition and assume that $$(\gamma, \theta)\in \RR/(2\pi\ZZ)\times S^1=S^1\times S^1$$ for most of this proof, and then determine which pairs of representations are equivalent at the end of the argument.}

 Denote by $Q_H$ the image of $h$, and let
$Q_P\in C(\bbi)$ be a pure unit quaternion so that 
 $\lambda_P=bh$ is sent to $e^{\beta Q_P}$ for some $\beta\in [0,2\pi)$. 
 Since $p$ commutes with $\lambda_P$, it follows that the representation sends $p$ to $e^{\nu Q_P}$ for some angle $\nu$.   The perturbation condition implies that 
  $\nu=\epsilon f(\beta)$, but we'll impose this condition last.  For the moment, we assume  $\epsilon$ is small, so that $|\nu|<\frac \pi 2$. 
Note that
 $Q_P$ is determined up to $\pm 1$, unless $\lambda_P$ is sent to $\pm 1$, in which case we may take $Q_P$ to be any  element of $C(\bbi)$.

  Summarizing what we have accomplished so far:   any   representation  whose conjugacy class lies in  $R^\nat_{\pi}(B^3,A_1\cup A_2)$ can be conjugated so that  \begin{equation}\label{eqstep2}
a=\bbi, ~b=e^{\gamma\bbk}\bbi, ~c=e^{\theta\bbk}\bbi, ~d=e^{(\theta-\gamma)\bbk}\bbi, h=Q_H, p=e^{\nu Q_P}, ~ w=-1, \lambda_P=e^{\beta Q_P}
\end{equation} 
for some 6-tuple
   \begin{equation}\label{eqstep3}
   (\gamma, \theta, Q_H, Q_P, \beta, \nu)\in
 [0,2\pi)\times [0,2\pi)\times  C(\bbi)\times C(\bbi)\times [0,2\pi)\times \left(-\tfrac \pi 2 , \tfrac \pi 2 \right) \end{equation} 
 One obvious redundancy in this description is that $(\nu , \beta, Q_P)$ is equivalent to $(-\nu, \beta+\pi, -Q_P)$.

\medskip

The relation $\bar p bp=c$ implies that
 \begin{equation}\label{eqstep6}
e^{-\nu Q_P}e^{\gamma\bbk}\bbi e^{\nu Q_P}=e^{\theta\bbk}\bbi . 
\end{equation} 
Recall from Proposition \ref{basic} that the conjugation action of $e^{tQ}$ on the 2-sphere $C(\bbi)$ is rotation about the axis through $\pm Q$ of angle $2t$.  
 
 If $e^{\gamma\bbk}\ne e^{\theta\bbk}$, then Equation (\ref{eqstep6}) implies that  $Q_P$ lies on the great circle in $C(\bbi)$ through $\bbk$ and 
 $e^{\frac{\gamma+\theta}{2}\bbk}\bbi$. This is because the only axes $\pm Q$ for which the orbits of rotation pass through both $e^{\gamma\bbk}\bbi$ and $
 e^{\theta\bbk}\bbi $ lie on this great circle.  

 If $e^{\gamma\bbk}= e^{\theta\bbk}$, then $e^{\nu Q_P}$ stabilizes $e^{\gamma\bbk}\bbi$. When $\nu\ne 0$, this implies that  $Q_P=\pm e^{\gamma\bbk}\bbi,$ so either $\nu=0$ (and $Q_P$ is not constrained by this relation) or $Q_P=\pm e^{\gamma \bbk}\bbi$.

 Hence, in any case,
  \begin{equation}\label{eqstep7}
  \nu = 0 \mbox{ and } \gamma=\theta, \mbox{ or } Q_P= \cos t \bbk + \sin t e^{\frac{\gamma+\theta}{2}\bbk}\bbi 
 \end{equation}
 for some $t\in[0,2\pi)$.

The relation $h=\bar b\lambda_P$ implies that $Q_H$  is determined by $\gamma,$ $\beta$ and $Q_P$   by the equation
 \begin{equation*}
 Q_H=-\bbi e^{-\gamma\bbk}e^{\beta Q_P}.\end{equation*}
Since $\Real(Q_H)=0$, using Equation (\ref{eqstep7}) we see
\begin{eqnarray*} 
 0=\Real(Q_H)&=&\Real(-\bbi e^{-\gamma\bbk}e^{\beta Q_P})\\
 &=&\Real\big(-\bbi e^{-\gamma\bbk}(\cos\beta +\sin \beta(\cos t \bbk + \sin t e^{\frac{\gamma+\theta}{2}\bbk}\bbi ))\big)\\
 &=&\sin\beta\Real\big(-\bbi e^{-\gamma\bbk}  (\cos t \bbk + \sin t e^{\frac{\gamma+\theta}{2}\bbk}\bbi  )\big)\\
 &=&\sin\beta\sin t \Real\big(-\bbi e^{-\gamma\bbk}    e^{\frac{\gamma+\theta}{2}\bbk}\bbi   \big)\\
 &=&\sin\beta\sin t\cos(\tfrac{\theta-\gamma}{2})
 \end{eqnarray*}
 Hence 
\begin{equation}\label{eqstep9}
0=\sin\beta\sin t\cos(\tfrac{\theta-\gamma}{2})
 \end{equation}
 
 Next, consider the relation  $[a\bar p,h]=haw\bar a\bar h=-1$, or, in terms of our chosen coordinates:
$$[\bbi e^{-\nu Q_P}, Q_H]=-1.$$
 This equation can be rewritten, using the fact that if $Q\in C(\bbi)$, then $ Q^{-1}=-Q$,  as
$$
 \bbi e^{-\nu Q_P}  Q_H (\bbi e^{-\nu  Q_P})^{-1}=-Q_H. 
$$
Part (3) of Proposition \ref{basic} shows that $ \bbi e^{-\nu Q_P}$ is itself a pure unit quaternion, and is perpendicular to $Q_H$.  Hence 

\begin{eqnarray} 
 0&=& \Real(\bbi e^{-\nu Q_P})\nonumber \\
&=&-\sin\nu \Real(\bbi Q_P)\nonumber \\
&=&-\sin \nu \Real\big(\bbi (\cos t \bbk + \sin t e^{\frac{\gamma+\theta}{2}\bbk}\bbi )\big)\nonumber\\
&=& \sin \nu \sin t\cos(\tfrac{\gamma+\theta}{2}) \label{eqstep10prime}
 \end{eqnarray}
\medskip

Thus far we have shown that all $SU(2)$ representations of $\pi_1(B^3 \setminus (A_1 \cup A_2\cup P \cup H\cup W))$ sending $a,b,c,d,h$ into $C(\bbi)$, and $w\mapsto -1$ can be conjugated to have the form 
in \eqref{eqstep2}, for a 6-tuple in \eqref{eqstep3};  furthermore, either $\nu=0$ and $\gamma=\theta$, or we can express $Q_P$ as $$Q_P=\cos t \bbk + \sin t  e^{\tfrac{\gamma+\theta}2 \bbk} \bbi.$$  In the latter case,   equations \eqref{eqstep9} and \eqref{eqstep10prime}  must also hold.  

Now assume that such a representation corresponds to the holonomy of a perturbed flat connection on $B^3 \setminus (A_1 \cup A_2 \cup H \cup W)$.  Then there is the additional condition that $\nu = \epsilon f(\beta)$, where  $f$ is an odd, $2\pi$--periodic, function, whose zeroes occur only at  multiples of $\pi$.  We will  examine what this additional restriction implies about the representations in $R^\nat_{\pi}(B^3,A_1\cup A_2)$.

To begin, note that $\nu = 0 $ if and only if $\sin \beta = 0$.  If $\nu =\sin \beta = 0$, then the representation is independent of $Q_P \in C(\bbi)$;  in particular, in this case, we can assume $Q_P=\pm\bbk$. We next examine the case that $\nu \neq 0$ and $\sin \beta\neq 0$.   In this case, either
$$\sin t=0,$$or $$ \cos(\tfrac{\theta+\gamma}{2})=0\  \ \text{and}\ \cos(\tfrac{\theta-\gamma}{2})=0.$$

\noindent  (note we are using the fact that  $\nu=0$ if and only if $\sin\nu=0$, since $|\nu|<\frac{\pi}{2}$.)

 Suppose $\cos(\tfrac{\theta+\gamma}{2})=0$ and $\cos(\tfrac{\theta-\gamma}{2})=0$. Since $\gamma\in[0,2\pi)$ and $\theta\in [0,2\pi)$, there are two solutions:
$$(\gamma,\theta) = (0,\pi) \text{~or~} (\pi,0).$$

\noindent If $(\gamma,\theta)$ is equal to $(\pi, 0)$, then Equation (\ref{eqstep2}) implies that 
$a=\bbi, b=-\bbi,  c= \bbi$.  If   $(\gamma,\theta)=(0,\pi)$, then $a=\bbi, b=\bbi, c=-\bbi$.
The relation $\bar p b  p=c$  implies in either case that 
$e^{-\nu Q_P}\bbi e^{\nu Q_P}=-\bbi$. Proposition  \ref{basic} then implies that $e^{-\nu Q_P}$ is a pure unit quaternion, which is impossible  since $|\nu|<\tfrac{\pi}{2}$. 
Thus one of 
$\cos(\tfrac{\theta+\gamma}{2})$ and $\cos(\tfrac{\theta-\gamma}{2})$ must be non-zero.  It follows that if $\sin \beta \neq 0$, then $\sin t=0$.  In particular, either $\sin \beta \neq 0$ and $\sin t$ {\em must} equal zero, or $\sin \beta = 0$ and we can assume $\sin t=0$ without changing the representation.  Hence we may assume that  $Q_P=\sigma\bbk$ for some $\sigma\in \{\pm 1\}$.

%
\bigskip

We've now seen that 
any   representation  whose conjugacy class lies in  $R^\nat_{\pi}(B^3,A_1\cup A_2)$ can be conjugated so that  \begin{equation}\label{eqstep2prime}
a=\bbi, ~b=e^{\gamma\bbk}\bbi, ~c=e^{\theta\bbk}\bbi, ~d=e^{(\theta-\gamma)\bbk}\bbi, ~h=-\bbi e^{(\beta\sigma-\gamma)\bbk}, ~
p=e^{\epsilon f(\beta) \sigma \bbk}, ~ w=-1,~ \lambda_P=e^{\beta \sigma \bbk}
\end{equation} 
for some 4-tuple
   \begin{equation}\label{eqstep3prime}
   (\gamma, \theta, \beta, \sigma)\in
 [0,2\pi)\times [0,2\pi)\times   [0,2\pi)\times \{ \pm 1 \}
  \end{equation} 
 
Since $f(\beta+\pi)=-f(\beta)$, $(\gamma, \theta, \beta, \sigma)$ gives the same representation as $(\gamma, \theta,2\pi-  \beta, -\sigma)$ when $\beta>0$. When $\beta=0$, then $\beta=-\beta$. Hence we may assume that $\sigma =1$, or more precisely, any   representation  whose conjugacy class lies in  $R^\nat_{\pi}(B^3,A_1\cup A_2)$ can be conjugated so that
 
    \begin{equation}\label{eqstep2subprime}
a=\bbi, ~b=e^{\gamma\bbk}\bbi, ~c=e^{\theta\bbk}\bbi, ~d=e^{(\theta-\gamma)\bbk}\bbi,~ h=-\bbi e^{(\beta-\gamma)\bbk}, 
~p=e^{\epsilon f(\beta)  \bbk}, ~ w=-1, ~\lambda_P=e^{\beta  \bbk} \end{equation} 
where so far, the angles $\gamma, \theta, \beta$  could lie anywhere in $[0,2\pi)$.  

\medskip

Finally, we determine what relations between the angles are necessary to satisfy the perturbed flat equation.

   The relation $\bar p b p=c$ implies that $e^{-\epsilon f(\beta)\bbk}e^{\gamma\bbk}\bbi e^{\epsilon f(\beta) \bbk}=e^{\theta\bbk}\bbi$,
which implies $e^{(\gamma-2\epsilon f(\beta))\bbk}=e^{\theta\bbk}$, so
 \begin{equation}\label{eqstep10.5}
\theta \equiv \gamma-2\epsilon f(\beta)\pmod{2\pi}.
 \end{equation}

The relation $[a\bar p, h]=-1$ gives
$$[\bbi e^{-\epsilon f(\beta)\bbk},-\bbi e^{(\beta-\gamma)\bbk}]=-1$$
and so
$$-1=\bbi e^{-\epsilon f(\beta)\bbk}(-\bbi )e^{(\beta-\gamma)\bbk}
e^{\epsilon f(\beta)\bbk}(-\bbi)e^{-(\beta-\gamma)\bbk}\bbi
=e^{2(\epsilon f(\beta)+\beta-\gamma)\bbk}.$$
Hence
  \begin{equation}\label{eqstep12}   \gamma \equiv \beta+\epsilon f(\beta) +\tfrac \pi 2  \mod{\pi}.
 \end{equation}
 In other words, we have 
  \begin{equation}\label{firstcircle}\begin{split}
 \gamma ~&\equiv ~\beta+\epsilon f(\beta) +\tfrac \pi 2  \pmod{2\pi}\\
\theta ~&\equiv ~\beta -\epsilon f(\beta)  +\tfrac \pi 2\pmod{2\pi}
\end{split}\end{equation}
or else 
 \begin{equation}\label{secondcircle}\begin{split} 
 \gamma ~&\equiv \beta+\epsilon f(\beta) +\tfrac {3\pi} 2  \pmod{2\pi}\\
\theta ~&\equiv~ \beta -\epsilon f(\beta)  +\tfrac {3\pi} 2\pmod{2\pi}
\end{split}\end{equation}
These formulas give us two families of representations indexed by $\beta\in [0, 2\pi )$ whose union maps subjectively to $R^\nat_\pi(B^3, A_1\cup A_2)$. These define parameterizations of two smooth circles.
\begin{equation}\label{twocircles}
\begin{array}{rcccrcc}
 \rho(\beta):~~~~~~a&\mapsto& \bbi&\hspace{.5in}& \rho '(\beta):~~~~~~ a&\mapsto& \bbi \\
  b&\mapsto &
e^{(\beta+\epsilon f(\beta)) \bbk} \bbj&&
b&\mapsto &
-e^{(\beta+\epsilon f(\beta)) \bbk} \bbj\\
c&\mapsto & 
e^{(\beta-\epsilon f(\beta)) \bbk} \bbj&&c&\mapsto & 
-e^{(\beta-\epsilon f(\beta)) \bbk} \bbj
\\
d&\mapsto & e^{-2\epsilon f(\beta) \bbk} \bbi&& d&\mapsto & e^{-2\epsilon f(\beta) \bbk} \bbi
\\
h&\mapsto & 
-\bbj e^{(-\epsilon f(\beta) ) \bbk}&&
h&\mapsto & 
\bbj e^{(-\epsilon f(\beta) ) \bbk}\\
p&\mapsto & e^{\epsilon f(\beta) \bbk}&&p&\mapsto & e^{\epsilon f(\beta) \bbk}\\
w&\mapsto & -1&&w&\mapsto & -1
\end{array}
\end{equation}

 We next observe that the  second circle of representations, $\rho'$, is simply a conjugate of a reparametrization of the first one, $\rho$.  
Indeed, from the odd symmetry of the function $f$, and the fact that conjugation by $\bbi$  sends $\bbj\mapsto -\bbj$ and $\bbk\mapsto -\bbk$, a straightforward calculation using (\ref{twocircles}) shows that  $\bbi$ conjugates $\rho'({2\pi-\beta})$ to $ \rho(\beta)$.

Finally, we show that in the circle $\rho $,  distinct values $\beta_1, \beta_2\in [0,2\pi)$ never give conjugate representations (which includes {\em equal representations}).  Suppose that conjugation by some $g\in SU(2)$ sends $\rho({\beta_1}) $ to $\rho({\beta_2})$.  Then conjugation by $g$ fixes $\bbi=\rho({\beta_i})(a)$, so $g=e^{\tau \bbi}$ for some $\tau$.   

Considering the real part of the condition that $g \left( \rho({\beta_1})(p)\right)  g^{-1} = \rho({\beta_2})(p)$, we see that $f(\beta_1)=\pm f(\beta_2)$.  The equation $f(\beta_1)= f(\beta_2)=0 $ only occurs when 
 $\{\beta_1, \beta_2\} = \{ 0, \pi\}$, by our assumptions about the function $f$.  Then consideration of the image of $b$ shows $g\bbj g^{-1}=-\bbj$ (so $g=\pm \bbi$), but  the image of $h$ gives the contradictory condition that $g\bbj g^{-1}=\bbj$.  
 
 For any other pair of $\beta$ values, 
$g \left( \rho({\beta_1})(p)\right)  g^{-1} = \rho({\beta_2})(p)$ is impossible unless 
$g \in \{\pm 1, \pm \bbi\}$.
The cases $g=\pm 1$ imply $\rho({\beta_1})=\rho({\beta_2})$, which is easy to rule out by considering the images of $p$ and $b$.  

If $g=\pm \bbi$, then 
$\rho({\beta_2})(p)=g\left(\rho({\beta_1})(p)\right)g^{-1}$ implies that $f(\beta_2)=-f(\beta_1)$.  But then $\rho({\beta_2})(h)=g\left(\rho({\beta_1})(h)\right)g^{-1}$
implies that $\epsilon f(\beta_2)=-\epsilon f(\beta_1)+ \pi \pmod{2\pi}$, which is impossible given that $\epsilon |f(x)| < \tfrac \pi 2$.  This shows that $\rho(\beta)$, $\beta\in [0,2\pi)$, represent distinct conjugacy classes of representations.

 \end{proof}

    \begin{figure}[H]
\begin{center}

\def\svgwidth{3.2in}
 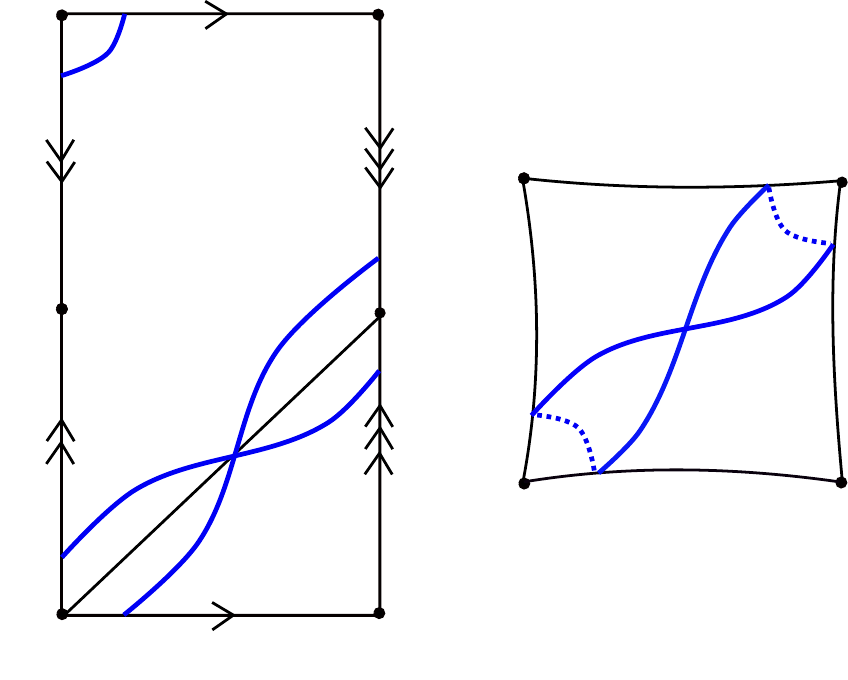

 \caption{\label{fig9} The restriction to the pillowcase of the circle  of perturbed traceless representations of the 3-ball containing a trivial tangle.  The black arc in the figure on the left is the image of the unperturbed traceless representations, and the blue circle is the image of the perturbed traceless representations.}
\end{center}
\end{figure}

Theorem \ref{pertvar} allows us to determine the image $R^\natural_\pi(B^3,A_1\cup A_2)\to R(S^2,\{a,b,c,d\})$, as well as to determine the properties of the limit of  $R^\natural_{\pi}(B^3,A_1\cup A_2)\to R(S^2,\{a,b,c,d\})$ as $\epsilon\to 0$. 
 To emphasize  the dependence on the parameter $\epsilon$, we write 
 $$R^\nat_{(P,f,\epsilon)}(B^3,A_1\cup A_2)\cong\{\rho^\epsilon(\beta)~|~\beta\in \RR/2\pi\ZZ\}$$
 where $\rho^\epsilon $ denotes the   circle of representations of Equation (\ref{twocircles}).

Theorem  \ref{pertvar}  shows that the circle fibers of the restriction $R^\natural(B^3,A_1\cup A_2)\to R(S^2,\{a,b,c,d\})$ have been perturbed away to pairs of points in the restriction 
 $ R^\nat_\pi(B^3, A_1\cup A_2)\to R(S^2,\{a,b,c,d\})$.  We make this more precise in the following Corollary.

 \begin{cor}\label{corimage} Given perturbation data $\pi=(P,f, \epsilon)$ with $P$ as in Figure \ref{fig8}, the restriction map to the pillowcase
  $$ R^\nat_{(P,f,\epsilon)}(B^3, A_1\cup A_2)\to R(S^2,\{a,b,c,d\})$$   is an immersion of 
  the circle $\{\rho(\beta)~|~\beta\in \RR/2\pi\ZZ\}$ for $\epsilon>0$ small.
  Its image is the image of the curve in $\RR^2$,
  $\beta\mapsto
 (\beta+ \epsilon f(\beta)+ \tfrac\pi 2 ,  \beta- \epsilon  f(\beta)+ \tfrac\pi 2 )$
 under the canonical projection of $\RR^2$ to the pillowcase.

The limit   
$$\lim_{\epsilon\to 0}R^\natural_{P, f,\epsilon}(B^3,A_1\cup A_2)\subset R^\natural(B^3,A_1\cup A_2)$$ is a circle consisting of those representations which send $h$ to 
 $\pm \bbj$, i.e.~ those points with $\tau=\pm \tfrac{\pi}{2}$ in Proposition \ref{unpert}. 
 Its projection to the pillowcase is a map from the circle onto the the diagonal  $\beta\mapsto (\beta+\tfrac\pi 2, \beta+\tfrac \pi 2)$, which is a 2-1  immersion except at the corners when $\beta=\tfrac \pi 2$ and $\tfrac {3\pi} 2$. 
 
  \end{cor}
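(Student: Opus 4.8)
The plan is to extract Corollary \ref{corimage} directly from Theorem \ref{pertvar}, using only the branched--cover description of the pillowcase in Proposition \ref{s2prop} and, for the limiting statement, the normal form of Proposition \ref{unpert}. The first thing I would note is that, comparing the formula for $\rho(\beta)$ in Theorem \ref{pertvar} with the parametrization $\psi$ of Proposition \ref{s2prop}, the restriction of $\rho(\beta)$ to $R(S^2,\{a,b,c,d\})$ is exactly $\psi(\gamma(\beta),\theta(\beta))$ with $\gamma(\beta)=\beta+\epsilon f(\beta)+\tfrac\pi2$ and $\theta(\beta)=\beta-\epsilon f(\beta)+\tfrac\pi2$. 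Thus the restriction map is the composite of the smooth loop $\beta\mapsto(\gamma(\beta),\theta(\beta))$ in $\RR^2$ with the projection (\ref{pillowcasemap}), and its image is tautologically the projection of that loop, i.e.\ the curve asserted in the Corollary.

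To see that this composite is an immersion for $\epsilon>0$ small, I would use that (\ref{pillowcasemap}) is a local diffeomorphism away from the branch locus $(\pi\ZZ)^2$, and then check that the loop $(\gamma(\beta),\theta(\beta))$ misses $(\pi\ZZ)^2$: if both coordinates lay in $\pi\ZZ$ then their difference $\gamma(\beta)-\theta(\beta)=2\epsilon f(\beta)$ would lie in $\pi\ZZ$, which for small $\epsilon$ forces $f(\beta)=0$ and hence $\beta\in\pi\ZZ$, but then $\gamma(\beta)=\beta+\tfrac\pi2\notin\pi\ZZ$. Off the branch locus the projection is a local diffeomorphism, so the composite is an immersion precisely when $\beta\mapsto(\gamma(\beta),\theta(\beta))$ is, and the latter has velocity $(1+\epsilon f'(\beta),\,1-\epsilon f'(\beta))$, which is nowhere zero as soon as $\epsilon\sup|f'|<1$. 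This gives the first assertion.

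For the $\epsilon\to0$ statement I would invoke that $R^\nat_{(P,f,0)}(B^3,A_1\cup A_2)=R^\nat(B^3,A_1\cup A_2)$ and that the parametrizations converge entrywise on generators to the family $\rho^0(\beta)$ obtained by setting $\epsilon=0$ in Theorem \ref{pertvar}: $a,d\mapsto\bbi$, $b,c\mapsto e^{(\beta+\pi/2)\bbk}\bbi$, $h\mapsto-\bbj$, $p\mapsto1$, $w\mapsto-1$. Comparing with Proposition \ref{unpert} and conjugating by $1$ or by $\bbi$ as needed to bring $\beta+\tfrac\pi2$ into $[0,\pi]$ (conjugation by $\bbi$ fixes $\bbi$, negates $\bbj$ and $\bbk$, hence sends $h$ to $-h$) identifies $\{\rho^0(\beta)\}$ with the circle of representations sending $h$ to $\pm\bbj$, i.e.\ the $\tau=\pm\tfrac\pi2$ locus of Proposition \ref{unpert}. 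Putting $\epsilon=0$ in $\gamma(\beta),\theta(\beta)$ exhibits its restriction to the pillowcase as $\beta\mapsto\psi(\beta+\tfrac\pi2,\beta+\tfrac\pi2)$, which lies on the diagonal arc $\gamma=\theta$ of Proposition \ref{unpert0}. Finally, $\gamma\mapsto\psi(\gamma,\gamma)$, $\gamma\in[0,\pi]$, is an embedded interval joining the corners $\psi(0,0)$ and $\psi(\pi,\pi)$, and since $\psi(\gamma,\gamma)=\psi(2\pi-\gamma,2\pi-\gamma)$ the induced map $\RR/2\pi\ZZ\to R(S^2,\{a,b,c,d\})$ double covers this interval, the two sheets folding together exactly over the corners $\gamma\in\pi\ZZ$; after the shift $\gamma=\beta+\tfrac\pi2$ the fold points are $\beta\in\{\tfrac\pi2,\tfrac{3\pi}2\}$. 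Away from these the map is locally a diffeomorphism onto the smooth part of the arc (the lift has velocity $(1,1)$), while over each corner the pillowcase has a cone point and a smooth path through it has vanishing derivative, so the immersion property fails precisely at $\beta=\tfrac\pi2,\tfrac{3\pi}2$.

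Conceptually nothing here goes beyond Theorem \ref{pertvar}; the only real work is bookkeeping. The step most likely to hide a sign or indexing error is the third paragraph --- correctly identifying the limiting circle inside the $2$--sphere $R^\nat(B^3,A_1\cup A_2)$ of Proposition \ref{unpert} by tracking which conjugations restore the normal form, and thereby pinning down both the value $\tau=\pm\tfrac\pi2$ and the precise location $\beta\in\{\tfrac\pi2,\tfrac{3\pi}2\}$ of the two fold points on the diagonal.
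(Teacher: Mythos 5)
Your proof is correct and takes essentially the same approach as the paper: the one non-immediate claim --- that the circle is immersed in the pillowcase for small $\epsilon>0$ --- is established by the identical branch-point avoidance argument (solving $\epsilon f(\beta)=\tfrac\pi2(k-\ell)$ forces $f(\beta)=0$ while $\beta+\tfrac\pi2\in\pi\ZZ$ forces $\beta$ to be an odd multiple of $\tfrac\pi2$, a contradiction), while the remaining assertions follow from the formulas of Theorem \ref{pertvar}, which you spell out in more detail than the paper's terse ``follow immediately.'' One small remark, since you propagated it from the corollary statement itself: in the normal form $h\mapsto e^{\tau\bbi}\bbj$ of Proposition \ref{unpert}, the condition $h\mapsto\pm\bbj$ corresponds to $\tau\in\{0,\pi\}$ (since $e^{0\bbi}\bbj=\bbj$ and $e^{\pi\bbi}\bbj=-\bbj$), not $\tau=\pm\tfrac\pi2$, which would give $h\mapsto\pm\bbk$; this appears to be a misprint in the corollary that carried over into your third paragraph.
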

 \begin{proof}
The statements all follow immediately from the formulas of  Theorem \ref{pertvar} except possibly the fact that the circle $ \{\rho ^\epsilon(\beta)\}$ is immersed in the pillowcase when $\epsilon>0$ is small enough. 

This follows from the fact that the smooth embedding
 $$\RR\to \RR^2, ~ \beta\mapsto
 (\beta+ \epsilon f(\beta)+ \tfrac\pi 2 ,  \beta- \epsilon  f(\beta)+ \tfrac\pi 2 )$$
 avoids the branch points $(\pi\ZZ)\times(\pi\ZZ)$ in the branched cover $\RR^2\to R(S^2,\{a,b,c,d\})$ of the pillowcase. 
 
 To see why this is true, recall that the branch points are the points $\{(\pi k,\pi\ell)~|~k,\ell\in \ZZ\}$. Suppose  $(\beta+ \epsilon f(\beta)+ \tfrac\pi 2 ,  \beta- \epsilon  f(\beta)+ \tfrac\pi 2 )=(\pi k,\pi\ell)$, then  $\epsilon f(\beta)=\tfrac \pi 2(k-\ell)$, and hence $k=\ell$ and $f(\beta)=0$.  But we also have $\beta+\tfrac\pi 2 =\tfrac \pi 2(k+\ell)$, so that $\beta=
 \tfrac \pi 2 (2\ell-1)$ and hence $f(\beta)\ne 0$, since $f$ only vanishes for multiples of $\pi$.
  \end{proof}

Figure \ref{fig9} illustrates the situation when $f(x)=\sin(x)$ and $\epsilon=0.2$.  The immersed blue circle (which by abuse of notation we also denote $\rho $) has one double point, corresponding to the  parameter values  $\beta=0,\pi$.  The black diagonal arc corresponds to the image of the unperturbed moduli space, each point in the interior of the black arc corresponding to a latitudinal circle in the 2-sphere $R^\nat(B^3,A_1\cup A_2)$ and the endpoints corresponding to the poles. As $\epsilon$ shrinks towards $ 0$, the circle approaches the black curve, with pairs of points collapsing to a single point along the interior of the arc. In the limit, the  pair of distinct representations on each circle fiber over the black curve are the two unperturbed representations of Proposition \ref{unpert} sending $h$ to $\bbj$ and $-\bbj$ and which project to the given point on the black arc in the pillowcase.

 Notice that  the restriction map $R^\natural_\pi(B^3,A_1\cup A_2)\to R(S^2,\{a,b,c,d\})$ is not an embedding and encloses zero {\em signed} area in the pillowcase.  This can   presumably   be explained by the fact that the image has to lift to a  Legendrian circle with respect to a  natural connection in a Chern-Simons $U(1)$ bundle over $R(S^2,\{a,b,c,d\})$ as in the situation for closed surfaces, c.f.~\cite{RSW, herald}.

\section{The  intersection picture} \label{lip} 

The following corollary summarizes the results of Theorem \ref{pertvar} and Corollary
\ref{corimage} in a statement that suggests an  intersection picture corresponding to   Diagram  (\ref{SVKD}).  Perturbing along the unknotted curve $P$ in a 3-ball  corrects, in a manner which is independent of the pair $(Y,K)$,  for the problem that the unperturbed Chern-Simons functional is never Morse.

\begin{cor}\label{Lagrangian} Suppose that $K\subset Y$ is a knot and $B^3\subset Y$ is a 3-ball intersecting $K$ transversally in two trivial arcs $A=A_1\cup A_2$.  Let $(Y_0,K_0):=(Y\setminus B^3, K\setminus A)$.

Assume that $R(Y_0,K_0)$ is a smooth 1-manifold away from a finite number of points and that the restriction to the pillowcase $$R(Y_0,K_0)\to R(S^2,\{a,b,c,d\})$$ is an immersion transverse to the arc $\theta=\gamma$ on the manifold points, and takes the non-manifold points outside a neighborhood of the arc $\theta=\gamma$.    Then for small enough choice of perturbation, the intersection in the pillowcase of
$R(Y_0,K_0)$ and the circle $\rho$ is transverse, and hence the set $R^\nat_\pi(Y,K)$ is non-degenerate (and finite).

\qed 
\end{cor}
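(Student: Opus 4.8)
The plan is to realize $R^\nat_\pi(Y,K)$ as a fiber product over the pillowcase $R(S^2,\{a,b,c,d\})$ and then to push the transversality hypothesis on $R(Y_0,K_0)$ through the limit $\epsilon\to0$. Write $\Gamma_\epsilon\subset R(S^2,\{a,b,c,d\})$ for the image of $R^\nat_\pi(B^3,A_1\cup A_2)$ --- this is the immersed circle called $\rho$ in the statement --- and $r\colon R(Y_0,K_0)\to R(S^2,\{a,b,c,d\})$ for the restriction map. First I would observe that, since the earring $H\cup W$ and the perturbation curve $P$ both lie inside $B^3$, every class in $R^\nat_\pi(B^3,A_1\cup A_2)$ has a representative with $a\mapsto\bbi$ and $h\mapsto -\bbj e^{-\epsilon f(\beta)\bbk}$ (Theorem \ref{pertvar}), hence is non-abelian with stabilizer $\{\pm1\}$. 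Therefore the proof of Proposition \ref{natcase}, which only uses $\Stab(\rho_2)=\pm1$ together with Lemma \ref{double}, applies verbatim with $R^\nat_\pi(B^3,A_1\cup A_2)$ in place of $R^\nat(Y_2,K_2)$, giving a surjection
$$R^\nat_\pi(Y,K)\ \lto\ R(Y_0,K_0)\times_{R(S^2,\{a,b,c,d\})}R^\nat_\pi(B^3,A_1\cup A_2)$$
whose fibers are single points, except possibly over non-abelian classes of $R(Y_0,K_0)$ that restrict to one of the four corner points of the pillowcase, where the fiber is a circle.

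Next I would invoke Corollary \ref{corimage}: for all small $\epsilon>0$ the circle $\Gamma_\epsilon$ is the image of $\beta\mapsto(\beta+\epsilon f(\beta)+\tfrac\pi2,\ \beta-\epsilon f(\beta)+\tfrac\pi2)$, which, since $|\epsilon f|\le\epsilon$, lies in an arbitrarily small neighborhood of the diagonal arc $\Gamma_0=\{\gamma=\theta\}$, and which moreover avoids all four corner points. Choosing this neighborhood small enough to also be disjoint from the finitely many images under $r$ of the non-manifold points of $R(Y_0,K_0)$ --- possible by hypothesis --- I would conclude that for $\epsilon$ small every point of $r(R(Y_0,K_0))\cap\Gamma_\epsilon$ lies in the smooth locus of the pillowcase and is the image of a manifold point of $R(Y_0,K_0)$. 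In particular the exceptional circle fibers above never occur, so $R^\nat_\pi(Y,K)$ is identified with the set of pairs in $R(Y_0,K_0)\times R^\nat_\pi(B^3,A_1\cup A_2)$ with a common image in the pillowcase; such a pair is non-degenerate precisely when the two immersed 1-manifolds $r(R(Y_0,K_0))$ and $\Gamma_\epsilon$ meet transversally at that common image point.

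For the transversality itself I would note that the tangent direction of $\Gamma_\epsilon$ at parameter $\beta$ is proportional to $(1+\epsilon f'(\beta),\,1-\epsilon f'(\beta))$, which, $f'$ being bounded, converges uniformly in $\beta$ as $\epsilon\to0$ to the constant direction $(1,1)$ of $\Gamma_0$; thus $\Gamma_\epsilon$ converges in $C^1$ to the (doubly covered) arc $\Gamma_0$. Since $R(Y_0,K_0)$ is compact (a variety of $SU(2)$ representations of a finitely generated group) and $r$ is an immersion transverse to $\Gamma_0$ at every manifold point lying over $\Gamma_0$, compactness of the closed set $r^{-1}(\Gamma_0)$ yields a positive lower bound on the transversality angle there; combined with the previous step, which confines $r^{-1}(\Gamma_\epsilon)$ to a fixed small neighborhood of $r^{-1}(\Gamma_0)$ for $\epsilon$ small, openness of transversality forces $r$ to meet $\Gamma_\epsilon$ transversally at every intersection point once $\epsilon$ is small enough. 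A transverse intersection of compact 1-manifolds in a surface is finite, and this is exactly the non-degeneracy of $R^\nat_\pi(Y,K)$ in the sense adopted in the subsection on non-degeneracy above; hence $R^\nat_\pi(Y,K)$ is non-degenerate and finite.

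The main obstacle is the limiting/compactness bookkeeping behind the last two steps: one must rule out that an intersection point of $r(R(Y_0,K_0))$ with $\Gamma_\epsilon$ degenerates as $\epsilon\to0$ by running into the non-manifold locus of $R(Y_0,K_0)$ or into a corner of the pillowcase, at which neither ``immersion'' nor ``transverse'' is meaningful. I would handle both at once by lifting the whole picture to the branched cover $\RR^2\to R(S^2,\{a,b,c,d\})$: in the rotated coordinates $(\gamma+\theta,\gamma-\theta)$ the arc $\Gamma_0$ lifts to a straight line and $\Gamma_\epsilon$ to the uniformly $C^1$-small graph $v=2\epsilon f((u-\pi)/2)$ over it, the corners lift to a discrete lattice, and the non-manifold locus of $R(Y_0,K_0)$ lifts to a compact set at positive distance from that line by hypothesis; a standard sequential-compactness argument there then closes the gap.
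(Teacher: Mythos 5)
Your proposal is correct and follows essentially the same route the paper intends. The paper states this corollary without an explicit proof, presenting it as a summary of Theorem~\ref{pertvar}, Corollary~\ref{corimage}, and the fiber-product mechanism of Proposition~\ref{natcase}/Lemma~\ref{double}; you have filled in precisely those steps, including the small-angle/$C^1$-convergence argument needed to pass from transversality with the diagonal arc $\Gamma_0$ to transversality with the nearby immersed circle $\Gamma_\epsilon$, and the observation that $\Gamma_\epsilon$ avoids the pillowcase corners so the exceptional circle fibers of Proposition~\ref{natcase} never occur. The one caveat, which the paper shares, is that the final identification of pillowcase transversality with non-degeneracy in the sense of the cohomological/Hessian definition is asserted rather than proved here; the paper explicitly defers that equivalence to subsequent work, so your reliance on it matches the paper's own level of rigor.
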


In complete generality, achieving the conditions on $R(Y_0,K_0)$ will also involve suitable perturbations along curves in $Y_0\setminus K_0$.  One of our primary goals is   to ``combinatorialize'' the instanton homology  of a knot  in terms of the intersection picture given by Diagram (\ref{SVKD}) and Corollary \ref{Lagrangian}.    In later work we will explore the calculation of   gradings, and explore  differentials in terms of this picture.    The reader should look at Figures \ref{fig12} and \ref{fig14} below
 for an illustration of Corollary \ref{Lagrangian}.

 \bigskip

 \section{The unreduced case}\label{unreduced case}
 Kronheimer-Mrowka construct two versions of their singular instanton homology for $(Y,K)$. The first, {\em reduced} instanton homology $I^\nat(Y,K)$,  corresponds to 
 taking the connected sum $K^\nat$ of $K$ with a Hopf link and working with an $SO(3)$ bundle which is non-trivial on the torus which separates the  components of the Hopf link.  For this version of their theory,  the critical set of the Chern-Simons functional is identified with $R^\nat(Y,K)$. For knots in $S^3$ with simple representation varieties (or for all knots, after applying a perturbation outside a neighborhood of $K$ which contains $H$ and $W$),  $R^\nat(S^3,K)$ consists of one circle for each non-abelian point in the space $R(S^3,K)$ of traceless representations, and  one non-degenerate isolated point corresponding to the abelian representation in $R(S^3,K)$. Theorem \ref{pertvar}  then shows how to perturb along one curve $P$ to turn each circle into a non-degenerate pair of isolated points.
 
 The second version, {\em unreduced} instanton  homology $I^\sharp(Y,K)$  corresponds to 
 taking the disjoint union  $K^\sharp$ of $K$ with a Hopf link $H_1\cup H_2$ rather than the connected sum.  The critical set of the corresponding Chern-Simons functional is $R^\sharp(Y,K)=R^\nat(Y, K\cup H_1)$.  For knots in $S^3$ with simple representation varieties (or for all knots, after further perturbation),  $R^\sharp(S^3,K)$ consists of a copy of $SO(3)$   for each non-abelian point in the space $R(S^3,K)$ of traceless representations, and  one 2-sphere corresponding to the abelian representation in $R(S^3,K)$. This follows quickly from Lemma \ref{double} by decomposing along the 2-sphere separating the Hopf link from $K$, by similar but easier versions of Propositions \ref{circle or point} and \ref{natcase}.  In this section we  prove a counterpart to Theorem \ref{pertvar} for  $R^\sharp(Y,K)$ by using two perturbation curves. 
 
\medskip
 
Given a knot (or link) $K$ in a 3-manifold $Y$, consider a 3-ball $B^3\subset Y$ intersecting $K$ in two unknotted arcs $A_1\cup A_2$. Place the Hopf link $H$ inside $B^3$ and place an arc $W$ spanning the two components of the Hopf link, as shown by the black curves in Figure \ref{unreduced}. 

As Kronheimer-Mrowka observe in \cite{KM1,KM-khovanov}, every conjugacy class of representations which take the meridians of $K$ and $H$ to traceless matrices and the meridian of $W$ to $-1$ can be uniquely conjugated so that the  two meridians of $H$ are sent to $\bbi$ and $\bbj$ respectively. This shows that the corresponding unperturbed representation space $R^\sharp(Y,K)$ is homeomorphic to the space $\tilde R(Y,K)$ of all traceless representations of $\pi_1(Y\setminus K)$ (not conjugacy classes).  Since the orbits of the $SU(2)$ conjugation action on $\tilde R(Y,K)$ have dimension greater than 1, $R^\sharp(Y,K)$ is never finite.  For knots in $S^3$ with simple representation varieties, $R^\sharp(Y,K)$ is a disjoint union of a 2-sphere corresponding to the distinguished representation $\alpha$ and a number of copies of $SO(3)=SU(2)/\pm1$.

 Place two smaller 3-balls $B^3_1$ and $B^3_2$ in the interior of $B^3$ as illustrated in Figure \ref{unreduced}.  Denote their boundaries by $S^2_i=\partial B^3_i$.  Place a perturbation curve  $P_1$ inside $B^3_1$ and a second perturbation curve $P_2$ inside $B^3_2$, as indicated.  Label the various meridians $a,b,c,d,m,n,$ and $p_2$, as indicated.

     \begin{figure}[H]
\begin{center}

\def\svgwidth{3.7in}
 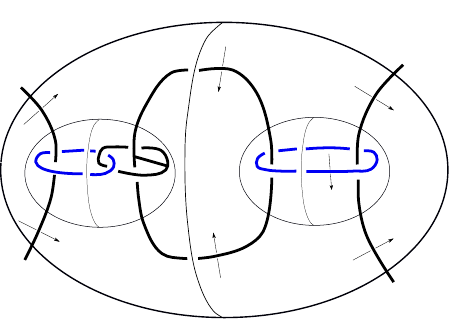
  \caption{The ``$\sharp$" construction and two perturbation curves. The black Hopf link in the center and the arc connecting its components represents the additional data defining the $\sharp$ construction relevant to unreduced singular instanton homology.  The holonomy perturbation takes place in the neighborhood of the blue circles $P_1$ and $P_2$. \label{unreduced}}
\end{center}
\end{figure} 
 
 Fix  $\epsilon>0$, and choose the perturbation function for the curve $P_1$ to be 
 $\epsilon\sin(x)$. For the curve $P_2$, take the perturbation function $2\epsilon \sin(x)$.   With this perturbation data $\pi$ in place, we obtain a space 
 $R^\sharp_\pi(B^3, A_1\cup A_2)$ and a restriction map to the pillowcase:
 $$R^\sharp_\pi(B^3, A_1\cup A_2)\to R(S^2,\{m,b,c,n\}).$$

Let $\arcsin:[-1,1]\to [-\frac{\pi}{2},\frac{\pi}{2}]$ denote the inverse sine function and define two smooth functions
\begin{equation}\label{taudefs}\begin{split}
&\tau_1:S^1\to [-\tfrac{\pi}{6},\tfrac{\pi}{6}],~\tau_1(\beta)=\arcsin(-\tfrac{1}{2}\sin(\beta)),
\text{~and}\\
& \tau_2 :S^1\to [\tfrac{5\pi}{6},\tfrac{7\pi}{6}], ~\tau_2(\beta)=\pi-\tau_1(\beta).
 \end{split}\end{equation}
  The set $\{(\beta,\tau)\in S^1\times S^1~|~ \sin\beta=-2\sin \tau\}$ is precisely the disjoint union of the graphs of $\tau_1(\beta)$ and $\tau_2(\beta)$.

 \begin{thm}\label{pertvar2} The space $R^\sharp_\pi(B^3, A_1\cup A_2)$ is the disjoint union of two   circles, parameterized by $\beta\in S^1$ and $i=1,2$. These satisfy 
 $$m\mapsto \bbi,~ b\mapsto e^{ -\tau_i(\beta)\bbk}e^{(\frac{\pi}{2} +\beta +\epsilon\sin\beta)\bbk}\bbi,~
 c\mapsto e^{- \tau_i(\beta)\bbk}e^{(\frac{\pi}{2} +\beta -\epsilon\sin\beta)\bbk}\bbi,~
 n\mapsto e^{-2\epsilon\sin(\beta)\bbk}\bbi$$
 and 
 $$a\mapsto e^{-\tau_i(\beta)\bbk}\bbi, ~d\mapsto e^{(-\tau_i(\beta)-2\epsilon\sin(\beta))\bbk}\bbi.$$
 In particular the restriction map 
 $R^\sharp_\pi(B^3, A_1\cup A_2)\to R(S^2,\{m,b,c,n\})$ is given by 
 $$\rho_i(\beta)= 
 \big(-\tau_i(\beta) + \frac{\pi}{2} +\beta +\epsilon\sin\beta,
 -\tau_i(\beta) + \frac{\pi}{2} +\beta -\epsilon\sin\beta\big), ~ i=1,2.$$
 \end{thm}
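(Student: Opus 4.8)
The plan is to prove Theorem \ref{pertvar2} as the $\sharp$--counterpart of Theorem \ref{pertvar}, following the same four moves (present the group, normalize, impose the perturbation conditions, check irredundancy) but now carrying two perturbation curves and the extra angular parameter that the $\sharp$ construction introduces. First I would use the Seifert--Van Kampen theorem to present $\pi_1\big(B^3\setminus(A_1\cup A_2\cup H\cup W\cup P_1\cup P_2)\big)$ on generators $a,b,c,d,m,n,w,p_1,p_2$ read off Figure \ref{unreduced}, with relations of the same three types as in (\ref{eqstep0}): a relation $d=\bar c b a$ (up to relabeling) coming from the four--punctured sphere $S^2$ with punctures $m,b,c,n$; two commutator relations $[\lambda_{P_j},p_j]=1$, $j=1,2$, from the boundary tori of the neighborhoods of $P_1,P_2$; and a commutator relation expressing $w$ in terms of the two meridians of the Hopf link $H$.

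Next I would normalize. As recalled in the paragraph preceding the theorem (see \cite{KM1,KM-khovanov}), any representation sending the meridians of $H$ to $C(\bbi)$ and $w$ to $-1$ is uniquely conjugate to one sending the two Hopf meridians to $\bbi$ and $\bbj$; this pins down $m\mapsto\bbi$ and exhausts the conjugation freedom. Consequently, applying the normal form of Proposition \ref{s2prop} to the restriction to $R(S^2,\{m,b,c,n\})$ now costs an extra parameter $\tau\in S^1$: the images of $a,b,c,d$ all acquire the common left factor $e^{-\tau\bbk}$, where $\tau$ records the residual rotation that would bring the $S^2$--restriction into the standard position of Proposition \ref{s2prop}. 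Writing that restriction in coordinates $(\gamma,\theta)$ as in that proposition, a representation in $R^\sharp_\pi(B^3,A_1\cup A_2)$ is then determined, up to the redundancies analyzed at the end of the proof of Theorem \ref{pertvar}, by $(\gamma,\theta,\tau)$ together with the axes along which $\rho(p_1),\rho(p_2)$ lie and the longitude angles $\beta_1,\beta_2$ of $P_1,P_2$.

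Then I would impose the perturbation conditions. The curve $P_1$ carries the odd, $2\pi$--periodic function $\epsilon\sin$, so the analysis of Equations (\ref{eqstep6})--(\ref{eqstep12}) goes through essentially verbatim (with $\beta=\beta_1$) and yields $\theta\equiv\gamma-2\epsilon\sin\beta$ together with $\gamma\equiv\beta+\epsilon\sin\beta+\tfrac\pi2\pmod\pi$, relative to the normalization; this already determines the $\beta$--dependence of $(\gamma,\theta)$ up to the global shift $e^{-\tau\bbk}$, reproducing the $\beta+\epsilon\sin\beta$ and $\beta-\epsilon\sin\beta$ terms in the stated formulas for $b,c$. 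Running the same $\Real(\cdot)=0$ computations on the relations involving $p_2$ and $n$, with $P_2$'s function $2\epsilon\sin$, produces the single additional scalar constraint $\sin\beta=-2\sin\tau$ coupling $\tau$ to $\beta$; by the remark just before the theorem, its solution set is exactly the disjoint union of the graphs of $\tau_1(\beta)=\arcsin(-\tfrac12\sin\beta)$ and $\tau_2(\beta)=\pi-\tau_1(\beta)$. Substituting $\tau=\tau_i(\beta)$ gives the two families $\rho_i(\beta)$, and reading off their restriction to $R(S^2,\{m,b,c,n\})$ via Proposition \ref{s2prop} yields the asserted coordinates $\rho_i(\beta)=\big(-\tau_i(\beta)+\tfrac\pi2+\beta+\epsilon\sin\beta,\ -\tau_i(\beta)+\tfrac\pi2+\beta-\epsilon\sin\beta\big)$. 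As in the final paragraphs of the proof of Theorem \ref{pertvar}, one then checks that distinct $\beta$ give non-conjugate representations on each circle, that the two circles are disjoint (the graphs of $\tau_1,\tau_2$ are disjoint since $\tau_1(\beta)\in[-\tfrac\pi6,\tfrac\pi6]$ and $\tau_2(\beta)\in[\tfrac{5\pi}6,\tfrac{7\pi}6]$), and that together they exhaust $R^\sharp_\pi(B^3,A_1\cup A_2)$; these are short arguments using the images of $p_1,p_2$ and the oddness of $\sin$, together with $\epsilon|\sin|<\tfrac\pi2$.

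The main obstacle is bookkeeping rather than conceptual: with two perturbation curves one must, for each, track which great circle in $C(\bbi)$ contains the axis of $\rho(p_j)$ and eliminate the degenerate configurations (those where some relevant sine vanishes, or where a conjugating element would be forced to be a pure quaternion), i.e. a doubled version of the case analysis around (\ref{eqstep7})--(\ref{eqstep10prime}). The one genuinely new point is deriving the coupling $\sin\beta=-2\sin\tau$ from the $P_2$ relations and verifying that this is the \emph{only} surviving constraint that $P_2$ adds --- in particular that $P_2$ imposes nothing further on $(\gamma,\theta)$ beyond what $P_1$ already forces --- so that the end result is two circles rather than a finite set.
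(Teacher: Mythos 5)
Your overall plan has the right ingredients (normalize, analyze $P_1$, derive the coupling $\sin\beta=-2\sin\tau$ from $P_2$, check irredundancy), but the normalization step contains a genuine error and misses the key structural shortcut that the paper uses.

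The error: $m$ is a meridian of the arc $A_1$ on the right side of $B^3$, not a meridian of either Hopf--link component. The unique conjugation to Hopf meridians $\bbi$ and $\bbj$ that you invoke from \cite{KM1,KM-khovanov} exhausts conjugation freedom without saying anything about the image of $m$; in that frame $\rho(m)$ is an arbitrary element of $C(\bbi)$. So the statement ``this pins down $m\mapsto\bbi$'' is false, and the rest of your normalization paragraph (the claim that $a,b,c,d$ acquire a common left factor $e^{-\tau\bbk}$ recording ``residual rotation'') does not hang together once that is removed. Moreover, if you really adopt the Hopf--meridian normalization you no longer have $\rho(a)=\bbi$, so Equations (\ref{eqstep6})--(\ref{eqstep12}) from the proof of Theorem \ref{pertvar} do \emph{not} go through verbatim for $P_1$ --- they were derived under the assumption $a\mapsto\bbi$, which your frame does not supply.

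The paper avoids all of this by decomposing along the two internal spheres $S^2_1$ and $S^2_2$ and applying Theorem \ref{pertvar} as a black box to $S^2_1$ (which contains $H$, $W$, and $P_1$). This immediately produces a representative of $\rho$ with $a\mapsto\bbi$, $d\mapsto e^{-2\epsilon\sin\beta\,\bbk}\bbi$, and so on, using up conjugation freedom. Then Proposition \ref{s2prop} applied to $S^2$ (or $S^2_2$) shows $m\mapsto e^{\tau\bbk}\bbi$ and $n\mapsto e^{(\tau-2\epsilon\sin\beta)\bbk}\bbi$ for some free parameter $\tau$ --- this $\tau$ is precisely what is \emph{not} pinned down until $P_2$ is analyzed. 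The group $\pi_1(B^3_2\setminus(K\cup H\cup P_2))$ is generated by $a,d,p_2,m,n$ with relations $d=\bar p_2 a p_2$, $n=\bar p_2 m p_2$, $[\bar a m, p_2]=1$; the longitude $\bar a m$ is sent to $e^{-\tau\bbk}$, the perturbation condition forces $p_2\mapsto e^{-2\epsilon\sin\tau\,\bbk}$, and the relation $d=\bar p_2 a p_2$ then yields $\sin\beta=-2\sin\tau$. Your identification of that constraint (and of the two branches $\tau_1,\tau_2$) is correct, but you omit the final conjugation by $e^{-\tau/2\,\bbk}$ that converts the intermediate normalization ($a\mapsto\bbi$, $m\mapsto e^{\tau\bbk}\bbi$) into the stated one ($m\mapsto\bbi$, $a\mapsto e^{-\tau\bbk}\bbi$) and produces the formulas in the theorem. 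I would rewrite the normalization step along these lines, citing Theorem \ref{pertvar} for the $S^2_1$ piece rather than re-running the whole case analysis.
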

 \begin{proof} 
 
 Choose $\rho\in R^\sharp_\pi(B^3, A_1\cup A_2)$.
  Theorem \ref{pertvar} applied to the sphere $S^2_1$ 
implies that $\rho$ may be uniquely conjugated to to a representation satisfying 
 \begin{equation*}
  a\mapsto \bbi, ~b\mapsto e^{\left(\tfrac \pi 2 + \beta + \epsilon \sin(\beta) \right) \bbk}\bbi, ~c\mapsto e^{\left(\tfrac \pi 2 + \beta -\epsilon \sin(\beta) \right)\bbk}\bbi, ~d\mapsto e^{ -2\epsilon \sin(\beta)\bbk}\bbi  \end{equation*}
for some $\beta\in [0,2\pi)$. 
 Proposition \ref{s2prop}, applied to $S^2$ (or $S^2_2$), then implies that there exists a $\tau\in S^1$ so that  $$m\mapsto e^{\tau\bbk}\bbi\text{~and~} n\mapsto e^{(\tau-2\epsilon\sin\beta)\bbk}\bbi.$$
 
  The fundamental group $\pi_1(B^3_2\setminus(K\cup H\cup P_2))$ is generated by $a,d,p_2, m,n$ subject to the relations $d=\bar{p}_2ap_2, n=
  \bar{p}_2mp_2$, and $[\bar am, p_2]=1$. 
   The longitude of $P_2$ is equal to $\bar a m$, which is sent to 
 $$-\bbi e^{\tau\bbk}\bbi=e^{-\tau\bbk}.$$
 The perturbation condition for $P_2$ then says that the meridian $p_2$ is sent to $e^{-2\epsilon\sin(\tau)\bbk}$. 
  The relation $d=\bar{p}_2ap_2$ then implies
 $$e^{ -2\epsilon \sin(\beta)\bbk}\bbi= e^{2\epsilon\sin(\tau)\bbk}\bbi e^{-2\epsilon\sin(\tau)\bbk}=e^{4\epsilon\sin(\tau)\bbk}\bbi$$
 so that (since $\epsilon$ is small)
 $\sin\beta=-2\sin\tau$.
 The relation $ n=
  \bar{p}_2mp_2$  places the same restriction  $\sin\beta=-2\sin\tau$.
   Thus $\tau=\tau_1(\beta)$ or $\tau=\tau_2(\beta)$.

Conversely, given any $\tau$ satisfying  $\sin\beta=-2\sin\tau$, the assignment
\begin{equation}\label{conjugated}\begin{split}
 a\mapsto \bbi, ~b\mapsto e^{\left(\tfrac \pi 2 + \beta + \epsilon \sin(\beta) \right) \bbk}\bbi, ~c\mapsto e^{\left(\tfrac \pi 2 + \beta -\epsilon \sin(\beta) \right)\bbk}\bbi,  ~d\mapsto e^{ -2\epsilon \sin(\beta)\bbk}\bbi, \\
 m\mapsto e^{\tau\bbk}\bbi,~ n\mapsto e^{(\tau-2\epsilon\sin\beta)\bbk}\bbi,~
 p_2\mapsto  e^{-2\epsilon\sin(\tau)\bbk}\end{split}\end{equation}
 uniquely defines a representation whose conjugacy class lies in $R^\nat_\pi(B^3, A_1\cup A_2\cup H)$.
 
 Conjugating by $e^{-\frac{\tau}{2}\bbk}$ completes the proof.
  \end{proof}
 
  Figure \ref{unpertpic} illustrates the image of the two circles of Theorem \ref{pertvar2}, as well as the circle of Theorem \ref{pertvar}.  The red  and green circles correspond to $\rho_1$ and $\rho_2$, respectively, from  Theorem \ref{pertvar2}. The blue circle corresponds to the circle $\rho$ of Theorem \ref{pertvar} and Corollary \ref{corimage}. In this figure we used a moderately sized perturbation ($\epsilon=.4$) to highlight the fact that these three circles map to three distinct (but close) immersed circles in the pillowcase. It is straightforward to check that (just as in the case of $\rho$) for $i=1,2$, the map $\rho_i:S^1\to R(S^2, \{m,b,c,n\})$ is an immersion with a single double point corresponding to $\rho_i(0)=\rho_i(\pi)$.
  As $\epsilon\to 0$, each circle  limits to a generically 2-1 map onto the diagonal arc $\gamma=\theta$.
 
    \begin{figure}[H]\centering
\def\svgwidth{2.3in}
 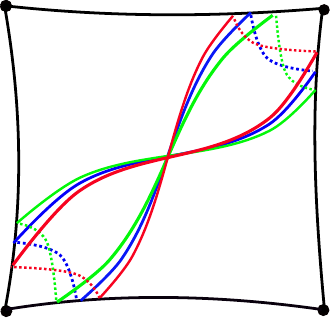

 \caption{Figure illustrating Theorem \ref{pertvar2}.  The theorem analyzes the perturbed traceless representations of the ball from Figure \ref{unreduced}.  There are two circles, $\rho_1$ and $\rho_2$, of such representations, and the figure shows their restriction to  the pillowcase.  These are the red and green circles.  Also shown (in blue) is the image of the circle of representations $\rho$, from  Theorem \ref{pertvar}. \label{unpertpic}}
\end{figure}
  
 In particular, Corollary  \ref{Lagrangian} has the following unreduced counterpart.
 
 \begin{cor}\label{Lagrangian2}\label{corpert2} Suppose that $K\subset Y$ is a knot and $B^3\subset Y$ is a 3-ball intersecting $K$ transversally in two trivial arcs $A=A_1\cup A_2$.  

Assume that $R(Y_0,K_0)=R(Y\setminus B^3, K\setminus A)$ is a smooth 1-manifold away from a finite number of points and that the restriction to the pillowcase $$R(Y_0,K_0)\to R(S^2,\{a,b,c,d\})$$ is an immersion transverse to the arc $\theta=\gamma$ on the manifold points, and takes the non-manifold points outside a neighborhood of the arc $\theta=\gamma$ (or that a suitable perturbation in $Y_0\setminus K_0$ has been applied to achieve these conditions).

Then for a small enough choice of perturbation, the intersection in the pillowcase of
$R(Y_0,K_0)$ and each circle  $\rho_1,\rho_2$ is transverse, and hence the set $R^\sharp_\pi(Y,K)$ is non-degenerate (and finite), and contains two points for each point of   $R^\nat_\pi(Y,K)$.
\qed 
\end{cor}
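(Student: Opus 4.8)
\emph{Strategy.} The plan is to mirror the (essentially immediate) proof of Corollary~\ref{Lagrangian}, with Theorem~\ref{pertvar2} and the pair of circles $\rho_1,\rho_2$ playing the role there played by Theorem~\ref{pertvar} and the single circle $\rho$. First I would record the $\sharp$--analogue of the gluing picture of Section~\ref{gluing}. Decomposing $(Y,K)$ --- with its $\sharp$--decoration $H\cup W$ placed inside $B^3$ --- along the Conway sphere $S^2=\partial B^3$ (whose four punctures are the points labelled $m,b,c,n$ in Figure~\ref{unreduced}) and applying Lemma~\ref{double} exactly as in the proof of Proposition~\ref{natcase}, but now with $\Stab=\{\pm1\}$ on the $B^3$ side since $H\cup W\subset B^3$, one gets a surjection
$$R^\sharp_\pi(Y,K)\to R(Y_0,K_0)\times_{R(S^2,\{a,b,c,d\})}R^\sharp_\pi(B^3,A_1\cup A_2)$$
whose fiber over a pair is a single point whenever the common restriction to $S^2$ is not one of the four abelian (corner) classes of the pillowcase, and a circle (or a point) otherwise. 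This is the ``easier version of Proposition~\ref{natcase}'' alluded to in Section~\ref{unreduced case}.

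\emph{Reducing to a pillowcase intersection.} Next I would insert Theorem~\ref{pertvar2}, which identifies $R^\sharp_\pi(B^3,A_1\cup A_2)$ with the two disjoint circles $\rho_1,\rho_2$ and their explicit immersions $\beta\mapsto(-\tau_i(\beta)+\tfrac\pi2+\beta+\epsilon\sin\beta,\ -\tau_i(\beta)+\tfrac\pi2+\beta-\epsilon\sin\beta)$ into the pillowcase. The lattice computation from the proof of Corollary~\ref{corimage} applies verbatim: if such a point were a branch point $(\pi k,\pi\ell)$ then $2\epsilon\sin\beta=\pi(k-\ell)$ would force $k=\ell$ and $\sin\beta=0$, hence $\beta\in\pi\ZZ$ and $\tau_i(\beta)\in\{0,\pi\}$, after which $-\tau_i(\beta)+\tfrac\pi2+\beta=\pi k$ has no solution. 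So for $\epsilon>0$ small the images of $\rho_1$ and $\rho_2$ avoid all four corners, and the surjection above restricts to a homeomorphism of $R^\sharp_\pi(Y,K)$ onto the honest set-theoretic intersection, inside the pillowcase, of the immersed $1$--manifold $R(Y_0,K_0)$ with $\rho_1\cup\rho_2$.

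\emph{Transversality, finiteness, and the count.} By hypothesis $R(Y_0,K_0)$ is immersed in the pillowcase, transverse to the diagonal arc $\{\theta=\gamma\}$ at its finitely many crossings (all manifold points) and keeping the non-manifold points out of a fixed neighborhood of that arc. By Theorem~\ref{pertvar2} and the discussion following it, $\rho_1$ and $\rho_2$ each converge in $C^\infty$, as $\epsilon\to0$, to a generically 2-1 parametrization of the diagonal arc, so $\rho_1\sqcup\rho_2$ converges to a generically 4-1 parametrization. Openness of transversality then gives, for $\epsilon$ small, that $R(Y_0,K_0)$ meets $\rho_1\cup\rho_2$ transversally and in finitely many points, all clustered near the crossings of $R(Y_0,K_0)$ with the diagonal. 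Feeding this back through the fiber product of the first step --- whose fibers are points, the corners being excluded --- shows $R^\sharp_\pi(Y,K)$ is finite, and non-degenerate in the sense of the subsection on non-degeneracy, since near each of its points the fiber product exhibits it locally as a transverse intersection of two smooth $1$--manifolds in the $2$--manifold $R(S^2,\{a,b,c,d\})$ (the same local model used there for $\alpha$). Finally, running the identical argument with Theorem~\ref{pertvar} in place of Theorem~\ref{pertvar2} realizes $R^\nat_\pi(Y,K)$ as the intersection, inside the same pillowcase and with the same $R(Y_0,K_0)$, of $R(Y_0,K_0)$ with the single circle $\rho$; since $\rho_1$ and $\rho_2$ are two curves lying near the diagonal, each restricting like $\rho$ (one is essentially $\rho$, the other its translate by a corner), the cluster of $R^\sharp_\pi(Y,K)$ near any point of $R^\nat_\pi(Y,K)$ contains exactly two points, which is the asserted 2-to-1 correspondence.

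\emph{Main obstacle.} I expect the crux to be the same point flagged and deferred in the subsection on non-degeneracy: reconciling ``the pillowcase intersection is transverse'' with the working definition of non-degeneracy via the $3$--dimensionality of $\ker dF_\rho$. The clean route is to invoke the asserted equivalence between the two pillowcase transversality pictures and Weil-type non-degeneracy; the cheap route, which suffices here, is to note that away from the corners the fiber-product description puts $R^\sharp_\pi(Y,K)$ locally into the shape of a transverse intersection of two $1$--manifolds in a surface and then to run the Mayer--Vietoris and stabilizer computation used for $\alpha$ in that subsection. A minor, bookkeeping-level point is to confirm that $P_1,P_2$ genuinely lie in $B^3\setminus(K^\sharp\cup W)$ so that the holonomy perturbation is supported off the singular locus --- this is clear from Figure~\ref{unreduced} --- while the remaining hypotheses on $R(Y_0,K_0)$ are assumed in the statement.
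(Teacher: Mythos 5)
The paper states this corollary without an explicit proof (it carries a bare \qed), expecting the reader to repeat the reasoning for Corollary \ref{Lagrangian} with Theorem \ref{pertvar2} in place of Theorem \ref{pertvar}, and your proposal reconstructs exactly that implicit argument: the Lemma~\ref{double}/Proposition~\ref{natcase}-style fiber product over the boundary pillowcase, the corner-avoidance computation for $\rho_1,\rho_2$ (correctly adjusted to account for the extra $\tau_i(\beta)$ term, so ``verbatim'' is a slight overstatement but the substance is right), the openness-of-transversality limit argument, and the $2$-to-$1$ count by comparing the generically $4$-to-$1$ limit of $\rho_1\sqcup\rho_2$ against the $2$-to-$1$ limit of $\rho$. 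You also correctly flag, as the paper does in its non-degeneracy subsection, that identifying pillowcase transversality with Weil/cohomological non-degeneracy is the one point deferred to later work.
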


 In particular, for knots in the 3-sphere with simple representation varieties, the 2-sphere corresponding to the representation $\alpha_{\pi/2}$ is perturbed into a pair of isolated points and each $SO(3)$ component is perturbed into four isolated points. This is the unreduced counterpart to the fact that in the reduced case, the isolated representation $\alpha$ perturbs to a non-degenerate isolated point and each circle perturbs to a pair of isolated points.

 \medskip

For convenience, we will restrict our calculations in Section \ref{data}  below to  the reduced instanton chain complex $CI^\nat(Y,K)$.  By using Theorem \ref{pertvar2}, each calculation has its unreduced counterpart, and in each case $CI^\sharp(Y,K)$ has twice as many generators as $CI^\nat(Y,K)$.

\section{Examples: 2-bridge knots}\label{twobridgesection}

 Having worked out the perturbation picture inside the 3-ball, completing the analysis of  Diagram (\ref{SVKD}) is reduced to understanding the restriction map  from $R(Y_0,K_0)=R(Y\setminus B^3, K\setminus (A_1\cup A_2))$ to the pillowcase for various   $(Y,K)$. We explore this in detail for 2-bridge knots and torus knots in this and the following section.  In contrast to the analogous  question for knot complements and the image of their full representation varieties in the pillowcase as the character variety of the torus, the situation is much simpler (in fact linear) for 2-bridge knots and  complicated for torus knots. 
 
 \medskip
Consider the arcs $A_1(n)\cup A_2(n)\subset B^3$ indicated in Figure \ref{ntwist}, where $n$ refers to the number of half-twists (positive or negative according to the sign of $n$). Orient these arcs arbitrarily and let $a,b,c,d$ denote their oriented meridians as indicated.

  \begin{figure}[H]
\begin{center}
\def\svgwidth{2.5in}
 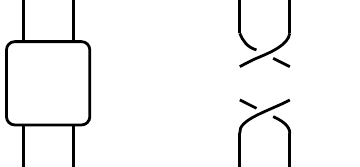
 \caption{\label{ntwist} A box labeled ``$n$" indicates $n$ positive or negative crossings, according to whether $n>0$ or $n<0$.}
\end{center}
\end{figure}

It is straightforward to check that if $\pi_1(B^3\setminus(A_1(n)\cup A_2(n))\to SU(2)$ is a representation given by 
$$b\mapsto e^{x\bbk}\bbi, ~a\mapsto e^{y\bbk}\bbi,~c\mapsto e^{z\bbk}\bbi,~d\mapsto e^{w\bbk}\bbi,$$
then 
\begin{equation}\label{eqn18}
\begin{pmatrix} x\\ y\end{pmatrix}= \begin{pmatrix} 1+n&-n\\ n&1-n\end{pmatrix} \begin{pmatrix} z \\ w\end{pmatrix}.
\end{equation}
Note that this formula holds with any choice of orientations of the arcs and choice of sign of $n$.

Let $a_1,a_2,\cdots ,a_m$ be integers, with $m$ odd, and define the rational number $p/q$ with $p$ and $q$ relatively prime by the continued fraction expansion:
\begin{equation}
\label{CFE}
\frac{p}{q}=a_1+ \frac{1}{a_2+\frac{1}{a_3+\cdots}}.
\end{equation}
Consider the diagram of the 2-bridge knot $K=K(p/q)$ associated to this sequence (see \cite{Burde-Zieschang}), with a 3-ball intersecting in  unknotted arcs as indicated in Figure  \ref{2-bridgeK}. 
We assume   $p$ is odd so that $K(p/q)$ is a knot, not a link. 
\medskip

Our notational convention is that  of Burde-Zieschang \cite{Burde-Zieschang} and is consistent with the convention that the 2-fold branched cover of $K(p/q)$ is $L(p,q)$, where $L(p,q)$ is oriented as the quotient of $S^3$, in other words $L(p,q)$ is $-p/q$ surgery on the unknot.  Some other authors use other conventions.

  \begin{figure}
\begin{center}
\def\svgwidth{2.4in}

 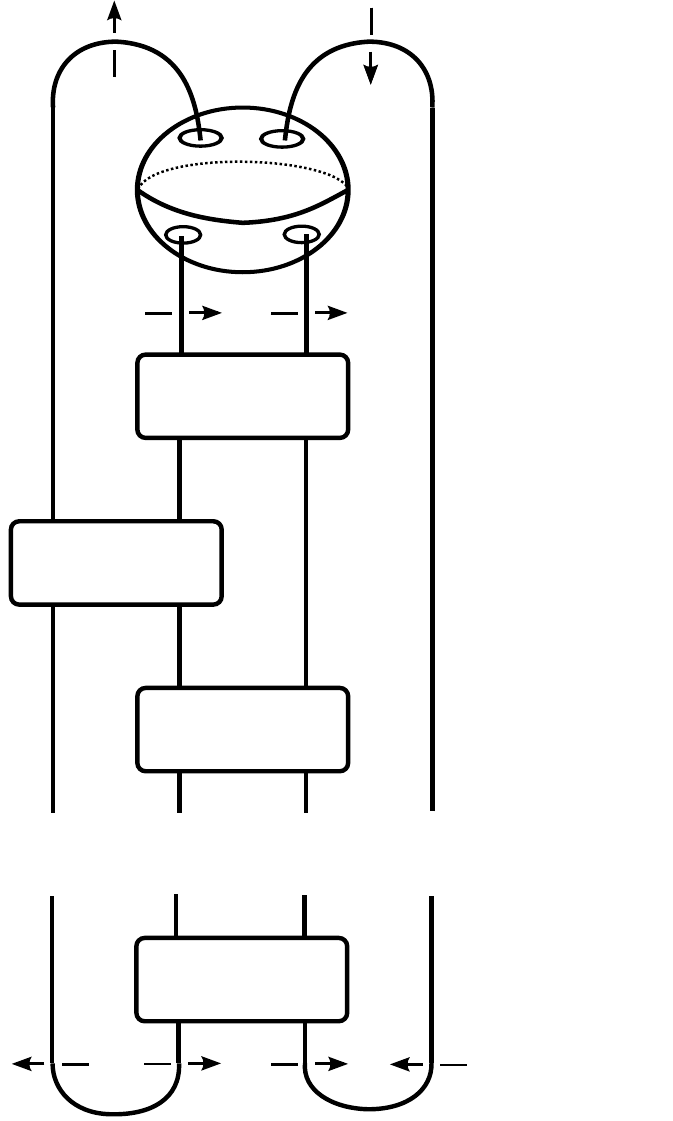
 \caption{\label{2-bridgeK}}
\end{center}
\end{figure}

The complement $S^3\setminus (K\cup B^3)$ is homeomorphic to a 3-ball with two unknotted arcs removed, and hence Proposition \ref{unpert0} implies that  $R(S^3\setminus B^3, K\setminus (A_1\cup A_2))$ is an arc 
 $$\tau_t:\pi_1(S^3\setminus (K\cup B^3))\to SU(2), t\in [0,\pi]$$  
 determined by $$ \tau_t(a')= \bbi,~ \tau_t(b')=\bbi, ~ \tau_t(c')=e^{t\bbk}\bbi,~ \tau_t(d')=e^{t\bbk}\bbi, $$
 where $a',b',c', d'$ are illustrated in Figure \ref{2-bridgeK}.

Then $ \tau_t(a)=\bbi$, and writing 
$$ \tau_t(b)=e^{\gamma\bbk}\bbi, ~ \tau_t(c)=e^{\theta\bbk}\bbi,~ \tau_t(d)=e^{(\theta-\gamma)\bbk}\bbi,$$
 Equation (\ref{eqn18}) implies that   
 \begin{equation}
\label{matrixeqn}
\begin{pmatrix} \gamma(t)\\ \theta(t) \\ \theta(t)-\gamma(t) \end{pmatrix}=
M(-a_1)N(a_{2})M(-a_{3})\cdots N(a_{m-1})M(-a_m)\begin{pmatrix} t\\ t \\ 0 \end{pmatrix}
\mod 2\pi,
\end{equation}
 
 where
$$
M(a)=\begin{pmatrix} 1&0&0\\ 0&1+a&-a\\ 0&a&1-a \end{pmatrix}\text{~and~}
N(a)=\begin{pmatrix} 1+a&-a&0\\ a&1-a&0\\ 0&0&1 \end{pmatrix}.
$$

Thus the path $\tau_t$ will restrict to a linear path in the pillowcase of the form $$(\gamma(t), \theta(t))=(mt, nt), t\in [0,\pi]$$ for some integers $m,n$.  More precisely, this linear path in $\RR^2$  projects to a path in the pillowcase via the branched cover $\RR^2\to R(S^2,\{a,b,c,d\})$.  We next show that $(m,n)=(q, q-p)$.

\begin{lem}\label{pandq}   Let  $p,q$  be relatively prime non-zero integers   and $a_1,a_2,\cdots ,a_m$ a sequence giving the continued fraction expansion for $p/q$.
Then the solution to Equation (\ref{matrixeqn}) is 
 $$( \gamma(t),\theta(t))= \pm (qt,  (q-p)t).$$
\end{lem}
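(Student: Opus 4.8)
The plan is to prove the identity $M(-a_1)N(a_2)M(-a_3)\cdots M(-a_m)\,(t,t,0)^{\mathsf T} = \pm(qt,(q-p)t,(q-p-q)t)^{\mathsf T}$ by reducing it to a purely linear-algebraic statement about the $2\times 2$ blocks and then matching it with the standard continued-fraction recursion. First I would observe that the third coordinate is always the second minus the first (this is preserved by both $M(a)$ and $N(a)$, as one checks directly from the matrix forms, and it holds for the starting vector $(t,t,0)$), so it suffices to track the top $2\times 2$ behavior. Writing a vector as $(x,y)$ with the understanding that the discarded third slot is $y-x$, the matrices $M(-a)$ and $N(a)$ act on pairs $(x,y)$; the key point is that $M(-a)$ fixes $x$ and sends $y \mapsto (1-a)y + a(y-x)$... — more usefully, I would recast everything in terms of the two "outer" strands. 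Concretely, set $u = x$ and $v = y - x$ (the first strand angle and the difference), so that the triple is $(u, u+v, v)$; then one computes that $N(a)$ acts by $(u,v)\mapsto (u + a\cdot(\text{something}), \dots)$ and $M(-a)$ similarly, and in these coordinates each matrix becomes the elementary unipotent $\begin{pmatrix}1 & \mp a\\ 0 & 1\end{pmatrix}$ or $\begin{pmatrix}1 & 0\\ \pm a & 1\end{pmatrix}$.

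Once the product is reduced to an alternating product of elementary matrices $\begin{pmatrix}1 & \ast\\0&1\end{pmatrix}$ and $\begin{pmatrix}1&0\\\ast&1\end{pmatrix}$ with entries $\pm a_i$, I would invoke the classical fact that such an alternating product, applied to a suitable initial vector, computes the numerator and denominator of the continued fraction $[a_1,a_2,\dots,a_m]$. Precisely, if one defines $p_k/q_k$ by the truncated continued fractions, then $\binom{p_k}{q_k}$ satisfies the recursion $\binom{p_k}{q_k} = \begin{pmatrix}a_k & 1\\ 1 & 0\end{pmatrix}\binom{p_{k-1}}{q_{k-1}}$, and the matrices $M(-a_i)$, $N(a_i)$ in the $(u,v)$ coordinates are conjugate/equivalent to these transfer matrices. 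I would make this comparison explicit by an induction on $m$: assuming the formula holds for the sequence $a_2,\dots,a_m$ (which has even length, so one must be slightly careful and instead induct by peeling off two matrices $M(-a_1)N(a_2)$ at a time, keeping $m$ odd), and then checking that multiplying by $M(-a_1)N(a_2)$ transforms the continued-fraction data $p'/q'$ of $[a_3,\dots,a_m]$ into that of $[a_1,a_2,a_3,\dots,a_m] = p/q$ via the standard two-step recursion $p = (a_1 a_2 + 1)p'' - a_1(\dots)$, etc. The base case $m=1$ is the single matrix $M(-a_1)$ applied to $(t,t,0)$, giving $(t, (1-a_1)t + a_1\cdot 0,\dots)$ — wait, more carefully $(t, (1+(-a_1))t + (-(-a_1))\cdot 0, \dots)$, and since $p/q = a_1/1$ one verifies $(\gamma,\theta) = (t, (1-a_1)t) = (qt, (q-p)t)$ directly.

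The main obstacle I anticipate is bookkeeping: getting the signs, the orientation conventions, and the precise change of coordinates $(x,y,z)\leftrightarrow(u,v)$ exactly right so that the $M$'s and $N$'s really do become the standard $GL_2(\mathbb Z)$ generators that build up continued-fraction convergents, and making sure the alternation $M, N, M, N, \dots, M$ (with the minus signs on the odd-indexed $a_i$) matches the sign convention in $p/q = a_1 + 1/(a_2 + \cdots)$ rather than its negative — this is exactly the source of the $\pm$ in the statement. A clean way to handle this is to avoid tracking everything and instead just verify the two generating relations: (i) the claimed answer is correct for $m=1$, and (ii) if $(\gamma,\theta) = \pm(q't, (q'-p')t)$ is the output for the sequence $(a_3,a_4,\dots,a_m)$ with convergent $p'/q'$, then applying $N(a_2)$ then $M(-a_1)$ produces $\pm(qt,(q-p)t)$ where $p/q = a_1 + 1/(a_2 + q'/p')$ — this last identity being the standard continued-fraction composition law, which I would verify by a short direct computation with the explicit $3\times3$ matrices $M(-a_1)$ and $N(a_2)$ acting on $(q't,(q'-p')t,(q'-p'-q'\cdot 0)\dots)$... i.e. on $\pm(q't, (q'-p')t, -p't)$.
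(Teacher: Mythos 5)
Your plan follows essentially the same route as the paper's proof: an induction peeling off $M(-a_1)N(a_2)$ from the front, with base case the single matrix $M(-a_m)$ applied to $(t,t,0)^{\mathsf T}$, and a direct check that the resulting first and (negated) third entries satisfy the continued-fraction composition law $p/q = a_1 + 1/(a_2 + q'/p')$. The detour through $(u,v)=(x,y-x)$ coordinates and the appeal to the transfer matrices $\begin{pmatrix}a_k&1\\1&0\end{pmatrix}$ are unnecessary embellishments (and the latter is the weak point, since those matrices encode a different convergent recursion than the alternating unipotent product you actually obtain), but the direct two-step computation you fall back on in your final paragraph is precisely the paper's argument.
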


\begin{proof} Note that 
$$M(-a_m)\begin{pmatrix} 1\\ 1 \\ 0 \end{pmatrix}
=\begin{pmatrix} 1\\ 1-a_m \\ -a_m \end{pmatrix}$$

  Suppose by induction that 
 $$M(-a_3)N(a_4)\cdots N(a_{m-1})M(-a_m)\begin{pmatrix} 1\\ 1 \\ 0 \end{pmatrix}
=\begin{pmatrix} s\\ s-r \\ -r \end{pmatrix}$$
for some relatively prime pair of integers $r,s$ so that 
$\frac{s}{r} $
has continued fraction expansion given by $ a_3,a_4,\cdots, a_m $.
Then 
$$M(-a_1)N(a_2)\cdots M(-a_m)\begin{pmatrix} 1\\ 1 \\ 0 \end{pmatrix}
=M(-a_1)N(a_2)\begin{pmatrix} s\\ s-r \\ -r \end{pmatrix}=
 \left( \begin {array}{c} s+{ a_2}\,r\\ \noalign{\medskip}s-r+{ 
a_2}\,r-{ a_1}\,s-{ a_1}\,{ a_2}\,r\\ \noalign{\medskip}-{ a_1}
\,s-{ a_1}\,{ a_2}\,r-r\end {array} \right). $$
Setting $p_0=a_1s+a_1a_2r+r$ and $q_0=s+a_2r$ it is easy to check that $p_0$ and $q_0$ are relatively prime and 
$$\frac{p_0}{q_0}=a_1+ \frac{1}{a_2+\frac{s}{r}}.$$
In particular, $\frac{p_0}{q_0}$ has continued fraction expansion given by $ a_1,\cdots, a_m $ so that $(p_0,q_0)=\pm (p,q)$, and 
$$M(-a_1)N(a_2)\cdots M(-a_m)\begin{pmatrix} t\\ t \\ 0 \end{pmatrix}
= \pm t \begin{pmatrix} q\\ q-p \\ -p \end{pmatrix}.$$ 
\end{proof}

The two paths
$$t\mapsto (qt, (q-p)t), ~t\mapsto (-qt, -(q-p)t),~ t\in [0,\pi]$$
are identical as maps to the pillowcase $R(S^2,\{a,b,c,d\})=\RR^2/\sim$. Hence the sign ambiguity in Lemma \ref{pandq} does not affect the image in the pillowcase.

\medskip
 The intersection of the curve $(\gamma(t),\theta(t))=(qt, (q-p)t),~t\in [0,\pi]$ with the curve $\gamma=\theta$ in $R(S^2,\{a,b,c,d\})$ occurs at the $\frac{p+1}{2}$  points
 \begin{equation}
\label{points}x_\ell=(q \tfrac{2\pi \ell}{p}, (q-p)\tfrac{2\pi \ell}{p}),~ \ell =0,1,\cdots, \tfrac{p-1}{2}.
\end{equation}
 Note that the intersection point $x_0$ corresponds to the distinguished  representation $\alpha$ of Equation (\ref{alphadef}).

Combining  this observation with   Proposition \ref{unpert} and  Lemma \ref{double}  one immediately concludes

\begin{thm}\label{2bridgethm} For  the 2-bridge knot  $K=K(p/q)$, the space $R^\nat(S^3, K)$ is a union of circles and one isolated point $\alpha$, 
one circle for each  intersection point $x_\ell, \ell=1,2,\cdots,\frac{p-1}{2} 
$ with $\alpha$ corresponding to $x_0$.
 
 For perturbation data $\pi$ as above, the space $R^\nat_\pi(S^3, K)$ is a union of pairs of isolated non-degenerate points $x_{\ell,1},x_{\ell,2}$, $\ell=1,2,\cdots,\frac{p-1}{2}$, and one additional non-degenerate  point $\alpha'$,
corresponding to the  intersections in $R(S^2,\{a,b,c,d\})=[0,\pi]\times [0,2\pi]/{\sim}$ of   the curve 
$(qt, (q-p)t), ~t\in [0,\pi] $ with the  circle $\{\rho^\epsilon(\beta)\}$.
\end{thm}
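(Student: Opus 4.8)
The plan is to deduce the theorem from the decomposition $(S^3,K)=(Y_0,K_0)\cup_{S^2}(B^3,A_1\cup A_2)$ associated to the 3-ball of Figure~\ref{2-bridgeK}, which we arrange to contain $H\cup W$, by feeding the two ingredients already in hand into the gluing statements Proposition~\ref{natcase} and Corollary~\ref{Lagrangian}. The first ingredient is the pillowcase image of $R(Y_0,K_0)$: by Lemma~\ref{pandq} the restriction $R(Y_0,K_0)\to R(S^2,\{a,b,c,d\})$ is an embedding onto the arc $C=\{(qt,(q-p)t):t\in[0,\pi]\}$. The second ingredient is the traceless moduli space of the trivial tangle in the ball, described by Proposition~\ref{unpert} in the unperturbed case and by Theorem~\ref{pertvar} together with Corollary~\ref{corimage} for $\pi=(P,f,\epsilon)$.

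For the unperturbed statement I would argue as follows. Proposition~\ref{natcase} presents $R^\nat(S^3,K)$ as the fiber product $R(Y_0,K_0)\times_{R(S^2,\{a,b,c,d\})}R^\nat(B^3,A_1\cup A_2)$, with point fibers except over non-abelian representations of $R(Y_0,K_0)$ restricting to a corner of the pillowcase. Since the restriction of $R(Y_0,K_0)$ is injective, the fiber product is governed by the intersection of $C$ with the image of $R^\nat(B^3,A_1\cup A_2)$, which by Proposition~\ref{unpert} is the diagonal arc $\theta=\gamma$ carrying circle fibers over its interior and point fibers over its two corner endpoints. A one-line congruence computation (the relation $(qt,(q-p)t)\equiv\pm(\gamma,\gamma)$ in the pillowcase forces $pt\in2\pi\ZZ$) shows that $C$ meets the diagonal exactly at the points $x_\ell$ of~\eqref{points}, $\ell=0,\dots,\tfrac{p-1}{2}$, and that $x_\ell$ is an interior (non-corner) point for $\ell\ge1$ because $p$ is odd and $\gcd(p,q)=1$. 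Over each $x_\ell$ with $\ell\ge1$ the fiber product is a circle and the map from $R^\nat(S^3,K)$ is a homeomorphism onto it; over $x_0=(0,0)$, which is the image of the abelian representation sending every meridian to $\bbi$, the fiber product is a single point whose preimage is again a single point, the isolated $\alpha$ of~\eqref{alphadef}. This yields the first assertion.

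For the perturbed statement, Corollary~\ref{Lagrangian} applies directly: $R(Y_0,K_0)$ is a smooth arc, its pillowcase image $C$ is a line of slope $\tfrac{q-p}{q}\ne1$ and hence transverse to the diagonal, and there are no non-manifold points, so for $\epsilon>0$ small $R^\nat_\pi(S^3,K)$ is finite and non-degenerate, with points in bijection (via the same gluing principle, Lemma~\ref{double}, now applied to the perturbed trivial-tangle moduli space) with the transverse intersection points of $C$ and the circle $\rho^\epsilon$ of Theorem~\ref{pertvar} in the pillowcase. It then remains to count and locate these points. I would: (i) observe by a compactness argument that, for $\epsilon$ small, every point of $C\cap\rho^\epsilon$ lies near one of the $x_\ell$, using that $\rho^\epsilon$ converges uniformly (Corollary~\ref{corimage}) to a generically 2-1 cover of the diagonal and that $C$ is compact; (ii) note that for $\ell\ge1$ the two sheets of $\rho^\epsilon$ through a neighbourhood of $x_\ell$ are $C^1$-close to the diagonal and disjoint there --- they could coincide only at the unique double point $(\tfrac\pi2,\tfrac\pi2)$ of $\rho^\epsilon$, which $x_\ell$ is not since $p$ is odd --- so $C$, being transverse to the diagonal, crosses each sheet exactly once, producing the pair $x_{\ell,1},x_{\ell,2}$; and (iii) handle the corner $x_0$ by invoking the non-degeneracy of $\alpha$ established earlier, whence $\alpha$ persists for $\epsilon$ small as a single non-degenerate critical point $\alpha'$ of the perturbed functional, which is the only contribution near $x_0$. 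Adding up gives $2\cdot\tfrac{p-1}{2}+1=p$ generators, with the indicated description as intersections of $C$ with $\{\rho^\epsilon(\beta)\}$.

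The step I expect to be the main obstacle is (iii): showing that the corner $x_0$ contributes exactly one perturbed point, not zero and not two. Because $\rho^\epsilon$ avoids the corners of the pillowcase, near $x_0$ it is a single ``fold'' sitting at distance $O(\epsilon)$ from $x_0$, and a direct local count of its intersections with the endpoint of the arc $C$ is fiddly; the clean route is to bypass the local geometry entirely and use the already-proved rigidity of the non-degenerate point $\alpha$ under small perturbations. A secondary, purely bookkeeping, difficulty is ensuring that the count near the interior $x_\ell$ is not disturbed by the self-intersection of $\rho^\epsilon$ or by $C$ re-approaching the diagonal; both are controlled by the explicit formulas of Theorem~\ref{pertvar} together with the coprimality of $p$ and $q$.
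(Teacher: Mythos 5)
Your proof is correct and follows essentially the same route as the paper's (very terse) proof, which simply cites Proposition~\ref{unpert} together with Lemma~\ref{double} for the unperturbed case and Corollary~\ref{corimage} together with Lemma~\ref{double} for the perturbed case; you have spelled out exactly the details those citations are meant to invoke, including the injectivity of the restriction from $R(Y_0,K_0)$, the intersection count from the congruence $pt\in 2\pi\ZZ$, and the persistence of the non-degenerate point $\alpha$.
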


\begin{proof}
 The first assertion follows from Proposition \ref{unpert} and  Lemma \ref{double}. The second then follows  similarly  from Corollary \ref{corimage} and  Lemma \ref{double}.
\end{proof}

Note that if we consider the space $\tilde R (S^3, K)$ of traceless $SU(2)$ representations of $K$ ({\em not} modulo conjugation) then the same reasoning gives that for $K=K(p/q)$, $\tilde R (S^3, K)$ is homeomorphic to 
the union of a 2-sphere (the conjugacy class of $\alpha$) and $\frac{p-1}{2}$ copies of $SO(3) =SU(2)/\pm 1$ (the conjugacy classes of each $x_\ell$), giving a different argument for the result of Lewallen \cite[Theorem 2.4]{Lewallen}.  Since the spaces $\tilde R (S^3, K)$ and $ R^\sharp(S^3, K)$ are homeomorphic, Theorem \ref{pertvar2} and Corollary \ref{corpert2} implies that after perturbing along the two curves $P_1,P_2$, $R^\sharp_\pi(S^3,K)$ is the union of  $4\ell$ points $x_{\ell,1},x_{\ell,2},x_{\ell,3},x_{\ell,4}$, $\ell=1,2,\cdots,\frac{p-1}{2}$ and two additional points $\alpha'_1,\alpha'_2$.

\bigskip

We illustrate Theorem \ref{2bridgethm} in a few examples. Consider first  the $(2,n)$ torus knot  $T_{2,n}$, corresponding to $p/q=-n/1$.
Thus the restriction of  $(\gamma(t),\theta(t))$ to the pillowcase is parameterized by $$t\in[0,\pi]\mapsto (t,(n+1)t)\in [0,\pi]\times[0,2\pi].$$

Figure \ref{fig12} illustrates the case of the  right handed trefoil knot, $T_{2,3}=K(-3/1)$, with corresponding curve $t\mapsto (t,4t)$.  The unperturbed moduli space 
$R^\nat(S^3, T_{2,3})$ consists of a isolated point $\alpha$ and a circle $c$. The perturbed moduli space 
$R^\nat_\pi(S^3, T_{2,3})$  with $\pi=(P,\sin(x),\epsilon)$, consists of three isolated representations, $\alpha', x_1,x_2$. As $\epsilon\to 0$, $\alpha'\to \alpha$ and $x_1,x_2$ converge to a pair of antipodal points on the circle $c$. These three points
  are the generators of the instanton knot homology chain complex of the trefoil.

  \begin{figure}
\begin{center}
\def\svgwidth{2in}

 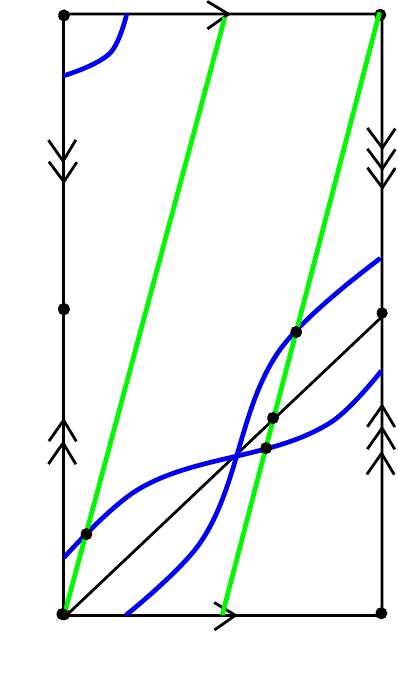

 \caption{$R^\nat$ and $R^\natural_\pi$ for the Trefoil \label{fig12}}
\end{center}
\end{figure}

As a further example Figure \ref{fig14} illustrates the situation for $K$ the Figure 8 knot,  which can be expressed as $K(-5/3)$.  This gives the curve $t\mapsto (3t,8t)$.  We conclude that the unperturbed space $R^\nat(S^3, K)$ consists of a pair of circles $c_x, c_y$ and the isolated point $\alpha$. The perturbed moduli space (generating the instanton chain complex) $R^\nat_\pi(S^3, K)$  contains 5 points, $\alpha',  x_1, x_2, y_1,y_2$.  

 \begin{figure}
\begin{center}

\def\svgwidth{2in}

 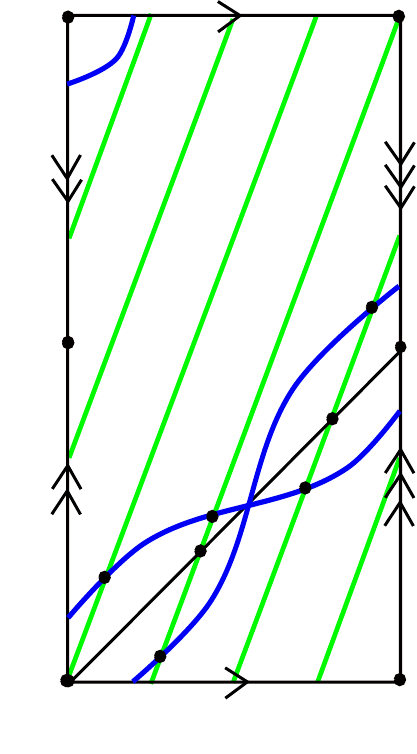

 \caption{$R^\nat$ and $R^\natural_\pi$ for the Figure 8 knot \label{fig14}}
\end{center}
\end{figure}

It is worth recalling that the 2-fold branched cover of $K(p/q)$ is $L(p,q)$, and hence the fraction $p/q$ does not uniquely determine $K=K(p/q)$.  For example the Figure 8 knot can also be described as $K(5/2)$, and hence its character variety $R(S^3,K)$ can also be described as the intersection of $(\gamma,\theta)= (2t, -3t)$ with $\{\gamma=\theta\}$.  This corresponds to the fact that one can choose different (up to isotopy) 3-balls intersecting $K$ in two unknotted arcs, for the same reason that 3-manifolds admit different Heegaard splittings.

\medskip 

As a last example,  Figure \ref{fig72} illustrates the case when $K=7_2=K(-11/5)$. The restriction $R(S^3\setminus B^3, K\setminus (A_1\cup A_2))\to R(S^2,\{a,b,c,d\})$ takes the arc to $(\gamma(t),\theta(t))=(5t,16t)$.
 The perturbed moduli space  $R^\nat_\pi(S^3, K)$  contains 11 points, 
 $$\alpha',  x_1, x_2, y_1,y_2, z_1,z_2,w_1,w_2,v_1,v_2.$$

  \begin{figure}
\begin{center}

\def\svgwidth{2in}

 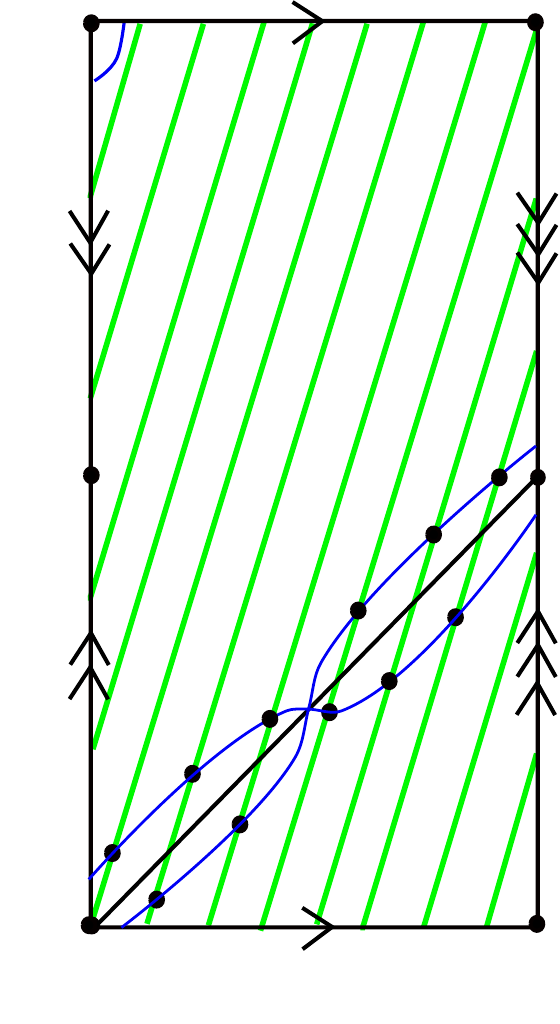

 \caption{$R^\natural_\pi$ for $7_2$ \label{fig72}}
\end{center}
\end{figure}

\section{Examples: Torus knots}\label{torusknotsect}

In contrast to the situation for the full character varieties $\chi(S^3,T_{p,q})$, the varieties of traceless representations $R(S^3,T_{p,q})$ for torus knots are more complicated than those for 2-bridge knots.  We explore the situation in enough detail to establish  that a 3-ball intersecting a torus knot in 2 unknotted arcs can be found so that the restriction to the pillowcase of any non-abelian representation in $R(S^3,T_{p,q})$ avoids the corners.  We then give a method  to describe a 2-variable polynomial which cuts out the traceless character variety of the complement of a 3-ball meeting a torus knot in two arcs.

Note that counting the points of $R(S^3,T_{p,q})$  can be done by looking at the intersections of $\chi(S^3,T_{p,q})$ with the circle $S(\bbi) $ of Equation (\ref{tlcircle}), or equivalently by computing the signature of a torus knot.   We establish that perturbing in a ball using Theorem \ref{pertvar} gives rise to a set of $\sigma(T_{p,q})+1$ generators for the instanton complex $CI^\nat(S^3,T_{p,q})$.

\bigskip
  
  Figure \ref{torusknot} illustrates a $(p,q)$ torus knot $T_{p,q}$ in $S^3$. We view $S^3$ as $\frac{q}{r}$ and $-\frac{s}{p}$ Dehn surgery on the two components of a Hopf link, where $pr+qs=1$.  The knot  $T_{p,q}$ is isotopic to a curve parallel to the first component, a fact which can be verified by identifying the parallel curve with a regular fiber in a Seifert fibering of $S^3$ with singular fibers of order $p$ and $q$. In the figure, $T_{p,q}$ has been isotoped so that it meets a 3-ball in a pair of trivial arcs $A_1\cup A_2$.

  \begin{figure}
\begin{center}

\def\svgwidth{3.5in}

 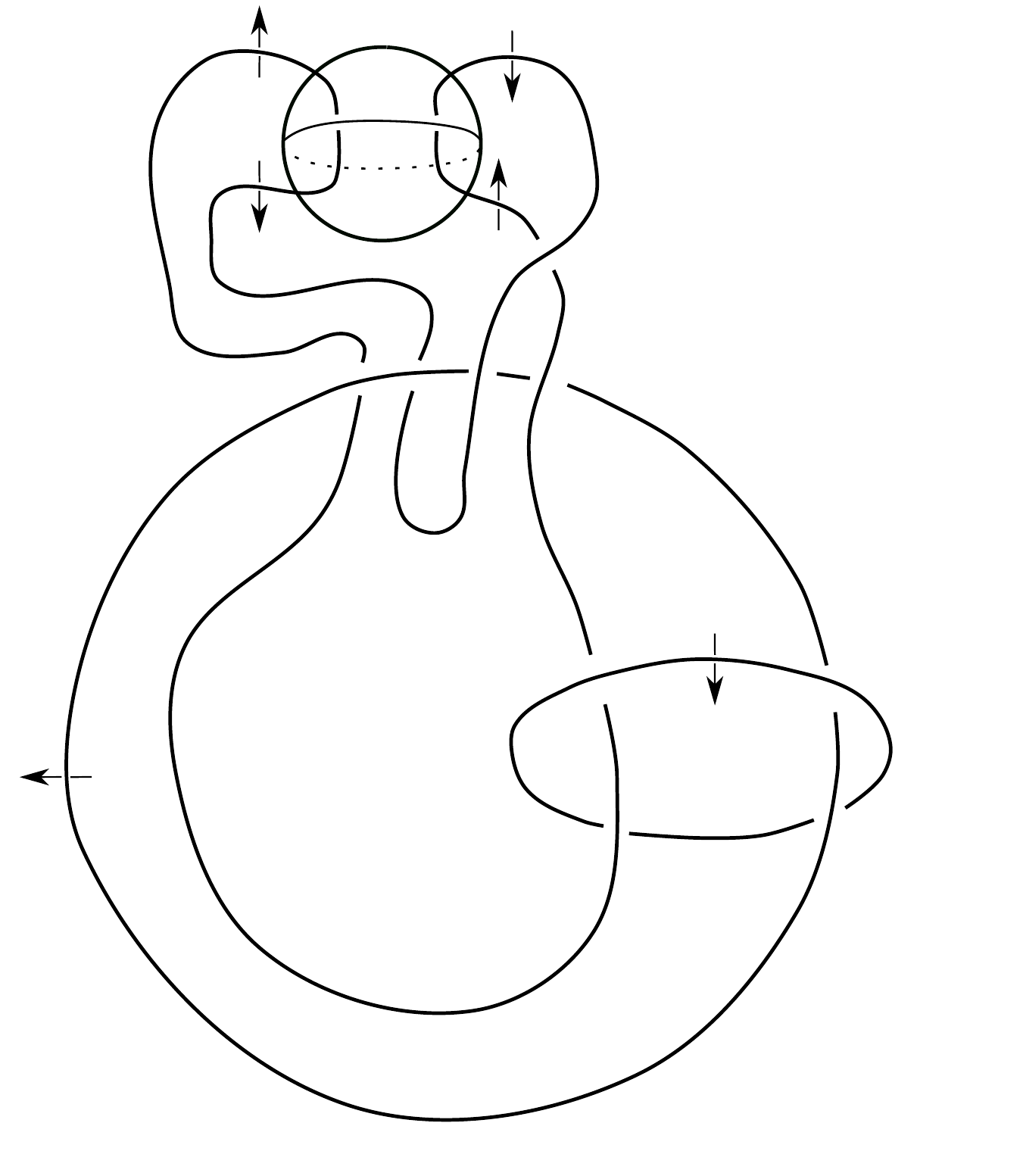
 \caption{The $(p,q)$ torus knot \label{torusknot}}
\end{center}
\end{figure} 

We wish to identify  the space $$R(Y_0,K_0):=R(S^3\setminus B^3, T_{p,q}\setminus(A_1\cup A_2))$$ and its image in the pillowcase.   Note that $$(S^3\setminus B^3)\setminus (T_{p,q}\setminus(A_1\cup A_2))=S^3\setminus (B^3\cup T_{p,q}).$$
A straightforward calculation using the Wirtinger presentation shows that
$\pi_1(S^3\setminus (B^3\cup T_{p,q}))$  has the presentation
$$\langle x,y,a,b,c,d~|~ c=\bar{x}ax, ad\bar{a}=yxb\bar{x}\bar{y}, 
1=[y, xb]= [x, d\bar{a} y]=y^s(xb)^{-p}=x^q(d\bar a y)^r \rangle
  $$
 with $x,y,a,b,c,d$ the generators illustrated in Figure \ref{torusknot}.
 
{\em A priori}, it is clear that $\pi_1((S^3\setminus (B^3\cup T_{p,q}))$ is a free group on two generators: up to homotopy $S^3\setminus (B^3\cup T_{p,q})$ is obtained by gluing two solid tori along a disk, just as a torus knot complement is obtained by gluing two solid tori along an annulus.   Word manipulation in the presentation above, however, provides an explicit identification with a free group on generators
 $$A=(xb)^qy^r \ \ \text{and}\  \ B=(d\bar a y)^{-s} x^p.$$
 Manipulating words shows that 
 $$A^s=xb, ~A^p=y, ~B^r=x, ~B^{-q}=d\bar a y.$$
 Solving for $a,b,c,d$ (using the relation $d\bar a=\bar c d$) yields
 \begin{equation}
\label{words}
a=A^{s+p}B^{q-r}, ~b=B^{-r}A^s, ~ c=B^{-r}A^{s+p}B^q=B^{-r}aB^r,~
d=B^{-q}A^sB^{q-r}=B^{-(q-r)}bB^{q-r}.
\end{equation}

 \medskip
 
 Since $\pi_1((S^3\setminus (B^3\cup T_{p,q}))$ is free on $A$ and $B$,
 the assignment $A\mapsto M, B\mapsto N$ gives a representation for any pair $M,N\in SU(2)$.   A general such assignment will not  yield a traceless representation. 
 However, writing the pair $(M,N)$ as $$M=e^{uQ}, N=e^{vR}, \text{~where~}
 Q,R\in C(\bbi), ~u,v\in [0,\pi],$$
then the corresponding representation  determines a point in $R(Y_0,K_0)$  if and only if the images of $a$ and $b$
\begin{equation}\label{teq0}
M^{s+p}N^{q-r}\text{~and ~} N^{-r}M^s
\end{equation}
are traceless, or, equivalently, if and only if
\begin{equation}
\label{teq}
0=\Real(e^{(s+p)uQ} e^{(q-r)vR})\text{~and ~} 0=\Real(e^{-rvR}e^{suQ} ).
\end{equation}

 Define 
$$V_{p,q,r,s}=\{ (M,N)\in SU(2)\times SU(2)~|~ M^{s+p}N^{q-r}=\bbi\text{~and~}
N^{-r}M^s=e^{\gamma\bbk}\bbi, \gamma\in [0,\pi]\}.$$ 
An analysis using Equation (\ref{teq0}) leads to the following result.

\begin{thm}\label{torusknotreps} 
The assignment 
 $A\mapsto M, B\mapsto N$
 induces a homeomorphism $$V_{p,q,r,s}\cong R(S^3\setminus B^3, T_{p,q}\setminus (A_1\cup A_2))=R(Y_0,K_0).$$
 
For $(M,N)\in V_{p,q,r,s}$, define $\gamma\in [0,\pi]$ and $\theta\in [0,2\pi)$ by 
$$N^{-r}M^s=e^{\gamma\bbk}\bbi\text{~and~}N^{-r}M^{s+p} N^q=e^{\theta\bbk}\bbi.$$
Then $ e^{( \gamma-\theta)\bbk}=M^pN^q$ 
and the space $R(S^3,T_{p,q})$ is homeomorphic to the subset   of $V_{p,q,r,s}$ consisting of those pairs $(M,N)$ satisfying $M^pN^q=1$. 

 The restriction to the pillowcase $R(S^3,T_{p,q})\to R(S^2,\{a,b,c,d\})$  takes every non-abelian representation to a non-abelian representation.  
 
Finally, for suitable small perturbations, $R^\nat_\pi(S^3, T_{p,q})$ contains  $|\sigma(T_{p,q})|+1$ non-degenerate points. 
\end{thm}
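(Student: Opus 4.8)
The plan is to reduce the last statement to a transverse intersection count in the pillowcase, combining the local analysis in the $3$--ball (Theorem~\ref{pertvar}, Corollary~\ref{corimage}), the gluing formula of Proposition~\ref{natcase}, and the structure of $R(Y_0,K_0):=R(S^3\setminus B^3,\,T_{p,q}\setminus(A_1\cup A_2))$ coming from the earlier parts of this theorem and from Theorem~\ref{toruspt2}; here $B^3$ is the ball of Figure~\ref{torusknot}. Write $\cC\subset R(S^2,\{a,b,c,d\})$ for the image of $R(Y_0,K_0)$ in the pillowcase and $D$ for the diagonal arc $\{\gamma=\theta\}$. The first step is to check that $R(Y_0,K_0)$ satisfies the hypotheses of Corollary~\ref{Lagrangian}: by Theorem~\ref{toruspt2} it is a smooth $1$--manifold away from finitely many points, its restriction to the pillowcase is an immersion transverse to $D$ at the manifold points with the non-manifold points carried off a neighborhood of $D$, and, by the preceding assertion of this theorem, every non-abelian representation restricts on $S^2$ to a non-abelian (hence, by Proposition~\ref{s2prop}, non-corner) representation. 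No auxiliary perturbation inside $Y_0$ is needed for torus knots.

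Next I would identify $R^\nat_\pi(S^3,T_{p,q})$ with the pillowcase intersection $\cC\cap\rho^\epsilon$, where $\rho^\epsilon$ denotes the immersed circle of Theorem~\ref{pertvar} and Corollary~\ref{corimage}. Rerunning the proof of Proposition~\ref{natcase} with $R^\nat_\pi(B^3,A_1\cup A_2)$ in place of $R^\nat(B^3,A_1\cup A_2)$ is legitimate, since $H\cup W\cup P$ all lie inside $B^3$, so the stabilizer on the ball side is $\{\pm1\}$; this identifies $R^\nat_\pi(S^3,T_{p,q})$ with the fiber product $R(Y_0,K_0)\times_{R(S^2,\{a,b,c,d\})}R^\nat_\pi(B^3,A_1\cup A_2)$. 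Since for small $\epsilon>0$ the circle $\rho^\epsilon$ misses the four corners of the pillowcase (proof of Corollary~\ref{corimage}), no representation in the fiber product restricts to a corner, so every fiber is a single point; thus $R^\nat_\pi(S^3,T_{p,q})$ is in bijection with $\cC\cap\rho^\epsilon$. Corollary~\ref{Lagrangian} now gives that for $\epsilon>0$ small this intersection is transverse, so $R^\nat_\pi(S^3,T_{p,q})$ is finite and non-degenerate; it remains only to count it.

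For the count, recall from Corollary~\ref{corimage} that $\rho^\epsilon$ lies in an arbitrarily small neighborhood of $D$ as $\epsilon\to0$ and limits to a generically $2$--$1$ cover of $D$ with folds exactly over the two corners $(0,0)$ and $(\pi,\pi)$ of $D$; away from small neighborhoods of these folds it is a $C^1$--small perturbation of two disjoint copies of $D$. On the other hand, $\cC\cap D$ is the image in the pillowcase of $R(S^3,T_{p,q})$ (Proposition~\ref{unpert0}), which by the signature count for traceless representations of torus knots (Section~\ref{toruschi} and \cite{Lin,Herald2,Heusener}) consists of the corner $(0,0)$, coming from the abelian representation $\alpha_{\pi/2}$, together with $N:=|\sigma(T_{p,q})|/2$ points in the interior of $D$ at which $\cC$ meets $D$ transversely. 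In particular $\cC$ misses the corner $(\pi,\pi)$, so for small $\epsilon$ no intersection points of $\cC\cap\rho^\epsilon$ occur near that fold. Near each of the $N$ interior points the two nearby strands of $\rho^\epsilon$ each meet $\cC$ transversely in a single point, giving $2N=|\sigma(T_{p,q})|$ points. Finally, near $(0,0)$ the point $\alpha\in R^\nat(S^3,T_{p,q})$ of Equation~(\ref{alphadef}) is isolated and non-degenerate and persists under small perturbations as a single non-degenerate point $\alpha'$, which accounts for exactly one point of $\cC\cap\rho^\epsilon$ there. Adding these contributions gives $|\sigma(T_{p,q})|+1$ non-degenerate points.

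The main obstacle is the local behavior near the corner $(0,0)$: a priori the fold of $\rho^\epsilon$ over $(0,0)$ could meet $\cC$ in zero or two nearby points rather than one, and ruling this out is precisely where one must invoke the non-degeneracy and persistence of $\alpha$ (equivalently, produce a careful local model of $\cC$, $D$, and $\rho^\epsilon$ near $(0,0)$) rather than a purely soft count. A secondary point is that the regularity and transversality hypotheses of Corollary~\ref{Lagrangian} for $R(Y_0,K_0)$ must genuinely be extracted from Theorems~\ref{torusknotreps} and~\ref{toruspt2}, in particular from the explicit two–variable polynomial cutting out $R(Y_0,K_0)$.
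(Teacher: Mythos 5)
Your proposal addresses only the final assertion of the theorem (the count of $|\sigma(T_{p,q})|+1$ non-degenerate points), taking the earlier parts as given; for that part, the route you take — identifying $R^\nat_\pi(S^3,T_{p,q})$ with the pillowcase intersection $\cC\cap\rho^\epsilon$ via Proposition \ref{natcase}, counting two points near each interior transverse intersection of $\cC$ with the diagonal $D$, plus one from the persistence of the isolated non-degenerate point $\alpha$ at the corner — is essentially the one the paper follows when it invokes Propositions \ref{circle or point}, \ref{natcase}, and Theorem \ref{pertvar}.

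There is, however, a genuine gap in how you establish the hypotheses of Corollary \ref{Lagrangian}. You attribute to Theorem \ref{toruspt2} the statements that $R(Y_0,K_0)$ is a smooth $1$--manifold away from finitely many points and that its restriction to the pillowcase is an immersion transverse to $D$. Theorem \ref{toruspt2} does neither of these things: it only exhibits $V_{p,q,r,s}$ as a semi-algebraic set lying over the zero set of the polynomial $p_{p,q,r,s}$ (and that zero set can be genuinely singular, as the paper's own analysis of the $(3,4)$ torus knot shows). It also comes logically later — Theorem \ref{toruspt2} is stated and proved after Theorem \ref{torusknotreps} and its proof invokes the first part of the latter — so reaching forward to it here is circularity-prone. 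The paper does not use Theorem \ref{toruspt2} at this point at all; the non-degeneracy of the $|\sigma(T_{p,q})|/2$ non-abelian traceless representations (equivalently, the transversality of the arcs of $\chi(S^3,T_{p,q})$ with the circle $S(\bbi)$ in $\chi(\partial N(K))$) is imported from Section \ref{toruschi} on simple representation varieties, Definition \ref{SRV}(4), and \cite{Herald2, Heusener}, with the translation into transversality in the four-punctured-sphere pillowcase left as a compatibility the authors explicitly defer to later work. Your argument would need to draw the required transversality from that source rather than from Theorem \ref{toruspt2}, and you should say explicitly that you are proving only the last assertion, since the earlier parts of the theorem (the identification $V_{p,q,r,s}\cong R(Y_0,K_0)$, the formula $e^{(\gamma-\theta)\bbk}=M^pN^q$, and the corner-avoidance of non-abelians) require separate algebraic arguments that your proposal does not supply.
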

\begin{proof}
Let ${\bf I}$ denote the interval  of pure unit quaternions of the form $e^{\gamma\bbk}\bbi$ with $\gamma\in [0,\pi]$.  Define $F:SU(2)\times SU(2)\to SU(2)\times SU(2)$ by the formula  $F(M,N)= (M^{s+p}N^{q-r}, N^{-r}M^s)$ so that
$V_{p,q,r,s}=F^{-1}(\{\bbi\}\times  {\bf I}).$

The pair $(M,N)\in V_{p,q,r,s}$ determines a representation which sends $a$ to $\bbi$ and $b$ to a unit quaternion, say $e^{\gamma\bbk}\bbi$ in ${\bf I}$.  Then $c$ is sent to $N^{-r}aN^r$ and $d$ is sent to $N^{q-r}bN^{q-r}$, so that $c$ and $d$ are sent to the conjugacy class of pure unit quaternions. This shows $V_{p,q,r,s}$ maps into $R(Y_0,K_0)$. 

The map is surjective since any representation in $R(Y_0,K_0)$ can be conjugated to send $a$ to $\bbi$ and $b$ into ${\bf I}$. 
If $A$ and $B$ are sent to non-commuting quaternions then the resulting conjugacy class is uniquely determined.

 If $A, B$ are sent to commuting elements $M,N$, then 
$(M^{s+p}N^{q-r}, N^{-r}M^s)=(\bbi, \pm \bbi)$, and hence $M$ and $N$ lie in the circle $\{e^{\mu \bbi}\}$, say $M=e^{\mu_1 \bbi}, N=e^{\mu_1 \bbi}$.  Therefore 
$$(\bbi, \pm \bbi)=(e^{((s+p)\mu_1+(q+r)\mu_2)\bbi}, e^{(s\mu_1-r\mu_2)\bbi})$$
so that $((s+p)\mu_1+(q+r)\mu_2)=\frac{\pi}{2}$ and $s\mu_1-r\mu_2=\pm \frac{\pi}{2}$ modulo $2\pi$. Since $pr+qs=1$, these equations uniquely determine $\mu_1$ and $\mu_2$ modulo $2\pi$, and hence $M$ and $N$ are again uniquely determined, and so the map is a homeomorphism.
 
One computes $$e^{(\gamma-\theta)\bbk}=-e^{\gamma\bbk}\bbi e^{\theta\bbk}\bbi=(N^{-r}M^s)^{-1}N^{-r}M^{s+p}N^q=M^pN^q.$$
 The space $R(S^3,T_{p,q})$ corresponds to those representations in $R(Y_0,K_0)$ satisfying $a=d $ and $ b=c$.  From Equation (\ref{words}) we see that if $A,B$ are sent to $M,N$,  then $a=d $ and $ b=c$  precisely when $M^pN^q=1$.

 Suppose a representation in $R(S^3,T_{p,q})$ restricts to an abelian representation in the pillowcase $R(S^2,\{a,b,c,d\})$. The representation is given by a pair $(M,N)\in V_{p,q,r,s}$ satisfying $M^pN^q=1$, and, since its restriction is abelian, Equation (\ref{words}) gives
  \begin{equation}
\label{relation1}
\bbi=  M^{s+p}N^{q-r}, ~\sigma\bbi=N^{-r}M^s, ~ \sigma \bbi =N^{-r}\bbi N^r,~
\bbi =N^{-(q-r)}(\sigma \bbi) N^{q-r}
\end{equation}
for some choice of sign $\sigma=\pm1$.

 Hence 
$$N^{-1}\bbi N=N^{-rp-qs}\bbi N^{rp+qs}
=(N^{-r})^{p+s} (N^{-(q-r)})^s \bbi (N^{(q-r)})^s(N^{r})^{p+s}=\sigma^{2s+p}\bbi$$

When  $\sigma^p=1$, this implies that $N=e^{\mu\bbi}$ for some $\mu$. Then 
$M=M^{pr+qs}=N^{-qr}(N^r\sigma\bbi)^q$ and so $M$ and $N$ commute, so that the representation is abelian.

We will show that    $\sigma^p=-1$ is impossible. Suppose to the contrary that $\sigma^p=-1$, so that $\sigma=-1$ and $p$ is odd. The equation $N^{-1}\bbi N=-\bbi$ implies that $N$ is a pure unit quaternion, and hence $N^2=-1$.  Then $-\bbi=N^{-r}\bbi N^r=N^{-(q-r)}\bbi N^{q-r}$ imply that $r$ is odd  and  $q$ is even. Since   $M^p=N^{-q}$, it follows that $M^{2p}=1$.

From Equation (\ref{relation1}) we obtain
$$1=(-\bbi)\bbi=N^{-r}M^sM^{s+p}N^{q-r}=
N^{-r}M^{2s}M^{ p}N^{q}N^{-r}=N^{-r}M^{2s}N^{-r}=-N^{-r}M^{2s}N^{r}$$
so that $M^{2s}=-1$.   But then
 $M^2=M^{2(pr+qs)}=M^{2qs}=(-1)^q=1,
$ 
a contradiction. Hence $\sigma^p\ne-1$.

It follows from \cite{Herald2, Heusener} that $R(S^3, T_{p,q})$ contains $|\sigma(T_{p,q})|/2$ 
non-abelian representations and one abelian representation. Since the restriction of any non-abelians to the pillowcase avoids the corners, and hence correspond to intersection points of the image $V_{p,q,r,s}\to R(S^2,\{a,b,c,d\})$ with the {\em interior} of the arc $\gamma=\theta$.  Applying Propositions  
 \ref{circle or point}, \ref{natcase}, and Theorem \ref{pertvar} we see that each non-abelian representation in $R(S^3, T_{p,q})$ gives rise to two non-degenerate points of $R^\nat(S^3,T_{p,q})$ and the   representation $\alpha$ gives one more non-degenerate point.
  \end{proof}

Theorem \ref{torusknotreps}   does not give an explicit description of  $R(Y_0,K_0) \cong V_{p,q,r,s}$ or its image in the pillowcase in the sense which was carried out for 2-bridge knots above. The space $V_{p,q,r,s}$ is complicated; in fact the map $F:SU(2)\times SU(2)\to SU(2)\times SU(2)$ is not transverse to the interval $\{\bbi\}\times {\bf I}$; we will see an example below where $V_{p,q,r,s}$ is a singular variety.  

We give a more explicit description of $V_{p,q,r,s}$ in two different ways in Proposition \ref{crosssect} and Theorem \ref{toruspt2}  below.  In Proposition \ref{crosssect} we prove that for any $p,q,r,$ and $s$, $V_{p,q,r,s}$ contains an arc which maps to a straight line segment in the pillowcase. In Theorem \ref{toruspt2}  we show that $V_{p,q,r,s}$ is a semi-algebraic set in $\RR^2$ contained in the zero set of a polynomial determined explicitly by the integers $p,q,r,s$. The method of Theorem \ref{toruspt2}  lends itself easily to computer calculation and makes it easy describe $V_{p,q,r,s}$ in particular examples.

\bigskip

Write ${\bf I}$ for $\{\bbi\}\times {\bf I}$, so that $V_{p,q,r,s}=F^{-1}(\bf I)$.   The following  result shows how to construct  a splitting of  $V_{p,q,r,s}\to {\bf I}$, yielding a curve in $V_{p,q,r,s}$ whose image in the pillowcase is a straight line. 
 
\begin{prop}\label{crosssect}  There exists a cross section $s:  {\bf I}\to V_{p,q,r,s}$ of the map $F:V_{p,q,r,s}\to{\bf I}$ whose image consists of binary dihedral representations.  The image of  $s(\bf I)$ in the pillowcase $R(S^2,\{a,b,c,d\})$ is a straight line segment. 

If $p$ and $q$ are odd, or if $p$ is even and $q-2r=\pm 1$, then 
 the initial point $s(\bbi)= s(e^{0\bbk}\bbi)$ is the restriction to $(S^3\setminus B^3, T_{p,q}\setminus (A_1\cup A_2))$ of the unique abelian traceless representation of 
 $\pi_1(S^3\setminus T_{p,q})$.  
\end{prop}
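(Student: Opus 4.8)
The plan is to realize the cross section entirely by binary dihedral representations, i.e.\ those with image in the normalizer $N(C)\subset SU(2)$ of the circle $C=\{e^{t\bbk}\}$. This is the natural choice: the traceless elements $e^{\gamma\bbk}\bbi$ sweep out the coset $C\bbi$, and $N(C)=C\sqcup C\bbi$, so among binary dihedral data this is the only circle whose normalizer can contain $\rho(a),\rho(b),\rho(c),\rho(d)$ as $\gamma$ varies. Since $\pi_1(S^3\setminus(B^3\cup T_{p,q}))$ is free on $A,B$, with $a,b,c,d$ the words in $A,B$ of Equation (\ref{words}), a homomorphism into $N(C)$ is the same as a pair of parities $(\delta_A,\delta_B)\in\ZZ/2\times\ZZ/2$ (recording whether $A,B$ map into $C$ or $C\bbi$) together with angles $m,n$, so that $M=e^{m\bbk}\bbi^{\delta_A}$ and $N=e^{n\bbk}\bbi^{\delta_B}$. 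The first step is to observe that requiring $a=M^{s+p}N^{q-r}$ and $b=N^{-r}M^{s}$ to lie in $C\bbi$ forces, on the $\ZZ/2$ level, $(s+p)\delta_A+(q-r)\delta_B\equiv 1$ and $s\delta_A+r\delta_B\equiv 1\pmod 2$; subtracting gives $p\delta_A\equiv q\delta_B\pmod 2$, and $pr+qs=1$ makes this system uniquely solvable in all cases, with $(\delta_A,\delta_B)=(1,1)$ if $p,q$ are odd, $(1,0)$ if $p$ is even, and $(0,1)$ if $q$ is even.

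Next I would compute $F(M,N)=(M^{s+p}N^{q-r},N^{-r}M^{s})$ on this two--parameter family, using $\bbi e^{t\bbk}\bbi^{-1}=e^{-t\bbk}$, $(e^{t\bbk}\bbi)^2=-1$, and the rotation description of conjugation in Proposition \ref{basic}. By the parity computation both components automatically lie in $C\bbi$, say equal to $\pm e^{\Phi(m,n)\bbk}\bbi$ and $\pm e^{\Psi(m,n)\bbk}\bbi$ with $\Phi,\Psi$ affine in $(m,n)$; a direct check shows that the integer linear part of $(m,n)\mapsto(\Phi,\Psi)$ has determinant $\pm 1$ when $p,q$ are odd, and $\pm(q-2r)$ (resp.\ $\pm(p+2s)$) when $p$ (resp.\ $q$) is even, and in particular is never zero. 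Hence the binary dihedral locus inside $V_{p,q,r,s}$, cut out by $M^{s+p}N^{q-r}=\bbi$, is a circle in the $(m,n)$--torus on which $\Psi$ has nonzero winding number; its part lying over ${\bf I}=\{\gamma\in[0,\pi]\}$ is a disjoint union of $|\det|$ closed arcs, each mapped homeomorphically onto ${\bf I}$ by $F$, and picking any one of them defines a cross section $s$ of $F\colon V_{p,q,r,s}\to{\bf I}$ whose image is binary dihedral by construction. For the straight-line statement I would use that $\gamma=\Psi$ and, via $e^{(\gamma-\theta)\bbk}=M^pN^q$ from Theorem \ref{torusknotreps} (the right side again being $\pm e^{(\text{affine})\bbk}$ because $p\delta_A\equiv q\delta_B$), that $\theta$ is affine in $(m,n)$ as well; restricting the affine maps $\gamma,\theta$ to the affine curve $\{\Phi=0\}$ and lifting to $\RR^2$ shows $(\gamma,\theta)$ runs along a straight line, hence projects to a straight segment in the pillowcase.

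For the last statement, observe that under the stated hypotheses the determinant above is exactly $\pm 1$, so $s$ is unique and $s(\bbi)$ is the unique binary dihedral point of $V_{p,q,r,s}$ over $\gamma=0$. I would then identify it with the restriction of the abelian traceless representation $\alpha_{\pi/2}$ of $\pi_1(S^3\setminus T_{p,q})$: this representation sends $x\mapsto e^{q\pi\bbi/2}=\bbi^{\,q}$, $y\mapsto e^{p\pi\bbi/2}=\bbi^{\,p}$, and every meridian to $\bbi$, so its restriction to $Y_0\setminus K_0$ has image in $\langle\bbi\rangle\subset N(C)$ and is binary dihedral; using $A^p=y$, $A^s=xb$, $B^r=x$, $B^{-q}=d\bar a y$ and $\bar a d\mapsto 1$ one reads off that it sends $A,B$ into $\langle\bbi\rangle$, and a short congruence argument modulo $4$ --- which is precisely where one needs either $p,q$ odd or $p$ even with $q-2r=\pm 1$ --- shows that the resulting parities agree with $(\delta_A,\delta_B)$ and that the corresponding $m,n$ are multiples of $\pi$, i.e.\ that this is exactly the $\gamma=0$ endpoint of the binary dihedral locus. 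Therefore $s(\bbi)$ equals the restriction of $\alpha_{\pi/2}$.

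I expect the main obstacle to be the bookkeeping of the $\pm$ signs produced by raising elements of $C\bbi$ to powers (these depend on $\lfloor(s+p)/2\rfloor$, $\lfloor r/2\rfloor$, etc.\ modulo $2$): these signs fix the additive constants in $\Phi,\Psi$, and hence decide whether the $\gamma=0$ endpoint is actually the abelian class, so pinning down exactly which arithmetic conditions on $p,q,r$ make this work is the delicate point. The parity count, the determinant computation, and the straight-line observation are routine quaternion algebra by comparison.
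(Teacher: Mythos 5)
Your proposal is correct in outline, and it takes a genuinely different route from the paper's proof. The paper writes down an explicit formula for $s(e^{\gamma\bbk}\bbi)$ in each of three cases (split according to the parities of $p$ and $r$, having assumed $q$ odd by symmetry) and verifies by direct quaternion computation that $F\circ s=\mathrm{id}$, that the image is binary dihedral, that $M^pN^q=e^{(\gamma-\theta)\bbk}$ is a linear function of $\gamma$, and that $M(0)^pN(0)^q=1$ under the stated hypotheses. Your argument instead classifies binary dihedral pairs $(M,N)\in N(C)^2$ by their parities $(\delta_A,\delta_B)\in(\ZZ/2)^2$ and a point of the $(m,n)$-torus, observes that the traceless conditions give a $\ZZ/2$-linear system with determinant $pr+qs=1$ and hence a unique solution for $(\delta_A,\delta_B)$, and then studies the induced affine map $(m,n)\mapsto(\Phi,\Psi)$ to the pillowcase coordinates. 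This conceptual framing makes the existence of the cross section, the linearity of the image, and the role of $pr+qs=1$ transparent, at the cost of needing a careful sign/offset bookkeeping (which you correctly flag) and a brief argument that the curve $\{\Phi=0\}$ supports a single-valued section over $\mathbf{I}$. The determinant values you quote ($\pm 1$ when $p,q$ are odd, $\pm(q-2r)$ when $p$ is even, $\pm(p+2s)$ when $q$ is even) check out, as do the parities $(\delta_A,\delta_B)$. Note that the paper's case split on the parity of $r$ matters to the explicit formulas even when $(\delta_A,\delta_B)$ is the same (e.g.\ $(\Phi,\Psi)=(m,n)$ when $r$ is odd versus $(n,m)$ when $r$ is even), so the "routine" sign bookkeeping is precisely where the paper's three explicit cases reappear; your outline is aware of this but does not carry it out. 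The identification of $s(\bbi)$ with the restriction of $\alpha_{\pi/2}$ in the last paragraph is the right idea but is the sketchiest part; the paper's verification is essentially the same computation specialized to $\gamma=0$, and the extra hypotheses on $p,q,r$ enter exactly as you say.
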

\begin{proof} Assume that $q$ is odd by interchanging $p$ and $q$ if necessary. Fix $\gamma\in [0,\pi]$. We construct $s(e^{\gamma\bbk}\bbi)$.
We consider three cases: 
\begin{enumerate}
\item $p, r$ both odd,
\item $p $ odd, $ r$ even,
\item $p$ even 
\end{enumerate}
 For the first case, $p,q, r$, and $p+s$ are odd, and $s$ and $q-r$ are even. 
 Set
 $$s(e^{\gamma\bbk}\bbi)=((-1)^{(p+s+q-r-1)/2}\bbi, (-1)^{(s-r-1)/2}e^{\gamma\bbk}\bbi).\ 
 $$
 For the second case, $p,q,s$, and $q-r$ are odd, and $r, p+s$ are even.
  Set
 $$s(e^{\gamma\bbk}\bbi)=((-1)^{(r-s+1)/2}e^{\gamma\bbk}\bbi,(-1)^{(p+s+q-r-1)/2}\bbi).
$$
For the third case, $q,s,$ and $p+s$ are odd.
Set $$s(e^{\gamma\bbk}\bbi)=(e^{\tau\bbk}\bbi, e^{\psi\bbk}),$$ where 
$$\tau= \frac{q-r}{q-2r}  \gamma  +  \frac{\pi(rp-qs+2rs+q-2r)}{2(q-2r)} ,
\psi=\frac{1}{q-2r}\gamma + \frac{\pi  p }{2(q-2r)}.
 $$
A calculation shows that in each of the three cases, $F(s(e^{\gamma\bbk}\bbi))=(\bbi, e^{\gamma\bbk}\bbi)$.  Note that in each case the generators are sent to    the binary dihedral subgroup 
$\{e^{\theta\bbk}\}\cup \{e^{\theta\bbk}\bbi\}$ of $SU(2)$.

Denote $ s(e^{\gamma\bbk}\bbi)$ by $(M(\gamma),N(\gamma))$.  Then $M(0)$ and $N(0)$ commute so that the corresponding representation is abelian.  Moreover, when $p$ and $q$ are both odd, or if $p$ is even and $q-2r=\pm 1$,   $M(0)^pN(0)^q=1$, so that this abelian representation extends to a traceless abelian representation of $\pi_1(S^3\setminus T_{p,q})$.

Theorem \ref{torusknotreps}  implies that $e^{(\gamma-\theta)\bbk}=M^p(\gamma)N^q(\gamma)$.   For the three cases:
$$
e^{(\gamma-\theta)\bbk}=M^p(\gamma)N^q(\gamma)
=\begin{cases}e^{-\gamma\bbk}&\text{case (1)}, \\
e^{\gamma\bbk}&\text{case (2)},\\
e^{\frac{q}{q-2r}\gamma\bbk}e^{\frac{\pi p (2q-2r)}{2(q-2r)}\bbk}&\text{case (3)},
\end{cases}
$$
so that in each case $\theta$ is a linear function of $\gamma\in [0,\pi]$.
\end{proof}

\medskip

We turn now to a different description of $R(Y_0,K_0)\cong V_{p,q,r,s}$ in terms of Chebyshev polynomials. Theorem \ref{toruspt2} below roughly says that if $(M,N)=(e^{uQ}, e^{vR})\in V_{p,q,r,s},$ then there exists a polynomial $p(x,y)$ so that $p(\cos u,\cos v)=0$, and that conversely, the zero set of this polynomial, subject to some inequalities (essentially $|x|,|y|\leq 1$), parameterizes $V_{p,q,r,s}$.

\medskip
First, we return to Equation (\ref{teq}).    Using Lemma \ref{basic} we may rewrite this  as
\begin{equation}
\label{toruseqns}
\begin{split}
0&=\cos((s+p)u)\cos((q-r)v)-\sin((s+p)u)\sin((q-r)v) Q\cdot R,  \\
0&=\cos(su)\cos(-rv)-\sin(su)\sin(-rv) Q\cdot R
\end{split}
\end{equation}
where $Q\cdot R$ denotes the dot product of $Q$ and $R$.

Therefore, if we define $\tilde V_{p,q,r,s}$ to be the set of pairs $(M,N)=(e^{uQ},e^{vR})\in SU(2)\times SU(2)$ satisfying Equations (\ref{toruseqns}), then $\tilde V_{p,q,r,s} $ is in bijective correspondence with all representations (not conjugacy classes) of traceless representations.  Note that $ V_{p,q,r,s}\subset \tilde V_{p,q,r,s}$.

For each integer $n$, there exist (Chebyshev) polynomials $T_n(x)$ and $S_n(x)$ so that 
 \begin{equation}
\label{chubbychef}
\cos(nu)=T_n(\cos u) \text{~ and ~}\sin(nu)=\sin uS_n(\cos u).
\end{equation}
Hence Equation (\ref{toruseqns}) can be rewritten as
\begin{equation}
\label{toruseqns2}
\begin{split}
0&=T_{s+p}(\cos u)T_{q-r}(\cos v)-\sin(u)\sin(v) S_{s+p}(\cos u)S_{q-r}(\cos v)Q\cdot R,  \\
0&=T_{s}(\cos u)T_{-r}(\cos v)-\sin(u)\sin(v) S_{s}(\cos u)S_{-r}(\cos v)Q\cdot R.
\end{split}
\end{equation}
Substituting $x=\cos u, y=\cos v$, multiplying the first equation by $S_s(x)S_{-r}(y)$, the second by $S_{s+p}(x) S_{q-r}(y)$ and subtracting yields the polynomial equation 
\begin{equation}
\label{toruseqns4}
p_{p,q,r,s}(x,y):=T_{s+p}(x)T_{q-r}(y) S_{s}(x)S_{-r}(y)- S_{s+p}(x)S_{q-r}(y)T_{s}(x)T_{-r}(y)=0.\end{equation}
Thus the map
$$ \tilde V_{p,q,r,s}\to\RR^2,~(e^{uQ}, e^{vR})\mapsto (\cos u , \cos v)$$
takes its image in the zero set of the polynomial $p_{p,q,r,s}(x,y)$ of Equation (\ref{toruseqns4}).

  Denote by $Z$ the zero set of  $p_{p,q,r,s}$.  For $(x,y)\in Z$,    the two ratios 
  \begin{equation}
  \label{toruseqns3}
\frac{T_{s+p}(x)T_{q-r}(y)}{ \sqrt{(1-x^2)(1-y^2)}S_{s+p}(x)S_{q-r}(y)}  
, ~\frac{T_{s}(x)T_{-r}(y)}{ \sqrt{(1-x^2)(1-y^2)}S_{s}(x)S_{-r}(y)}
\end{equation}
 are equal if neither denominator vanishes.
 
 Denote by  $Z_0\subset Z$  the subset containing those points so that 
 at least one of the denominators in the ratios of (\ref{toruseqns3}) is non-zero, and if the other is zero, so is its numerator. 
Define the function  $\tau(x,y)$ on $Z_0$ to be one of these two ratios, so that 
\begin{equation*}
\tau(x,y)=\frac{T_{s+p}(x)T_{q-r}(y)}{ \sqrt{(1-x^2)(1-y^2)}S_{s+p}(x)S_{q-r}(y)}  
=\frac{T_{s}(x)T_{-r}(y)}{ \sqrt{(1-x^2)(1-y^2)}S_{s}(x)S_{-r}(y)}
\end{equation*}
 for $(x,y)\in Z_0$ (and at least one of these ratios is defined).

 Denote by $Z_1\subset Z$ the subset containing those points so that 
 both denominators  and both numerators in the ratios of (\ref{toruseqns3}) are zero.
 
Call two pairs of quaternions $(M,N)$ and $(M',N')$  conjugate if there exists $g\in SU(2)$ so that $(M',N')=(gMg^{-1}, gNg^{-1})$.

\begin{thm}\label{toruspt2}  Let $p_{p,q,r,s}(x,y)$  be the polynomial defined above in Equation (\ref{toruseqns4}), and let $Z$ denote its zero set in $\RR^2$, and $Z_0,Z_1\subset Z$ the subsets defined above.

   If $(M,N)=(e^{uQ},e^{vR})\in  V_{p,q,r,s}$ then $p_{p,q,r,s}(\cos u, \cos v)=0$, and  the fiber of the map $$V_{p,q,r,s}\to Z,~  (e^{uQ},e^{vR})\mapsto(\cos u,\cos v)$$ over a point $(x,y)\in Z$ is given as follows.
\begin{enumerate}
\item If $|x|< 1, |y|< 1$, $(x,y)\in Z_0$, and  $|\tau(x,y)|\leq 1$,    then the fiber over $(x,y)$ is a single point,   conjugate to $(e^{uQ}, e^{vR})$, where $u=\arccos x, v=\arccos y, $  and
$$ Q=\bbi, R=e^{t\bbk}\bbi,t=\arccos(\tau(x,y)).$$ 
The corresponding point $(\gamma,\theta)$ in the pillowcase $R(S^2,\{a,b,c,d\})$ satisfies
\begin{equation*}
\begin{split}
\cos\gamma&=-T_{2s+p}(x)T_{q-2r}(y)+\sqrt{(1-x^2)(1-y^2)} S_{2s+p}(x)S_{q-2r}(y)\tau(x,y),  \\
\cos(\gamma-\theta)&=T_{p}(x)T_{q}(y)-\sqrt{(1-x^2)(1-y^2)} S_{p}(x)S_{q}(y) \tau(x,y).\end{split}
\end{equation*}

\item If $|x|< 1, |y|< 1$,  and $(x,y)\in Z_1$, then   the fiber over $(x,y)$ is an arc conjugate to the arc $
t\mapsto  (e^{u\bbi}, e^{v e^{t\bbk}\bbi}), t\in [0,\pi]$, where $u=\arccos x, v=\arccos y$.

The image $(\gamma(t), \theta(t))$ of this path in the pillowcase $R(S^2,\{a,b,c,d\})$ satisfies
\begin{equation*}
\begin{split}
\cos\gamma&=-T_{2s+p}(x)T_{q-2r}(y)+\sqrt{(1-x^2)(1-y^2)} S_{2s+p}(x)S_{q-2r}(y)\cos t  \\
\cos(\gamma-\theta)&=T_{p}(x)T_{q}(y)-\sqrt{(1-x^2)(1-y^2)} S_{p}(x)S_{q}(y) \cos t.\end{split}
\end{equation*}

\item If $|x|\leq 1, |y|\leq 1$, one of $|x|,|y|$ equals 1,  and $(x,y)\in Z_1$, then   the fiber over $(x,y)$ is a single point. 

The corresponding point $(\gamma,\theta)$ in the pillowcase $R(S^2,\{a,b,c,d\})$ satisfies
\begin{equation*}
\begin{split}
&\cos\gamma=-T_{2s+p}(x)T_{q-2r}(y)  \\
&\cos(\gamma-\theta)=T_{p}(x)T_{q}(y).\end{split}
\end{equation*}

\item In all other cases, the fiber is empty, i.e.~ $(x,y)$ is not in  the image.
\end{enumerate}

\end{thm}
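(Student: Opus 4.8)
The plan is to analyze the fiber of the map $V_{p,q,r,s}\to Z$, $(e^{uQ}, e^{vR})\mapsto (\cos u, \cos v)$, case by case, using the two defining equations (\ref{toruseqns}) together with the constraint that the target point lies in the pillowcase. First I would recall that a point of $V_{p,q,r,s}$ is, up to conjugation, determined by the triple $(u, v, Q\cdot R)$: one normalizes $Q=\bbi$ and $R=e^{t\bbk}\bbi$ with $t\in[0,\pi]$, so that $Q\cdot R = \cos t$, and the pair $(M,N)$ depends only on $(u,v,t)$ modulo the residual stabilizer of $\bbi$. (When $\sin u \sin v \ne 0$, fixing $M = e^{u\bbi}$ leaves only a circle of conjugations, which acts on $R$ by rotation about $\bbi$; normalizing $R$ into the $\bbi\bbk$-plane with $t\in[0,\pi]$ rigidifies this.) Then equations (\ref{toruseqns}), rewritten via Chebyshev polynomials as (\ref{toruseqns2}), become two linear equations in the single unknown $Q\cdot R$ with coefficients that are functions of $x=\cos u$ and $y = \cos v$. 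The compatibility of these two equations is exactly $p_{p,q,r,s}(x,y)=0$, i.e.~$(x,y)\in Z$; this establishes that the image of $V_{p,q,r,s}$ lies in $Z$.

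Next I would dissect $Z$ according to whether the coefficient $\sin u \sin v\, S_{s+p}(x) S_{q-r}(y)$ (resp.~$S_s(x) S_{-r}(y)$) of $Q\cdot R$ vanishes. This is precisely the stratification into $Z_0$ and $Z_1$. On $Z_0$, at least one of the two linear equations genuinely pins down $Q\cdot R = \tau(x,y)$ (the other equation being automatically satisfied once $(x,y)\in Z$ and one normalizes the degenerate case so the numerator vanishes too when the denominator does). Requiring $Q,R\in C(\bbi)$ with $R = e^{t\bbk}\bbi$ forces $\cos t = \tau(x,y) \in [-1,1]$, which is the hypothesis $|\tau(x,y)|\le 1$; this gives a unique $t\in[0,\pi]$, hence a unique conjugacy class, proving case (1). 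When $|x|<1,|y|<1$ and $(x,y)\in Z_1$, both equations collapse to $0=0$, so $Q\cdot R$ is unconstrained and $t$ ranges freely over $[0,\pi]$: the fiber is an arc. Case (3), where additionally $|x|=1$ or $|y|=1$, is the degenerate version of this: if $x=\pm 1$ then $M=\pm 1$ is central, so conjugating no longer has a free circle — all choices of $t$ give conjugate pairs — collapsing the arc to a point. In each of cases (1)–(3), once $(M,N)$ is known, the pillowcase coordinates $(\gamma,\theta)$ follow from the stated formula via $e^{\gamma\bbk}\bbi = N^{-r}M^s$ and $e^{(\gamma-\theta)\bbk} = M^p N^q$ (from Theorem \ref{torusknotreps}), expanded using $\Real(e^{aQ}e^{bR}) = \cos a\cos b - \sin a \sin b\, (Q\cdot R)$ and the Chebyshev substitution; I would simply cite this computation rather than grind it out. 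Finally, case (4) is the complement: if $|x|>1$ or $|y|>1$ the angles $u,v$ are not real, or if $(x,y)\notin Z_0\cup Z_1$ or $|\tau(x,y)|>1$ there is no admissible $Q\cdot R\in[-1,1]$, so the fiber is empty.

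The main obstacle, and the step I would treat most carefully, is the bookkeeping around the degenerate loci where a Chebyshev $S$-factor vanishes: one must check that the \emph{numerator} $T_\bullet$ also vanishes there (so that the "$0 = T\cdot T - \sin u\sin v\,S\cdot S\,(Q\cdot R)$" equation remains satisfiable) and that this is captured correctly by the definitions of $Z_0$ and $Z_1$. This is where the subtle distinction between $Z_0$ and $Z_1$ does its work, and where an honest proof has to verify that the two ratios in (\ref{toruseqns3}) agree on $Z_0$ whenever both are defined — which is just the statement $p_{p,q,r,s}(x,y)=0$ cleared of denominators — and that $\tau(x,y)$ is well-defined (independent of which ratio one uses) on all of $Z_0$. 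A secondary, purely computational obstacle is confirming the explicit $\cos\gamma$ and $\cos(\gamma-\theta)$ formulas; these follow from expanding $N^{-r}M^{s+p}N^q \cdot(\text{reduction})$, but the indices ($2s+p$, $q-2r$, etc.) require some care with the Chebyshev product-to-sum identities. I do not expect any conceptual difficulty there, only the need to be meticulous.
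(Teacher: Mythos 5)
Your proposal follows essentially the same route as the paper's proof: normalize $(Q,R)$ to $(\bbi, e^{t\bbk}\bbi)$ so that the data reduces to $(u,v,Q\cdot R)$, rewrite (\ref{toruseqns}) via Chebyshev polynomials as a linear system in the unknown $Q\cdot R$, identify the resultant with $p_{p,q,r,s}(x,y)$, stratify $Z$ by whether the $S$-coefficients of $Q\cdot R$ vanish (yielding $Z_0$ and $Z_1$), use $|\tau(x,y)|\le 1$ as the solvability condition in case (1), observe the arc in case (2), note that centrality of $M$ or $N$ when $|x|=1$ or $|y|=1$ collapses the arc to a point in case (3), and derive the pillowcase formulas from $\Real(e^{aQ}e^{bR}) = \cos a \cos b - \sin a \sin b\,(Q\cdot R)$. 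You also correctly flag the delicate point — verifying that when an $S$-factor vanishes the matching $T$-factor vanishes too, which is exactly what the $Z_0$/$Z_1$ definitions are designed to encode — and that is indeed where the paper devotes its careful bookkeeping.
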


\begin{proof} If $(M,N)=(e^{uQ}, e^{vR})\in V_{p,q,r,s}$, then $(u,v,Q,R)$ satisfy the Equations (\ref{toruseqns2}), and hence satisfy $p_{p,q,r,s}(x,y)=0$ where $x=\cos v, y=\cos u$. In particular, $|x|\leq 1$, $|y|\leq 1$. 

If neither of the denominators in Equation (\ref{toruseqns3}) vanishes, then $ (x,y)\in Z_0$, $|x|<1, |y<1$,   and $\tau(x,y)=Q\cdot R$ and hence $|\tau(x,y)|\leq 1$. If exactly one of the denominators in Equation (\ref{toruseqns3}) vanishes, then 
Equations (\ref{toruseqns2}) show its numerator also vanishes, thus $(x,y)\in Z_0$  and   $|\tau(x,y)|=|Q\cdot R|\leq 1$.
 If $(x,y)\not\in Z_0$, then Equations (\ref{toruseqns2}) show that both numerators are zero, so that $(x,y)\in Z_1$.  Moreover, $Q\cdot R$ can be any number in $[-1,1]$ i.e.~ $Q\cdot R=\cos t $ for some $t\in [0,\pi]$.  By conjugating the pair $(Q,R)$ we may assume $Q=\bbi$ and $R=e^{t\bbk}\bbi$.
Thus    the image 
of $V_{p,q,r,s}\to Z$ lies in   $Z_0\cup Z_1$ and we have established (3).

Conversely, suppose that $(x,y)\in Z_0$, $|x|<1,|y|<1$, and $|\tau(x,y)|\leq1$. Let 
$$u=\arccos x, v=\arccos y, t=\arccos(\tau(x,y)), Q=\bbi, R=e^{t\bbk}\bbi.$$
Then $(u,v,Q,R)$ satisfy (\ref{toruseqns2}). Hence the pair $(M',N')=(e^{uQ}, e^{vR})$ satisfies Equation (\ref{teq}), thus defining a traceless representation, which is uniquely conjugate to a pair $(M,N)\in V_{p,q,r,s}$ by Theorem \ref{torusknotreps}.  Conjugation does not change $u,v$ nor $x,y$. 
Proposition  \ref{s2prop} implies that  there exists a unit quaternion  which conjugates the triple 
$$(e^{(p+s)uQ}e^{(q-r)vR},e^{-rvR}e^{suQ},e^{puQ}e^{qvR})$$
to 
$$(\bbi, e^{\gamma \bbk}\bbi, e^{(\gamma-\theta)\bbk}).$$
Therefore, 
$$-\cos\gamma= \Real(e^{-rvR}e^{suQ}e^{(p+s)uQ}e^{(q-r)vR})\text{~and~}
\cos(\gamma-\theta)=\Real(e^{puQ}e^{qvR}).
$$
Using conjugation invariance and part (4) of Proposition \ref{basic}, these equations can be rewritten as
\begin{equation}
\label{pilloweqns}
\begin{split}
-\cos\gamma&=\cos((2s+p)u)\cos((q-2r)v)-\sin((2s+p)u)\sin((q-2r)v) Q\cdot R,  \\
\cos(\gamma-\theta)&=\cos(pu)\cos(qv)-\sin(pu)\sin(qv) Q\cdot R
\end{split}
\end{equation}
Substituting $x$ and $y$  transforms these to
\begin{equation*}
\begin{split}
-\cos\gamma&=T_{2s+p}(x)T_{q-2r}(y)-\sqrt{(1-x^2)(1-y^2)} S_{2s+p}(x)S_{q-2r}(y)\tau(x,y),  \\
\cos(\gamma-\theta)&=T_{p}(x)T_{q}(y)-\sqrt{(1-x^2)(1-y^2)} S_{p}(x)S_{q}(y) \tau(x,y),\end{split}
\end{equation*}
establishing (1).

Suppose that $(x,y)\in Z_1$ and $|x|\leq 1,|y|\leq 1$.  Then, for any $t\in [0,\pi]$, define
$$u=\arccos x, v=\arccos y,  Q=\bbi, R=e^{t\bbk}\bbi.$$
Then $(u,v,Q,R)$ satisfy (\ref{toruseqns2}). Hence the pair $(M',N')=(e^{uQ}, e^{vR})$ satisfies Equation (\ref{teq}), defining a traceless representation  which is  conjugate to a pair $(M,N)\in V_{p,q,r,s}$ by Theorem \ref{torusknotreps}.  Conjugation does not change $u,v, Q\cdot R$, nor $x,y$. 

If $|x|=1$, then $u=0$ or $\pi$, so that $(M',N')=(\pm 1, e^{vR})$, which is conjugate to $(\pm 1, e^{v\bbi})\in V_{p,q,r,s}$, and in particular independent of $t$. Thus the fiber of $V_{p,q,r,s}\to Z_1\cap \{ |x|=1\}$ is a single point.  If $|y|=1$, then $(M',N')=(e^{u\bbi}, \pm 1)=(M,N)$ and so again the fiber of $V_{p,q,r,s}\to Z_1\cap \{ |y|=1\}$ is a single point. 

When $|x|<1$ and $|y|<1$, $(M,N)$ is uniquely determined by $(M',N')$.
The assertions about $\cos \gamma $ and $\cos(\gamma-\theta)$ follow from 
Equation (\ref{pilloweqns}).
Thus we have established (2) and (3).
\end{proof}

The statement of   Theorem \ref{toruspt2} is unfortunately somewhat technical, and does not easily reveal the structure of $R(Y_0,K_0)$ and its image in the pillowcase.  However,   the polynomials $p_{p,q,r,s}(x,y)$ can be computed and their zero sets graphed using  computer algebra software. 
We present a few examples.

\medskip
  
The  $(2,2n+1)$ torus knots are particularly simple to understand in this context. Take $p=2, q=2n+1, r=n+1, s=-1$. Then $p_{2, 2n+1, n+1, -1}(x,y)=x$, and $\tau(x,y)=0$ along the arc  $x=0, y\in [-1,1]$. Theorem \ref{toruspt2} then says that  $V_{2, 2n+1, n+1, -1}$ is an arc, and gives a  parameterization 
by $(e^{uQ}, e^{vR})$ where $$u=\arccos(0)=t=\tfrac{\pi}{2}, v=\arccos(y)\in [0,\pi], Q=\bbi, R=e^{t\bbk}\bbi=\bbj.$$ Conjugating the pair $(e^{uQ}, e^{vR})=(\bbi, e^{v\bbj})$ by $e^{\frac{\pi}{4}\bbi}e^{\frac{\pi n v}{2}\bbj}$ yields the pair 
$(M,N)=(e^{nv\bbk}\bbi, e^{v\bbk})$ which satisfies $F(M,N)=(\bbi, e^{(\pi-v)\bbk}\bbi)$, and hence parametrizes $V_{2,2n+1,n+1, -1}$.  Computing $M^2N^{2n+1}$ yields $e^{(\pi + (2n+1)v)\bbk}$ and from Theorem \ref{torusknotreps} we see that $\gamma=\pi-v$ and $\gamma-\theta=\pi+ (2n+1)v$, so that 
$\theta = (2n+2)\gamma
$ mod $2\pi$. This is the same arc identified in the examination of 2-bridge knots in Section \ref{twobridgesection}, and is also the same arc produced by the cross section of Proposition \ref{crosssect} (note that $q-2r=-1$).
  Theorem \ref{toruspt2} gives $\cos\gamma=-\cos v$ and $\cos(\gamma-\theta)=-\cos((2n+1)v)$. The first equation implies $\gamma=\pi-v$ and the second that $\gamma-\theta=\pi \pm (2n+1) v$, which implies $(1\pm(2n+1))\gamma=\theta$. So these equations are not quite sharp enough to give $\theta=(2n+2)\gamma$.

\bigskip

We turn to the $(3,4)$ torus knot. This knot is interesting because $R(Y_0,K_0)$ is  singular and, as we shall see below, the 
instanton complex $CI^\nat(S^3,T_{3,4})$  has a non-trivial differential.  

Take $p=3,q=4, r=3, s=-2$. The polynomial $p_{3,4,3,-2}(x,y)$ of Equation (\ref{toruseqns4}) is computed, using Equation (\ref{chubbychef}), as
$$p_{3,4,3,-2}(x,y)= y \left( 4\,{x}^{2}+4\,{y}^{2}-3 \right).$$
We show that $V_{3,4,3,-2}$
is the union of an arc and a circle that meet in two points.  

The 
zero set of $p_{3,4,3,-2}$ meets the set $Z_0\cup Z_1$ of Theorem \ref{toruspt2} in union of the arc $(x,0), x\in [-1,1]$ and the circle $x^2+y^2=\frac{3}{4}$.
The endpoints $(\pm 1,0)$ of the arc lie in $Z_1$ and fall under case (3) of Theorem \ref{toruspt2}, and the rest of the points lie in $Z_0$. Moreover, $\tau(x,y)=\frac{xy}{\sqrt{(1-x^2)(1-y^2)}}$, which is less than 1 on $Z_0$, so that the map $$V_{3,4,3,-2}\to Z\cap \{ |x|\leq 1, |y|\leq 1\}$$ is a homeomorphism.

Applying Theorem \ref{toruspt2} we see that the arc $(x,0), x\in[-1,1]$ has $y=0$ and $\tau(x,y)=0$, so that $v=t=\frac{\pi}{2}$ and hence  gives
 the arc $(e^{uQ}, e^{vR})=(e^{u\bbi}, \bbj), ~u\in [0,\pi]$ in $\tilde V_{3,4,3,-2}$. Conjugating by   $e^{-\frac{\pi}{4}\bbi}e^{-\frac{\pi}{4}\bbk}e^{-\frac{u}{2}\bbi} $   yields the arc 
 $(e^{u\bbk}, e^{-u\bbk}\bbi)$. This arc lies in $V_{3,4,3,-2}$,
since it is  sent by $F$ to the arc $(\bbi, e^{u\bbk}\bbi)$.  This   is the arc identified in Proposition \ref{crosssect} (after a change of notation, since we are taking $p$ odd and $q$ even here; the condition $q-2r=\pm 1$ transforms to $p+2s=\pm 1$).

Since $y=\cos v= 0$ along this arc and $\tau(x,0)=0$, Theorem \ref{toruspt2}
gives
$$\begin{pmatrix} \cos\gamma\\ \cos(\gamma-\theta)\end{pmatrix}
=\begin{pmatrix} -T_1(x)T_{-2}(y)\\ T_3(x)T_4(y)\end{pmatrix}
=\begin{pmatrix} \cos u\\ \cos (3u)\end{pmatrix}
$$
and so $\gamma=u$ and $\gamma-\theta=\pm 3u$.
Thus $\theta=-2\gamma$ or $\theta= 4\gamma$.  At the point $x=0$,
$u=\frac{\pi}{2}$ and so $(e^{u\bbk}, e^{-u\bbk}\bbi)=(\bbk,-\bbj)$. Hence
$N^{-r}M^s=\bbj=e^{\frac{\pi}{2}\bbk}\bbj$ so that $\gamma=\frac{\pi}{2}$, and
$M^pN^q=-\bbk=e^{-\frac{\pi}{2}}\bbk$ so that $\gamma-\theta=-\frac{\pi}{2}$.

Hence at this point (and by continuity along the entire arc) $\theta=-2\gamma$  rather than $4\gamma$. We denote this arc in $V_{3,4,3,-2}$ by ${\bf I}_0$, and consider it parameterized by $\gamma=u\in [0,\pi]$.

The circle $4x^2+4y^2-3=0$ lies entirely in $Z_0$, since $\tau(x,y)<1$ on this circle.   It intersects the arc ${\bf I}_0$ in the points $(x,y)=(\pm \frac{\sqrt{3}}{2},0)$
and since $x=\cos(u)$ and $u=\gamma$, the intersection points occur when $\gamma=\frac{\pi}{6}$ and $\frac{5\pi}{6}$.  In particular $V_{3,4,3,-2}$ is singular, and so the map $F:SU(2)\times SU(2)\to SU(2)\times SU(2)$ of Theorem \ref{torusknotreps}    is not transverse to $\bbi \times \bf I$, in contrast to the case for $(2,n)$ torus knots.
 
We use Theorem \ref{toruspt2}
 to find the image of the circle in the pillowcase.  When $4x^2+4y^2=3$, then 
 one  calculates
$\cos\gamma= x$ and $ \cos(\gamma-\theta)=-4x^3+3x.$
Since $\cos(3\gamma)=4\cos^2\gamma+3\cos\gamma$, this  yields
$$\cos(\gamma-\theta)=-\cos3\gamma,$$
so that $\theta$ equals $\pi +4\gamma$ or $\pi -2\gamma$.  
 Since $\cos\gamma=x=\cos u$ and $\gamma,u\in[\frac{\pi}{6},\frac{5\pi}{6}]$, it follows that $u=\gamma$. From this one computes that at the points $(x,y)=(0, \pm \frac{\sqrt{3}}{2})$, $\gamma=\frac{\pi}{2}$ and $\theta=\pi$, so that 
 $$\theta=\pi + 4\gamma.$$
 
The map of the circle to the pillowcase takes the pairs $(x,y)$ and $(x, -y)$ in this circle  to the same point.  Thus we have proved the following.

\begin{prop} For the $(3,4)$ torus knot, the space $V_{3,4,3,-2}\cong R(Y_0,K_0)$ is
homeomorphic to the  union of three arcs, 
$${\bf I}_0:[0,\pi]\to V_{3,4,3,-2}, ~{\bf I}_\pm:[\tfrac{\pi}{6},\tfrac{5\pi}{6}]\to V_{3,4,3,-2},$$
where $$ {\bf I}_\pm(\tfrac{\pi}{6})={\bf I}_0(\tfrac{\pi}{6}), ~
{\bf I}_\pm(\tfrac{5\pi}{6})={\bf I}
 _0(\tfrac{5\pi}{6}).$$
 The arc ${\bf I}_0$ maps to $\theta=-2\gamma$ and each arc ${\bf I}_\pm$ maps to the arc $\theta=4\gamma+\pi$ in the pillowcase.\qed
 \end{prop}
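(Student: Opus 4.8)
The plan is to assemble the computations already carried out in the paragraphs preceding the statement, organizing them around Theorem~\ref{toruspt2}. First I would recall that, via the Chebyshev identities of Equation~\eqref{chubbychef}, the polynomial of Equation~\eqref{toruseqns4} for $(p,q,r,s)=(3,4,3,-2)$ is $p_{3,4,3,-2}(x,y)=y(4x^2+4y^2-3)$, so that its zero set $Z$, intersected with the square $\{|x|\le 1,|y|\le 1\}$, is the union of the segment ${\bf L}=\{(x,0):|x|\le 1\}$ and the circle ${\bf C}=\{4x^2+4y^2=3\}$. The next step is the bookkeeping that feeds Theorem~\ref{toruspt2}: the two endpoints $(\pm 1,0)$ of ${\bf L}$ lie in $Z_1$ with $|x|=1$ and so fall under case~(3), while the interior of ${\bf L}$ and all of ${\bf C}$ lie in $Z_0$, and there $|\tau(x,y)|=|xy|/\sqrt{(1-x^2)(1-y^2)}<1$; hence Theorem~\ref{toruspt2}(1),(3) shows the map $V_{3,4,3,-2}\to Z\cap\{|x|\le 1,|y|\le 1\}$ has a single point over every point of ${\bf L}\cup{\bf C}$, and a compactness argument (compact source, Hausdorff target, continuous bijection) upgrades this bijection to a homeomorphism.

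I would then treat the segment ${\bf L}$. Along it $y=0$ and $\tau=0$, so $v=t=\tfrac{\pi}{2}$ and the representation is conjugate to $(e^{u\bbi},\bbj)$, $u\in[0,\pi]$; conjugating by $e^{-\pi\bbi/4}e^{-\pi\bbk/4}e^{-u\bbi/2}$ brings this to $(e^{u\bbk},e^{-u\bbk}\bbi)$, which $F$ sends to $(\bbi,e^{u\bbk}\bbi)$, identifying it with the cross section of Proposition~\ref{crosssect} (with the roles of $p$ and $q$ interchanged, so that $q-2r=\pm 1$ becomes $p+2s=\pm 1$). Call this arc ${\bf I}_0$, parameterized by $\gamma=u\in[0,\pi]$. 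Theorem~\ref{toruspt2} then yields $\cos\gamma=-T_1(x)T_{-2}(y)=\cos u$ and $\cos(\gamma-\theta)=T_3(x)T_4(y)=\cos 3u$, so $\theta\equiv-2\gamma$ or $\theta\equiv 4\gamma\pmod{2\pi}$; evaluating at $u=\tfrac{\pi}{2}$, where ${\bf I}_0(\tfrac\pi2)=(\bbk,-\bbj)$ gives $N^{-r}M^s=\bbj$ and $M^pN^q=-\bbk$, forces $\gamma=\tfrac\pi2$ and $\gamma-\theta=-\tfrac\pi2$, and hence $\theta=-2\gamma$ along all of ${\bf I}_0$ by continuity.

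Next I would treat the circle ${\bf C}$. It lies entirely in $Z_0$, so each of its points has a single-point fiber, and it meets ${\bf L}$ precisely where $y=0$, i.e.\ at $(x,y)=(\pm\tfrac{\sqrt3}{2},0)$, which correspond to $u=\tfrac\pi6$ and $u=\tfrac{5\pi}{6}$ on ${\bf I}_0$ since $x=\cos u=\cos\gamma$. Define ${\bf I}_+$ and ${\bf I}_-$ to be the preimages in $V_{3,4,3,-2}$ of the two semicircles $y\ge 0$ and $y\le 0$ of ${\bf C}$; each is an arc running between the $(\tfrac{\sqrt3}{2},0)$-fiber and the $(-\tfrac{\sqrt3}{2},0)$-fiber, hence glued to ${\bf I}_0$ at the parameters $\tfrac\pi6$ and $\tfrac{5\pi}{6}$, which is exactly the claimed incidence structure. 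For the pillowcase image, on ${\bf C}$ one has $T_1(x)=x$ and $T_3(x)=4x^3-3x$, and substituting $y^2=\tfrac34-x^2$ into the formulas of Theorem~\ref{toruspt2}(1) gives, after simplification, $\cos\gamma=x$ and $\cos(\gamma-\theta)=-4x^3+3x=-\cos 3\gamma$, so $\theta\equiv\pi+4\gamma$ or $\theta\equiv\pi-2\gamma\pmod{2\pi}$; since $\cos\gamma=x=\cos u$ with $\gamma,u\in[\tfrac\pi6,\tfrac{5\pi}{6}]$ forces $\gamma=u$, evaluation at $(x,y)=(0,\pm\tfrac{\sqrt3}{2})$, where $\gamma=\tfrac\pi2$ and $\theta=\pi$, selects $\theta=\pi+4\gamma$ on both ${\bf I}_+$ and ${\bf I}_-$. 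The observation that $(x,y)$ and $(x,-y)$ on ${\bf C}$ project to the same pillowcase point then gives ${\bf I}_+$ and ${\bf I}_-$ the common image $\theta=4\gamma+\pi$, and collecting the three arcs with their incidences finishes the argument.

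The step I expect to be the main obstacle is the local analysis at the two crossing points $(\pm\tfrac{\sqrt3}{2},0)$ of ${\bf L}$ and ${\bf C}$: one must confirm that near these points $V_{3,4,3,-2}$ genuinely looks like two arcs crossing (four local half-branches) rather than something worse, so that the description as ${\bf I}_0$ passing through while ${\bf I}_\pm$ terminate there is correct. This reduces to checking that all four half-branches of $Z$ at each crossing lie in $Z_0$ with $|\tau|<1$ up to and including the crossing point --- which holds because $|xy|/\sqrt{(1-x^2)(1-y^2)}$ stays $<1$ on a full neighborhood of $(\pm\tfrac{\sqrt3}{2},0)$ in the square --- so that the fiber is a single point throughout and the bijection $V_{3,4,3,-2}\to Z\cap\{|x|\le 1,|y|\le 1\}$ is a homeomorphism there as well. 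The two sign disambiguations ($\theta=-2\gamma$ over $4\gamma$ on ${\bf I}_0$, and $\pi+4\gamma$ over $\pi-2\gamma$ on ${\bf I}_\pm$) are then routine once one evaluates at $x=0$.
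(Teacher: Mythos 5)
Your proposal is correct and follows essentially the same route as the paper: compute $p_{3,4,3,-2}(x,y)=y(4x^2+4y^2-3)$, identify the segment $y=0$ and the circle $4x^2+4y^2=3$ inside the unit square, check that the endpoints $(\pm 1,0)$ fall under case (3) of Theorem~\ref{toruspt2} while everything else lies in $Z_0$ with $|\tau|<1$, and then use the formulas for $\cos\gamma$ and $\cos(\gamma-\theta)$ together with an evaluation at $x=0$ to disambiguate the sign and pin down $\theta=-2\gamma$ on ${\bf I}_0$ and $\theta=\pi+4\gamma$ on ${\bf I}_\pm$. The only additions beyond what the paper writes are your explicit note that the bijection to $Z\cap\{|x|\le 1,|y|\le 1\}$ is a homeomorphism by compactness, and your explicit check that the four half-branches at $(\pm\sqrt{3}/2,0)$ all lie in $Z_0$ with $|\tau|<1$; the paper takes both of these as read, so your version is a slightly more careful rendering of the same argument rather than a different one.
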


Figure \ref{fig34} illustrates the   space $V_{3,4,3,-1} $   and its image in the pillowcase $R(S^2,\{a,b,c,d\})$.
  \begin{figure}
\begin{center}
\def\svgwidth{5in}

 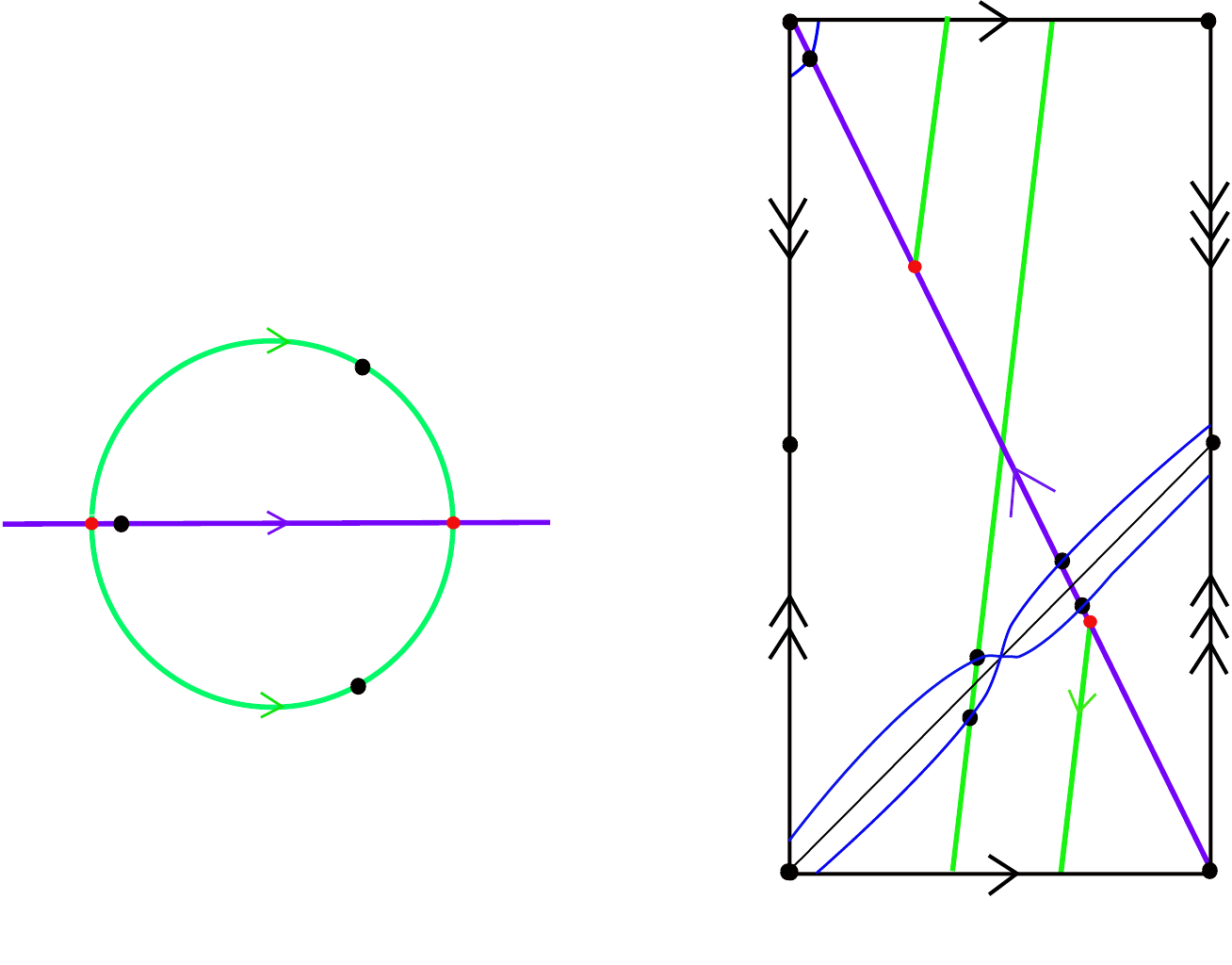

 \caption{The (3,4) torus knot \label{fig34}}
\end{center}
\end{figure} 
Notice that the images of    $\bf I_+$, and $\bf I_-$   in the pillowcase each intersect  the arc $\theta=\gamma$ (transversely) in one interior point,  and $\bf I_0$ intersects $\theta=\gamma$ in two points,  the distinguished representation $\alpha$ at one corner and one interior point. This gives the four traceless representations of the $(3,4)$ torus knot.  Perturbing as above we conclude that the instanton chain complex for $T_{3,4}$ is generated by seven generators, $\alpha', x_1,x_2, y_1,y_2,z_1,z_2$ of the reduced instanton complex.  Note that $x_i$ and $z_i$ are mapped to the same point in the pillowcase.   The representation labelled $y$ lies on ${\bf I}_0$, hence by Proposition \ref{crosssect} corresponds to a binary dihedral representation.

The perturbation illustrated by the blue circle in Figure \ref{fig34} corresponds to a small choice of perturbation parameter $\epsilon$.  Increasing $\epsilon$ eventually moves the intersection point labelled $y_2$ past the singular point where $\bf I_+$ and $\bf I_-$  bifurcate from $\bf I_0$.  This also creates two new intersection points, one each on $\bf I_+$ and $\bf I_-$.  This presumably corresponds to an elementary expansion of the reduced instanton complex.

The  reduced Khovanov homology of the $(3,4)$ torus knot has rank $5$, and since it forms the $E_2$ page of a spectral sequence converging to $I^\nat(S^3,K)$  (see Section \ref{data}),  one differential in $CI^\nat(S^3, T_{3,4})$ is non-trivial.  Arguments involving the gradings as in \cite{KM-khovanov} suggest that the differential involves the generator coming from the abelian representation.

 \medskip

We briefly list some other calculations.  For the $(4,5)$ torus knot, taking $p=4, q=5, r=4, s=-3$
$$p_{4,5,4,-3}(x,y)=x \left( 16\,{y}^{4}+16\,{x}^{2}{y}^{2}-20\,{y}^{2}-4\,{x}^{2}+3
 \right)
$$ and
$$\tau(x,y)={\frac {xy}{\sqrt {1-{x}^{2}}\sqrt {1-{y}^{2}}}}.$$
The intersection of the zero set of $p_{4,5,4,-3}(x,y)$ with $Z_0\cup Z_1$ is the union of an arc (corresponding to $x=0$) and  a circle which meets the arc in two points. The zero set of $p_{4,5,4,-3}(x,y)$ contains two other components which lie outside the square $|x|\leq 1, |y|\leq 1$ and hence do not contribute to $V_{4,5,4,-3}$, and so $V_{4,5,4,-3}$ is the union of a circle and an arc, and is homeomorphic to $V_{3,4,3,-2}$.

For the $(3,5)$ torus knot, $p_{3,5,2,-1}(x,y)=-8\,{y}^{4}+6\,{y}^{2}-2\,{x}^{2}$ and $V_{3,5,2,-1}$ is a figure 8 curve, i.e. a  circle with one double point.

\medskip

It is not clear from our description whether the image of $R(Y_0,K_0)$ in the pillowcase is always contained in the union of straight lines for all $(p,q)$ torus knots, as is the case for $(p,q)$ equal to  $(2,n)$ and $(3,4)$.  Further calculations reveal that the polynomials $p_{p,q,r,s}$ can be quite complicated and their zero sets highly singular, and it seems unlikely that the restriction to the pillowcase is linear.

  \section{Calculations of $CI^\nat(S^3,K)$}\label{data}

In this section, we use our results  to carry out some calculations of instanton homology. To set the stage, observe that for  a knot in a homology sphere with a simple representation variety, we have arranged that $R^\nat_\pi(Y,K)$ has $2k+1$ points, which we label as $\alpha'$ and $ \beta_{i,1},\beta_{i,2},~i=1,..., k$. Here $\alpha'$ corresponds to  the perturbation of  the distinguished isolated point $\alpha\in R^\nat(Y,K)$  that restricts to the abelian representation $\alpha_{\pi/2}$ on $Y\setminus N(K)$.  For each $i=1,...,k$, the two points $\beta_{i,j}\in R^\nat_\pi(Y,K)$ are those that result from perturbing the circle of non-abelian  traceless representations in $R^\nat(Y,K)$ coming from  $\beta_i\in R(Y,K)$. 

   \subsection{Summary of the results of Kronheimer-Mrowka}  The generators of $CI^\nat(Y,K)$ are the   points of $R^\nat_\pi(Y,K)$ for some generic perturbation data $\pi$.    The differential on $CI^\nat(Y,K)$ is defined  using moduli spaces of singular instantons on the cylinder $Y\times \RR$.  Though we suppress it from the notation, it is important to emphasize that the chain group $CI^\nat(Y,K)$ depends on the choice of perturbation data $\pi$, and that the differential depends on additional perturbation data on the cylinder. Generators of  $CI^\nat(Y,K)$ come equipped with a relative $\ZZ/4$-grading.   This grading is determined by  the spectral flow of the family of Hessians of  the Chern-Simons functional along a path of connections joining a  pair of generators.      A standard argument  shows that   the gradings of $ \beta_{i,1}$ and $\beta_{i,2}$ differ by 1 (after relabeling if necessary).    The relative grading is promoted to an absolute grading by defining  a grading difference associated to paths of connections on a cobordism of pairs from $(Y,K)$ to  $(S^3,U)$, where $U$ is the unknot, and normalizing the  grading of the unique point of $R^\nat(S^3,U)$  \cite[Proposition 4.4]{KM-khovanov}.   One can use a splitting theorem for spectral flow (e.g. \cite{APS, nicolaescu, Daniel-Kirk}) as in \cite{BHKK} to   compute the relative grading between a pair generators that lie on the same path component in $\chi(Y,K)$.  This is implicit in the discussion in \cite[Section 11]{KM-filtrations}.

   In \cite{KM-khovanov}, Kronheimer-Mrowka define a different $\ZZ/4$-graded chain complex for knots in $S^3$
whose homology  is $I^\nat(S^3,K)$.  To distinguish it from $CI^\nat(S^3,K)$ we denote it by $FCI^\nat(S^3,K)$.  This chain complex is filtered, and the associated spectral sequence has $E_2$ page  isomorphic to the reduced Khovanov homology of the mirror image $K^m$ of $K$, $E_2\cong Kh^{red}(K^m)$. The spectral sequence is $\ZZ/4$-graded, and the bigrading on Khovanov homology determines the mod 4 grading on the $E_2$ page. Explicitly, a generator of $Kh^{red}_{i,j}(K^m)$ with quantum grading $i$ and homological grading $j$ inherits the  grading $i-j+1\mod{4}$ in the $E_2$ page (Section 8.1 of \cite{KM-khovanov}).  Note that in that reference it is shown  that a generator of {\em unreduced} Khovanov homology in bigrading $(i,j)$  determines a generator of unreduced instanton homology in bigrading $i-j-1$; a shift of 2 occurs when passing to reduced homology.

 The chain complex $FCI^\nat(S^3,K)$ is built from the hypercube of complete unoriented resolutions of a diagram of $K$.  Its construction relies on the fact that the singular instanton homology groups of knots which differ by the unoriented skein relation fit into an exact triangle.  In general, this means that the rank of  $FCI^\nat(S^3,K)$ will be exponentially greater than the rank of any complex $CI^\nat(S^3,K)$ obtained by a non-degenerate perturbation of the Chern-Simons functional.  These considerations also show that for each $i\in \ZZ/4$,
\begin{equation}\label{ineq1}
\rank FCI_i^\nat(S^3,K)\ge \rank Kh^{red}_i(K^m)\ge \rank I^\nat(S^3,K)_i.
\end{equation}
Since the homology of $CI^\nat(S^3,K)$ equals $I^\nat(S^3,K)$, we have the obvious inequality   
 \begin{equation}\label{ineq2}
 \rank CI_i^\nat(S^3,K)\ge   \rank I^\nat(S^3,K)_i, \ \ \text{for each}\  i\in \ZZ/4.
\end{equation}

Kronheimer and Mrowka use excision for instanton homology to show that  $I^\nat(S^3,K)$ is isomorphic,  as a $\ZZ/4$ graded group, to the sutured instanton Floer homology $KHI(K)$ \cite[Proposition 1.4]{KM-khovanov}. This latter invariant was defined by Floer in \cite{Floer} and revisited in  \cite[Section 7]{KMsuture}. It has the advantage of possessing an additional $\ZZ$ grading, and  the graded Euler characteristic with respect to this grading equals the Alexander polynomial \cite{KMalex, Lim}.  It follows that if $\Delta_K(t)=\sum_i a_it^i$ denotes the Alexander polynomial, then 
\begin{equation}\label{ineq3}
\rank I^\nat(S^3,K)\ge \sum_i |a_i|\ge |\sum a_i(-1)^i|=|\det(K)|.
\end{equation}

\noindent On the other hand, the Euler characteristic of Khovanov homology equals the Jones polynomial \cite{Khovanov}
$$J_K(q)=\sum_{i,j} (-1)^j q^{-i} \rank Kh^\red_{i,j}(K).$$
Thus, if we let $J_K(q)=\sum_ib_iq^i$, then we have
\begin{equation}\label{ineq4}
 \rank Kh^{red}(K)\ge \sum_i |b_i|\ge |J_K(-1)|=|\det(K)|.
\end{equation}
These inequalities are obviously useful for understanding the behavior  of Kronheimer and Mrowka's spectral sequence.  For instance, they immediately show that spectral sequence collapses for all 2-bridge knots. This is because     $|\det(K)|$ equals   the rank of $Kh^\red(K)$ for   2-bridge knots.  More generally, the spectral sequence collapses for the same reason for all alternating and  quasi-alternating knots \cite{Khovanov,Lee,MO}. Hence Equations (\ref{ineq1}), (\ref{ineq3}), and (\ref{ineq4}) imply that for these knots, $I^\nat(S^3,K)=Kh^\red(K^m)$. 

\medskip

Similar facts hold for the unreduced theory $I^\sharp(S^3,K)$. In particular the corresponding spectral sequence has $E_2$ term the unreduced Khovanov homology of $K$ with its bigrading appropriately reduced to a mod 4 grading. Thus the calculations we give below can be modified to handle the case of unreduced Khovanov homology and $I^\sharp(S^3,K)$. The corresponding chain complex $CI^\sharp(S^3,K)$ is generated by $R^\sharp_\pi(S^3,K)$, which, using the perturbations described in Theorems \ref{pertvar} and \ref{pertvar2}, has twice as many points as $R_\pi^\nat(S^3,K)$.

\subsection{Remarks on calculations}

The calculations of reduced Khovanov homology we give below were obtained using Dror Bar-Natans' Knot theory Mathematica workbook \cite{BN-Math}.  
 We work over $\QQ$ for the remainder of the article. 
 
 \medskip
 
Use the notation $(a,b,c,d)$ for the $\ZZ/4$ graded vector space $(\QQ^a,\QQ^b, \QQ^c,\QQ^d)$, so, e.g. the rank in grading $2$ is equal to $c$ and in grading 3 is equal to $d$.  More generally, let 
$(a,b,c,d)_e$ denote the result of shifting $(a,b,c,d)$ to the right $e$ slots, so 
e.g. $(0,1,2,3)_3=(1,2,3,0)=(\QQ,\QQ^2,\QQ^3, 0)$.

 \subsubsection{2-bridge knots}    Theorem  \ref{2bridgethm} says that if $K(p/q)$ is a 2-bridge knot,  $CI^\nat(S^3,K(p/q)) $ is generated by $|p|=|\det(K)|$ points. Thus for 2-bridge knots, 
$$CI^\nat(S^3,K(p/q)) =I^\nat(S^3,K)=Kh^\red(K^m)$$
and all differentials in $CI^\nat(S^3,K(p/q)) $ are zero, as are all higher differentials in the spectral sequence from $Kh^\red(K^m)$ to $I^\nat(S^3,K)$.  While these facts could be deduced without the use of Theorem \ref{2bridgethm}, it would be interesting to use these examples to 
 investigate what grading and differential information can be gleaned from   the  intersection diagram (\ref{SVKD}). 

 For example, Figures \ref{fig12}, 
 \ref{fig14},  and \ref{fig72} illustrate the  Trefoil $=K(-3/1)$, with $Kh^\red(K^m)=I^\nat(K)=(1,0,1,1)$, the $(2,5)$ torus knot $=K(-5,1)$, with $Kh^\red(K^m)=I^\nat(K)=(2,1,1,1)$, the Figure 8 knot $=K(-5/3)$, with $Kh^\red(K^m)=I^\nat(K)=(1,1,2,1 )$,  and the knot $7_2=K(-11/5)$, with $Kh^\red(K^m)=I^\nat(K)=(3,2,3,3 )$.

\subsubsection{Knots with simple representation varieties}

We illustrate one sample calculation; all the data below was  obtained by the same method, which is the method described in \cite{KM-filtrations}. 

\subsubsection{The $(3,4)$ torus knot.} 
  The bigraded reduced Khovanov homology has Poincar{\'e} polynomial $Kh^{red}(T_{3,4}^m)=q^{-7}+ q^{-17}t^{-5} + q^{-13}t^{-4} + q^{-13}t^{-3} + q^{-11}t^{-2}$.  This means that there is a generator of homology in bidegree $(-7,0)$, one in $(-17,-5)$ etc.  The induced mod 4 graded group has a generator in degree $-7-0+1=-6\equiv 2\mod 4$, one in degree $-17-(-5)+1\equiv 1$, etc.  Thus the $\ZZ/4$ graded reduced Khovanov homology has ranks equalling $(2,1,1,1)$.

The character variety $\chi(S^3, T_{3,4})$ is illustrated in Figure \ref{fig21}, and is determined by the data (see Section \ref{toruschi}) 
$$(1,7), (5,11), (2,10).$$
There are four points in $R(S^3,T_{3,4})$, labelled $\alpha, x,y,z$ in the figure.
 \begin{figure}
\begin{center}

\def\svgwidth{4.7in}

 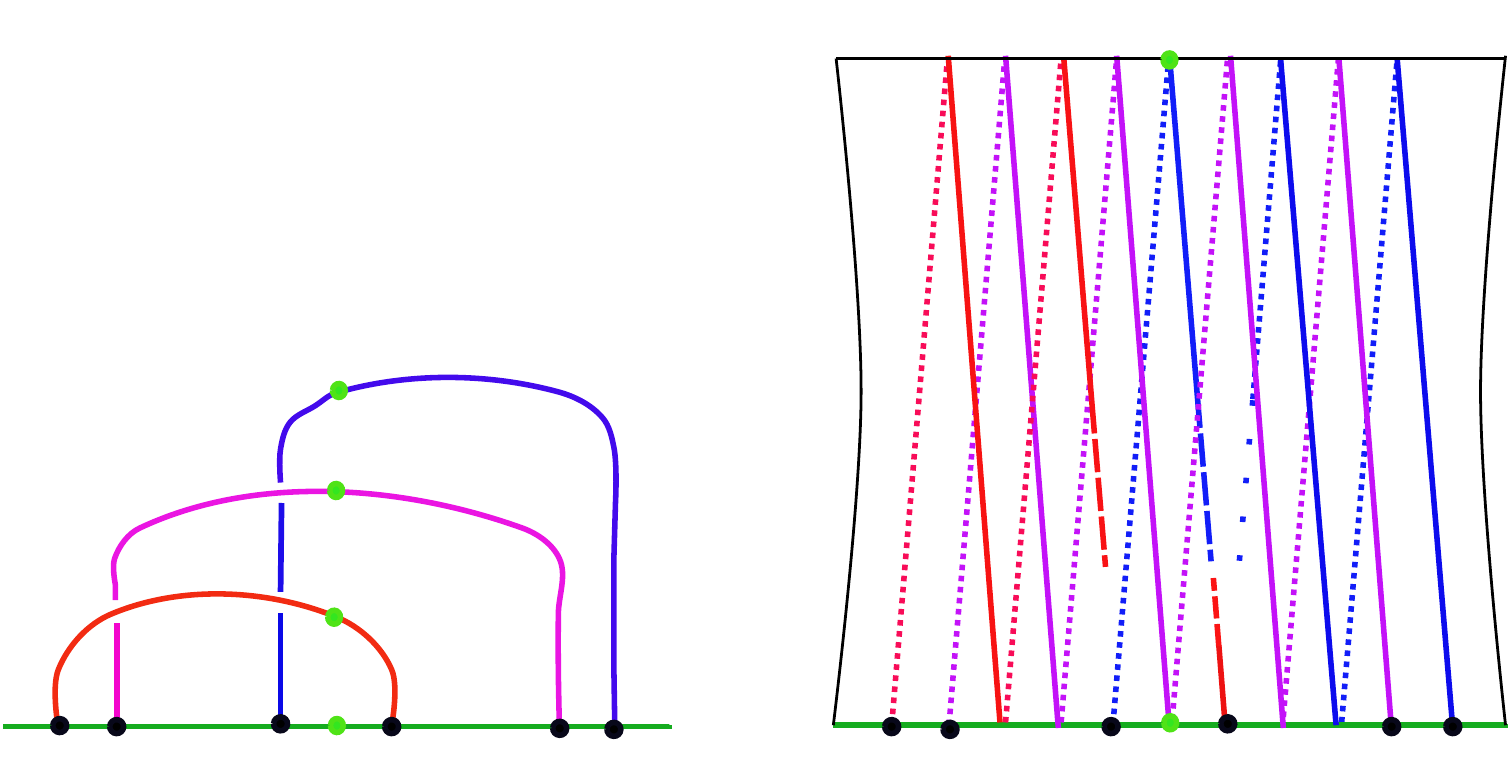

 \caption{$\chi(S^3, T_{3,4})$ and its image in the pillowcase $\chi(\partial N(T_{3,4}))$ \label{fig21}}
\end{center}
\end{figure} 
In terms of $R(S^3,T_{3,4})$ and its image in the  other pillowcase $
R(S^2,\{a,b,c,d\})$,   Proposition \ref{crosssect} shows that representations which lie on ${\bf I}_0$ send $N^q$ to $1$, since $q=4$.  Thus the two traceless representations labelled $\alpha$ and $y$ in Figure \ref{fig21} correspond to the two intersections of $\{\theta=\gamma\}$ with the arc ${\bf I}_0$ in Figure \ref{fig34}: the intersection at the corner is $\alpha$ and the non-abelian intersection in the interior of ${\bf I}_0$ is $y$. The remaining two points, $x$ and $z$ lie on  the intersection of ${\bf I}_+$ and ${\bf I}_-$ with $\{\theta=\gamma\}$.

From Theorem \ref{pertvar} and Corollary \ref{corimage} we see that $x,y$ and $z$ each perturb to  give two generators for $CI^\nat(S^3,T_{3,4})$, which we label $\{x_1, x_2,y_1,y_2,z_1,z_2\}$.  These arise by perturbing a Morse-Bott critical circle, and hence the gradings satisfy $gr(x_1)-gr(x_2)=
gr(y_1)-gr(y_2)=gr(z_1)-gr(z_2)=1$ (after perhaps reindexing).

The last generator $\alpha'$ (the perturbation of $\alpha$) contributes $(1,0,0,0)_a$ to $CI^\nat(S^3,T_{3,4})$.
Thus we see that $CI^\nat(S^3,T_{3,4})$ is a direct sum of graded groups
$$CI^\nat(S^3,T_{3,4})= (1,0,0,0)_a\oplus (1,1,0,0)_{b(x)}\oplus (1,1,0,0)_{b(y)} \oplus (1,1,0,0)_{b(z)}.$$
Using splitting theorems for spectral flow, such as those of \cite{BHKK} (see also the discussion in \cite{KM-filtrations}) one can see from Figure \ref{fig21} that
$b(y)= b(x)+2$ and $b(z)=b(x)+4\equiv b(x)$. Thus  $$CI^\natural(S^3,T_{3,4})=(1,0,0,0)_a\oplus (2,2,1,1)_{b},$$ where $b=b(x)$.  The Alexander polynomial $\Delta_K(t)=t^3+\frac{1}{t^3}-t^2-\frac{1}{t^2}+1$. The sum of the absolute value of its coefficients tells us the rank of $I^\nat(S^3,T_{3,4})$ is at least $5$,  and hence $I^\natural(S^3,T_{3,4})=Kh^{red}(T_{3,4}^m)=(2,1,1,1)$.  Thus the rank of the differential on the complex $CI^\nat(S^3,T_{3,4})$ is one, and the spectral sequence collapses after the $E_2$ page.
 
In this example,  the representation labeled $y$ is  the only non-abelian  binary dihedral representations, and hence corresponds to the representation $y$ of Figure \ref{fig34}.

\subsection{The $(2,n)$ torus knots}

For the $(2,2k+1)$ torus knots, there are $k$ non-abelian arcs in $\chi(S^3,T_{2,2k+1})$, and these are nested. Each arc contains a traceless representation, and the signature of $T_{2,2k+1}$ equals $-2k$.

Thus  $R^\nat_\pi(S^3,K)$ consists of $2k+1$ points which correspondingly generate $CI^\nat(S^3,T_{2,2k+1})$.  Spectral flow considerations show that since the non-abelian arcs in $\chi(S^3,T_{2,2k+1})$ are nested, the grading difference from $\alpha$ to each non-abelian point in $R(S^3,K)$ is  successively $b,b+2,b+4,\cdots$ mod 4 for some integer $b$.  Hence 
\begin{eqnarray*}
I^\nat(S^3,K)=CI^\nat(S^3,T_{2,2k+1})&=&(1,0,0,0)_{a(k)}\oplus_{j=1}^{k} (1,1,0,0)_{b+2j}\\
&=&(1,0,0,0)_{a(k)}\oplus 
 \begin{cases} (\frac{k+1}{2},\frac{k+1}{2},\frac{k-1}{2},\frac{k-1}{2})_{b(k)}& \text{if $k$ is odd,}\\
 (\frac{k}{2},\frac{k}{2},\frac{k}{2},\frac{k}{2})
 & \text{if $k$ is even.}
\end{cases}
\end{eqnarray*}
where $a(k)$ denotes the grading of the generator $\alpha'$, and $b(k)$ is some integer.

On the other hand, one can easily compute $Kh^\red(T_{2,2k+1}^m)$ (e.g., by the method of  \cite[Section 6.2]{KhovanovI})  to find: 

$$Kh^\red(T_{2,2k+1}^m)= (1,0,0,0)\oplus
\begin{cases} (\frac{k-1}{2},\frac{k-1}{2},\frac{k+1}{2},\frac{k+1}{2})& \text{if $k$ is odd,}\\
 (\frac{k}{2},\frac{k}{2},\frac{k}{2},\frac{k}{2})
 & \text{if $k$ is even.}
\end{cases}
$$
\noindent One sees that these values are consistent with the possibility  that $a(k)= \sigma(T_{2,2k+1}) \mod 4$ and  $b(k)=3$ for all $k$.

\subsection{The $(3,n)$ torus knots}

For the $(3,n)$ torus knots with $n\leq 38$, we have

$$CI^\nat(S^3, T_{3, n})=(1,0,0,0)_{a(n)} \oplus
\begin{cases} (2k,2k,2k,2k)& \text{ if~} n=6k+1\\
(2k+1,2k+1,2k,2k)_{b(n)}& \text{ if~} n=6k+2\\
(2k+2,2k+2,2k+1,2k+1)_{b(n)}& \text{ if~} n=6k+4\\
(2k+2,2k+2,2k+2,2k+2)& \text{ if~} n=6k+5
\end{cases}
$$
and
$$Kh^\red( T_{3, n}^m)= 
\begin{cases} (2k+1,2k,2k,2k)& \text{ if~} n=6k+1\\
(2k+1,2k,2k+1,2k+1)& \text{ if~} n=6k+2\\
(2k+2,2k+1,2k+1,2k+1)& \text{ if~} n=6k+4\\
(2k+2,2k+1,2k+2,2k+2)& \text{ if~} n=6k+5.
\end{cases}
$$
For these knots, the sum of the absolute values of the coefficients of the Alexander polynomial $ |\Delta|$ equals the rank of the reduced Khovanov homology. The absolute value of the signature $|\sigma|$ satisfies

$$|\sigma(T_{3,n})|+1=\begin{cases}

 |\Delta(T_{3,n})|&\text{ if } n=6k+1, 6k+2,\\
|\Delta(T_{3,n})|+2&\text{ if } n=6k+4, 6k+5.

\end{cases}$$
Thus $Kh^\red(T_{3,n}^m)=I^\nat(S^3, K_{3,n})$, and when $n=6k+1, 6k+2$, there are no non-zero differentials in the complex $CI^\nat(S^3, T_{3,n})$. When $n=6k+4, 6k+5$, the rank of the differential is one.  In either case there are no non-zero differentials after the $E_2$ page in the Kronheimer-Mrowka spectral sequence.  The data is consistent with the guess $a(n)=\sigma(T_{3,n})\mod 4$ and $b(n)=3$.  The unreduced Khovanov homology is known for all $(3,n)$ torus knots \cite{Turner}. Presumably similar calculations could verify the   formulae above for all $n$.

 \subsection{Other torus knots}  
 Patterns for more complicated torus knots are not as obvious. But  for fixed $m$, the rank $|\sigma|+1$ of $CI^\nat(S^3, T_{m,n})$  and the lower bound $ |\Delta|$  grow linearly in $n$, whereas the rank of $Kh^\red(T_{m,n})$ seems to be growing more quickly for $m\ge 4$. 

 The computations of $CI^\nat$  for torus knots are done by computing their data in the sense of Section \ref{toruschi}, using the MAPLE computer algebra package.  Some examples were listed in Section \ref{toruschi}. The  table below includes the signature  $\sigma$ and the  
sum of the absolute values of the coefficients of the Alexander polynomial $|\Delta|$. We write $A_a$ for $(1,0,0,0)_a$. Whenever    $CI^\nat(K)$ and  $I^\nat(K)$ differ the chain complex $CI^\nat(K)$ has non-trivial differential.
Whenever    $Kh^\red(K)$ and  $I^\nat(K)$ differ the Kronheimer-Mrowka spectral sequence has non-trivial higher differentials.

 \medskip

\medskip

A  torus knot which exhibits interesting $CI^\nat$ is the $(4,5)$ torus knot, with  $(c_i,d_i)$ equal to 
$$(1,9), (7,17), (2,18), (6,14), (11,19), (3,13).$$
The first and fourth arc do not contain traceless representations, hence do not contribute to  $CI^\nat(S^3, T_{4,5})$. The remaining four arcs contain traceless representations, hence $CI^\nat(S^3, T_{4,5})$ is generated by $8+1=9 $ elements.  
As explained in \cite{KM-filtrations}, this example is remarkable because although the Khovanov homology also has 9 generators, the gradings ($CI^\nat=(3,2,2,2)_a$  and $Kh^\red=(2,1,3,3)$) are incompatible with a spectral sequence with no higher differentials. Since $|\Delta|=7$, it follows that the differential on $CI^\nat(S^3, T_{4,5})$ has rank one.   As mentioned in \cite{KM-filtrations}, this is not quite enough to compute $I^\nat(S^3,T_{4,5})$, as both $(2,1,2,2)$ and $(1,1,3,2)$  are compatible.  In \cite{KM-filtrations} they establish that $I^\nat(S^3,T_{4,5})=(2,1,2,2)$ using  the results of  \cite{KMalex,Lim} which identify the coefficients of the Alexander polynomial with 
a kind of Euler characteristic associated to the  generalized eigenspaces of an operator $\mu:KHI(K)\to KHI(K)$.   If, as seems likely, the grading of the generator $\alpha$ occurs in degree $\sigma(K) \mod 4$ (see below) then this would also show that 
$I^\nat(S^3,T_{4,5})=(2,1,2,2)$.

In the spectral sequence for $T_{4,9}$, the rank from the $E_2$ page to the limit drops by at least $8$. For $T_{5,7}$, all differentials in $CI^\nat(S^3,T_{5,7})$ are zero, but in the spectral sequence, the rank drops from 29 for the $E_2$ page  to 17 in the limit. 

For $T_{5,6}$, the ranks of $CI^\nat(S^3, T_{5,6})$ and $Kh^\red(T_{5,6}^m)$ are equal, but the gradings are different so that the spectral sequence necessarily has higher differentials, and hence  $CI^\nat(S^3, T_{5,6})$ also has non-trivial differentials. For this knot, the knot Floer homology groups have rank equal to 9,  and hence further examination of this knot may shed light on the relationship between the three knot invariants.

For the larger values of $(p,q)$ in the table, examples were chosen so that the Alexander polynomial sufficed to conclude that $CI^\nat(S^3,K)$ has no higher differentials. This is because $|\sigma(K)|+1$ equals $|\Delta|$.
 
 \medskip
  
 \begin{tabular}{ |c|c|c|c|c|c|c| }
  \hline                        
  Torus knot & $\sigma $ &  $ |  \Delta | $ & $CI^\nat(K)$ &$ Kh^\red(K^m)$& $I^\nat(K)$ \\
    \hline 
    (4,5)&-8 &7 &$ (3,2,2,2)_a $ &$(2,1,3,3)$&$ $\\   
  (4,7)&-14 &11 &$A_a\oplus(4,4,3,3)_b$, &$(4,4,5,4 )$& $11\leq $rank $\leq15$  \\
  (4,9)&-16 &13 &$ (5,4,4,4)_a $&$(7,6,6,6)$& $13\leq $rank $\leq17$\\ 
  (4,11)&-22 &17 &$A_a\oplus (6,6,5,5)_b$ &$(10,9,9,9 )$& $17\leq $rank $\leq23$\\   
   (4,13)&-24 &19 & $(7,6,6,6)_a  $ &$(12,11,13,13)$& $19\leq \rank \leq 25$\\

 (4,15)&-30&23&$A_a\oplus(8,8,7,7)_b$& $(16,16,17,16)$& $23\leq\rank\leq 31$\\
   (4,17)&-32&25&$A_a\oplus(8,8,8,8)$& $(21,20,20,20)$& $25\leq\rank\leq 33$\\
  (4,19)&-38&29&$A_a\oplus(10,10,9,9)_b$& $(26,25,25,25)$& $29\leq\rank\leq 39$\\
 (4,21)&-40&31&$A_a\oplus(10,10,10,10)$& $(30,29,31,31)$& $31\leq\rank\leq 41$\\
 (4,23)&-46&35&$A_a\oplus(12,12,11,11)_b$& $(36,36,37,36)$& $35\leq\rank\leq 47$\\
 (4,25)&-48&37&$A_a\oplus(12,12,12,12)$& $(43,42,43,42)$& $37\leq\rank\leq 49$\\

   (5,6)&-16 &9 &$ (5,4,4,4)_a  $  &$(5,3,4,5)$& $9\leq $rank $\leq15$\\
   (5,7)&-16 &17 &$ (5,4,4,4)_a  $ &$(8,6,7,8)$& $(5,4,4,4)_a $\\
   (5,8)&-20 &19 &$(6,5,5,5)_a   $ &$(9,8,9,9)$& $19\leq $rank $\leq21$\\
    (5,9)&-24 &15 &$(7,6,6,6)_a   $ &$(10,10,11,10)$& $15\leq $rank $\leq25$\\
     (5,11)&-24 &17 &$(7,6,6,6)_a   $ &$(15,14,14,14)$& $17\leq $rank $\leq25$\\
   (5,12)&-28 &29 &$(8,7,7,7)_a   $ &$(20,19,19,19)$& $(8,7,7,7)_a $\\
    (5,17)&-40 &41 &$(11,10,10,10)_a   $ &$(38,36,37,38)$& $(11,10,10,10)_a $\\
    (5,22)&-52 &53 &$(14,13,13,13)_a   $ &$(62,61,61,61)$& $(14,13,13,13)_a $\\
     (5,117)&-280&281&$ (71,70,70,70)_a$ & ?& $(71,70,70,70)_a$  \\
    (6,7)&-18 & 11 &$A_a\oplus(5,5,4,4)_b   $ &$(7,7,9,8)$& $11\leq$rank$\leq19$\\
    (7,16)&-54 & 55 &$A_a\oplus(14,14,13,13)_b   $ &$?$&$A_a\oplus(14,14,13,13)_b$ \\
    (7,30)&-102 & 103 &$A_a\oplus(26,26,25,25)_b   $ &$?$&$A_a\oplus(26,26,25,25)_b   $ \\
     (9,11)&-48 & 49 &$(13,12,12,12)_a   $ &$?$&$(13,12,12,12)_a   $ \\
     (9,25)&-112 & 111 &$(29,28,28,28)_a  $ &$?$&   $111\leq $rank $\leq 113$ \\
      (9,29)&-128 & 129 &$ (33,32,32,32)_a  $ &$?$&$ (33,32,32,32)_a $ \\
(11,24)&-130 & 131 &$ A_a\oplus(33,33,32,32)_b  $ &$?$&$ A_a\oplus(33,33,32,32)_b  $ \\
(11,31)&-168 & 169 &$(43,42,42,42)_a   $ &$?$&$(43,42,42,42)_a   $ \\
(13,15)&-96 & 97 &$(25,24,24,24)_a   $ &$?$&$(25,24,24,24)_a$ \\
(13,28)&-180 & 181 &$(46,45,45,45)_a   $ &$?$&$(46,45,45,45)_a$ \\
\hline
\end{tabular}

 \medskip
 
 \subsection{Speculation}  The data calculated above is consistent with the conjecture that the generator $\alpha$ of $CI^\nat(K)$ has grading equal to the signature $\sigma(K)$ mod 4.   This is likely true, and a proof should follow from the following outline.  Push a Seifert surface for $K$ into the 4-ball and ambiently surger half a symplectic basis. This yields a 4-manifold $X$ with boundary in which $K$ bounds a disk $D$, such that $\alpha$ extends to $\pi_1(X_0)$  where $X_0=X\setminus N(D)$.   The signature of $(X_0,\alpha)$ gives the signature of $K$. 
 Now glue in $(S^1\times D^2)\times [0,1]$, containing the surfaces $ (S^1\sqcup H)\times[0,1]$   together with the singular bundle data over $W\times[0,1]$ (i.e. cross Figure \ref{natural} with an interval  and glue it to $X_0$ along an annulus in $D$ times an interval). This gives a flat singular cobordism from $(S^3,K)$ to $(S^3,U)$, where $U$ denotes the unknot. The Fredholm index of this flat cobordism (in the sense of \cite[Proposition 4.4]{KM-khovanov}) should be the signature of $K$ by an excision argument.   
 
 The data is  also consistent with the conjecture that for any torus knot, the chain complex $CI^\nat(K)$ has at most one non-trivial differential or, equivalently, that rank$(I^\nat(K))=|\sigma(K)|\pm 1$.

 Looking at the table above, one could also  conjecture that $b=3=-1\mod 4$. This corresponds  to a spectral flow along an arc of flat connections in $\chi(S^3,K)$  starting at the abelian flat connection $\alpha$ traveling towards the trivial connection (i.e. $\alpha_t$ as $t$ decreases from $t=\frac{1}{2}$ in the notation of Definition \ref{SRV}) and changing branches into  the first irreducible  arc encountered.  For general knots  with simple representation varieties $ b$ might be $\mp 1$, according  to the sign of the change in  Levine-Tristram signatures; for torus knots this sign is always positive (see \cite{Herald2}).

Much more ambitiously, one could hope that an Atiyah-Floer conjecture holds in this context, which would describe how to calculate gradings and differentials from  intersection diagrams in the  pillowcase (and perhaps additional data internal to $R(Y_0,K_0)$).  Exploring this topic provides motivation for the problem of describing the spaces $R(Y_0,K_0)$ and their image in the pillowcase for more general   tangles than those that arise from 2-bridge and torus knots.  
It would be interesting to relate our examples and calculations to the approach of Wehrheim and Woodward as alluded to in \cite{WW}.

 \end{document}